\newtheorem{proposition}{Proposition}[section]
\newtheorem{lemma}[proposition]{Lemma}
\newtheorem{corollary}[proposition]{Corollary}
\newtheorem{theorem}[proposition]{Theorem}
\theoremstyle{definition}
\newtheorem{definition}[proposition]{Definition}
\newtheorem{example}[proposition]{Example}
\theoremstyle{remark}
\newtheorem{remark}[proposition]{Remark}
\newcommand{\thlabel}[1]{\label{th:#1}}
\newcommand{\thref}[1]{Theorem~\ref{th:#1}}
\newcommand{\selabel}[1]{\label{se:#1}}
\newcommand{\seref}[1]{Section~\ref{se:#1}}
\newcommand{\lelabel}[1]{\label{le:#1}}
\newcommand{\leref}[1]{Lemma~\ref{le:#1}}
\newcommand{\prlabel}[1]{\label{pr:#1}}
\newcommand{\prref}[1]{Proposition~\ref{pr:#1}}
\newcommand{\colabel}[1]{\label{co:#1}}
\newcommand{\coref}[1]{Corollary~\ref{co:#1}}
\newcommand{\relabel}[1]{\label{re:#1}}
\newcommand{\reref}[1]{Remark~\ref{re:#1}}
\newcommand{\exlabel}[1]{\label{ex:#1}}
\newcommand{\exref}[1]{Example~\ref{ex:#1}}
\newcommand{\delabel}[1]{\label{de:#1}}
\newcommand{\eqlabel}[1]{\label{eq:#1}}
\newcommand{\equref}[1]{(\ref{eq:#1})}
\def\equal#1{\smash{\mathop{=}\limits^{#1}}}
\newcommand{\can}{{\rm can}}
\newcommand{\Pic}{{\rm Pic}}
\newcommand{\Hom}{{\rm Hom}}
\newcommand{\HOM}{{\rm HOM}}
\newcommand{\End}{{\rm End}}
\newcommand{\Alg}{{\rm Alg}}
\newcommand{\END}{{\rm END}}
\newcommand{\Ker}{{\rm Ker}\,}
\newcommand{\im}{{\rm Im}\,}
\newcommand{\Gal}{{\rm Gal}}
\def\lan{\langle}
\def\ran{\rangle}
\def\ot{\otimes}
\def\sq{\square}
\def\CC{{\mathbb C}}
\def\QQ{{\mathbb Q}}
\newcommand{\Cc}{\mathcal{C}}
\newcommand{\Dd}{\mathcal{D}}
\newcommand{\Mm}{\mathcal{M}}
\def\*C{{}^*\hspace*{-1pt}{\Cc}}
\def\text#1{{\rm {\rm #1}}}
\def\Morita{\dul{\rm Morita}}
\def\ol{\overline}
\def\ul{\underline}
\def\dul#1{\underline{\underline{#1}}}
\def\mapright#1{\smash{\mathop{\longrightarrow}\limits^{#1}}}
\begin{document}

\title[$H$-Picard groups]{Hopf-Galois extensions and an exact sequence for $H$-Picard groups}

\author[S. Caenepeel]{Stefaan Caenepeel}
\address{Faculty of Engineering,
Vrije Universiteit Brussel, Pleinlaan 2, B-1050 Brussels, Belgium}
\email{scaenepe@vub.ac.be}
\urladdr{http://homepages.vub.ac.be/\~{}scaenepe/}

\author[A. Marcus]{Andrei Marcus}
\address{Faculty of Mathematics and Computer Science,
Babe\c s-Bolyai University, Str. Mihail Kog\u alniceanu 1,
RO-400084 Cluj-Napoca, Romania} \email{marcus@math.ubbcluj.ro}
\urladdr{http://math.ubbcluj.ro\~{}marcus/}

\subjclass[2000]{16W30, 16D90} \keywords{Hopf-Galois extension,
Morita equivalence, Picard group, cleft extension, Sweedler
cohomology}

\begin{abstract}
Let $H$ be a Hopf algebra, and $A$ an $H$-Galois extension. We
investigate $H$-Morita autoequivalences of $A$, introduce the
concept of $H$-Picard group, and we establish an exact sequence
linking the $H$-Picard group of $A$ and the Picard group of $A^{{\rm
co}H}$.
\end{abstract}

\thanks{This research was supported by the bilateral project BWS04/04 ``New
Techniques in Hopf algebras and graded ring theory" of the Flemish
and Romanian governments and by the research project G.0622.06
``Deformation quantization methods for algebras and categories
with applications to quantum mechanics" from FWO-Vlaanderen. The
second  author acknowledges the support of a Bolyai Fellowship of
the Hungarian Academy of Science and of the Romanian
PN-II-IDEI-PCE-2007-1 project,  code   ID\_532, contract no.
29/28.09.2007}

\maketitle

\section{Introduction}

The aim of this paper is the following generalization, presented
in \seref{6} below, of the main result of M. Beattie and A. del
R\'\i o \cite{BR2} (see also \cite{Marcus2} for an approach based
on \cite{Marcus}).

\begin{theorem}\thlabel{t:main}  Assume that $H$ is a cocommutative Hopf algebra over the field $k$.
Let $A$ be a faithfully flat $H$-Galois
extension. There is an exact sequence
$$1\to H^1(H, Z(A^{{\rm co}H}))\overset{g_1}\to\Pic^H(A)
\overset{g_2}\to\Pic(A^{{\rm co}H})^H \overset{g_3}\to H^2(H,
Z(A^{{\rm co}H})).$$
\end{theorem}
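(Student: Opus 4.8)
The plan is to build the four-term sequence by hand from the fundamental theorem of faithfully flat Hopf--Galois descent, using the cocycle calculus of Sweedler cohomology to measure the failure of $B$-bimodules to lift to $A$. Write $B:=A^{{\rm co}H}$. I would first assemble the tools: (i) the structure theorem for faithfully flat $H$-Galois extensions, giving an equivalence ${}_A\mathcal{M}^H\simeq{}_B\mathcal{M}$ via coinvariants $(-)^{{\rm co}H}$ and induction $-\otimes_B A$, together with the unit/counit isomorphisms $P\cong P^{{\rm co}H}\otimes_B A$ and $Q\cong(Q\otimes_B A)^{{\rm co}H}$; (ii) its bimodule refinement, under which an invertible $H$-equivariant $A$-bimodule $P$ has invertible coinvariants $P^{{\rm co}H}\in\Pic(B)$; (iii) the Miyashita--Ulbrich action, which turns $Z(B)$ into a commutative $H$-module algebra and determines both the $H$-action on $\Pic(B)$ and the Sweedler cohomology $H^*(H,Z(B))$ computed from convolution-invertible cochains $H^{\otimes n}\to Z(B)$. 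Here cocommutativity of $H$ is what makes these cochains behave like group cochains and keeps the relevant obstructions central.

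Next I would define the three maps. For $g_2$ I set $g_2([P])=[P^{{\rm co}H}]$; the content is that the global $H$-comodule structure on $P$ forces $P^{{\rm co}H}$ to be fixed by the $H$-action on $\Pic(B)$, so the image lands in $\Pic(B)^H$. For $g_1$, given a convolution-invertible $1$-cocycle $u\in\mathrm{Reg}(H,Z(B))$ I twist the comodule algebra $A$ by $u$ to produce an $H$-equivariant invertible $A$-bimodule ${}_uA$ (a cleft-type construction: the same underlying bimodule, with coaction or right action deformed by $u$); one checks ${}_uA\otimes_A{}_vA\cong{}_{u*v}A$ and that a coboundary yields $A$, so $g_1([u])=[{}_uA]$ is a well-defined homomorphism. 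For $g_3$, starting from $[Q]\in\Pic(B)^H$ I induce $P:=Q\otimes_B A$ and use the $H$-invariance of $[Q]$ to choose isomorphisms implementing a candidate comodule structure on $P$; the failure of these to satisfy coassociativity is recorded by a convolution-invertible $2$-cochain whose coboundary vanishes, giving a class $g_3([Q])\in H^2(H,Z(B))$.

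Then I would verify exactness at the three spots. Injectivity of $g_1$ and $\ker g_2=\im g_1$ follow from the structure theorem: $P^{{\rm co}H}$ is trivial in $\Pic(B)$ exactly when $P\cong A$ as $A$-bimodules, and the remaining freedom in the $H$-structure on such a $P$ is precisely a $1$-cocycle, trivial iff $P\cong{}_uA$ with $u$ a coboundary. For $\ker g_3=\im g_2$ I would show that $g_3([Q])=0$ means the chosen isomorphisms can be rigidified (after modifying by a $1$-cochain) into an honest coassociative $H$-comodule structure on $P=Q\otimes_B A$, i.e.\ that $Q$ lifts to an object $P$ of the $H$-equivariant Picard group with $g_2([P])=[Q]$; conversely any class in $\im g_2$ lifts by construction, so its obstruction is a coboundary. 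Faithful flatness is used throughout to guarantee that coinvariants and induction are mutually inverse and that $(-)^{{\rm co}H}$ is exact and reflects isomorphisms.

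The main obstacle I expect is the construction and well-definedness of $g_3$ together with exactness at $\Pic(B)^H$: one must turn the $H$-invariance isomorphisms $A\otimes_B Q\cong Q\otimes_B A$ into a genuine $2$-cocycle with values in $Z(B)$, check that different choices alter it only by a coboundary, and prove that vanishing of the class is equivalent to descent of the lifted comodule structure. This is where cocommutativity (to land the obstruction in the \emph{center} $Z(B)$ and to obtain the $2$-cocycle identity) and faithful flatness (to pass between $P$ and $P^{{\rm co}H}$) are both essential; the secondary technical point is setting up the bimodule version of Hopf--Galois descent carefully enough that invertibility transfers cleanly in both directions.
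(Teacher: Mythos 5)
Your overall architecture --- $g_2$ by descent to coinvariants, $g_1$ by twisting $A$ with a Sweedler $1$-cocycle, $g_3$ as the obstruction to rigidifying the lift of an $H$-invariant bimodule --- matches the paper's (and its graded antecedent in Beattie--del R\'{\i}o), and your description of $g_1$ agrees with the explicit twisted actions \equref{6.5.1} and \equref{9.1.1}. But two steps are genuinely missing or would fail. The first is the definition of the middle term of the sequence itself: you appeal to ``the $H$-action on $\Pic(B)$'' determined by Miyashita--Ulbrich, but for a Hopf algebra there is no action of $H$ on the group $\Pic(B)$ of which one could take fixed points --- individual elements $h\in H$ do not give autoequivalences of ${}_B\Mm_B$ (in the graded case each $g\in G$ acts via $A_g\ot_B-\ot_BA_{g^{-1}}$, and it is exactly this that does not Hopf-linearize). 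The paper must instead \emph{define} $H$-stability of $\ul{M}$ as the existence of an isomorphism $A^{\sq e}\ot_{B^e}M\cong M\ot H$ in ${}_B\Mm_B^H$, where $A^{\sq e}=A\sq_HA^{\rm op}$, compatible with the context maps (diagrams (\ref{eq:5.9.1a})--(\ref{eq:5.9.2a})), and then prove that these classes form a subgroup (\thref{5.10}, resting on the monoidality Lemmas \ref{le:5.1}--\ref{le:5.7} for the generalized cotensor product). Without this your $\Pic(B)^H$ is undefined. This is also where cocommutativity really enters: it is what makes $A^{\rm op}$ an $H$-comodule algebra and $A^{\sq e}$ a faithfully flat $H$-Galois extension of $B^e$ (\prref{2.7}) --- not merely what ``keeps the obstruction central''.

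Second, your exactness argument at $\Pic^H(A)$ is incorrect as stated: $P^{{\rm co}H}\cong B$ forces $P\cong A$ only in $\Mm_A^H$, not as an $A$-bimodule, and the residual freedom sits in the left $A$-action on $P$ (kernel representatives are $A$ with a one-sided action twisted as in \equref{9.1.1}, generally nontrivial in $\Pic(A)$), not ``in the $H$-structure on $P\cong A$''. Classifying these one-sided actions is a module-extension problem, and that is the engine your sketch lacks: after the identification $\Pic^H(A)\cong\Pic^{\sq_H}(B)$, lifting $\ul{M}$ amounts to extending the $B^e$-action on $M$ to $A^{\sq e}$, which the Militaru--\c Stefan theorem (\coref{4.13}) controls through the comodule algebra $E={}_{A^{\sq e}}\END(A^{\sq e}\ot_{B^e}M)^{\rm op}$ with $E^{{\rm co}H}\cong Z(B)$: $H$-stability of $M$ makes $E$ cleft (\coref{4.10}), writing $E\cong Z(B)\#_\sigma H$ defines $g_3[\ul{M}]=[\sigma]$, and Propositions \ref{pr:4.4} and \ref{pr:4.11} give $[\sigma]=1$ if and only if an associative $A^{\sq e}$-action on $M$ exists, i.e.\ $\Ker(g_3)=\im(g_2)$, while $\ol{\Omega}_E\cong H^1(H,Z(B))$ (\prref{4.3}, \prref{6.5}) computes $\Ker(g_2)$. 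Your construction of $g_3$ also skips a well-definedness point you would hit immediately: the group $H^2(H,Z(B))$ presupposes a fixed $H$-module algebra structure on $Z(B)$, so the action induced by the choices attached to each $Q$ must be shown to coincide with the Miyashita--Ulbrich action, independently of $Q$ --- this is \prref{6.4}; without it the target of $g_3$ varies with the argument. (A smaller slip: the obstruction measures failure of \emph{associativity} of the candidate action on the induced module, not coassociativity of a coaction.)
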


Here $H^*(H, Z(A^{{\rm co}H}))$ are the Sweedler cohomology groups
(with respect to the Miyashita-Ulbrich action of $H$ on $Z(A^{{\rm
co}H})$), $\Pic(A^{{\rm co}H})^H$ is the group of $H$-invariant
elements of $\Pic(A^{{\rm co}H})$ and $\Pic^H(A)$ is the group of
isomorphism classes of invertible relative Hopf bimodules. We
shall give later more details about these notations. Moreover,
$g_1$ and $g_2$ are group-homomorphisms, while $g_3$ is not.

We give a  proof of the theorem by using the ideas of
\cite{Marcus2} and the  results of \cite{CCMT} and \cite{MiSt},
obtaining in this way an interesting interpretation of the above
theorem in terms of Clifford extendibility to $A$ of $A^{{\rm
co}H}$-modules.

The paper is divided as follows. In \seref{1} we present our
general setting, which involves Hopf-Galois extensions, the
Miyashita-Ulbrich action, and most importantly, the concepts of
$H$-Morita context and $\square_H$-Morita context introduced in
\cite{CCMT}, and their relationship with Hopf subalgebras. The
main result of \seref{2} says that if $H$ is cocommutative and $A$
is a faithfully flat $H$-Galois extension of $B:=A^{{\rm co}H}$,
then the cotensor product $A^{\sq e}:=A\sq_H A^{\rm op}$ is a
faithfully flat Hopf-Galois extension of the enveloping algebra
$B^e:=B\ot B^{\rm op}$. In the first part of \seref{4} we discuss the particular
case when $A$ is a cleft extension of the commutative algebra
$B:=A^{{\rm co}H}$, and especially, the characterization of this
situation in terms in Sweedler's $1$- and $2$-cohomology. This is
needed in the second part of \seref{4}, where we review and adapt to our needs
the results  of Militaru and \c Stefan \cite{MiSt} on Clifford
extendibility of modules. The cleft extension in discussion is the
subalgebra $E:={}_A\END(A\ot_B M)^{\rm op}$ of rational elements
in ${}_A\End(A\ot_B M)^{\rm op}$, where $M$ is an $H$-invariant
$B$-module, and $E^{{\rm co}H}\simeq {}_B\End(M)^{\rm op}$ is
assumed to be commutative. In \seref{5} we introduce the
$H$-Picard group $\Pic^H(A)$ and the
$\square_H$-Picard group ${\Pic}^{\sq_H}(A^{{\rm co}H})$ of
$A^{{\rm co}H}$. It is a consequence of the results of \cite{CCMT}
that the groups ${\Pic}^H(A)$ and ${\Pic}^{\sq_H}(A^{{\rm co}H})$
are isomorphic. In the situation where $H$ is cocommutative, we can
introduce the subgroup $\Pic(A^{{\rm co}H})^H$ of $\Pic(A^{{\rm co}H})$
consisting of $H$-stable elements of $\Pic(A^{{\rm co}H})$ (\seref{5b}).
 The definitions of the maps $g_1$, $g_2$ and
$g_3$, as well as the proof of the main theorem are given in
\seref{6}. The main ingredient here is the application of the
Militaru-Stefan lifting theorem to an $H$-stable invertible
$(B,B)$-bimodule $M$, by considering the cleft extension
$E:={}_{A^{\sq e}}\END(A^{\sq e}\ot_{B^e}M)^{\rm op}$ of $E^{{\rm
co}H}\cong Z(B)$. Note that the action of $H$ on $Z(B)$ coming
from $E$ is the same as the Miyashita-Ulbrich action  coming from
$A$, hence it is independent of $M$. \seref{7} is
concerned with the analysis of the map $g_3$. It turns out that
the action  $\Pic(B)$ on $Z(B)$ induces an action of $\Pic(B)^H$
on $H^n(H,Z(B))$, and  that $g_3$ is an $1$-cocycle of the group
$\Pic(B)^H$ with values in $H^2(H,Z(B))$.\\
The exact sequence describing $\Pic^H(A)$ given in \seref{6} holds
in the case where $H$ is cocommutative; in the general case, we can
still give a description of $\Pic^H(A)$, in the case where the coinvariants
of $A$ coincide with the groundfield, that is, $A$ is an $H$-Galois object.
This is done in \seref{9}, and involves Schauenburg's theory of
bigalois objects.\\
Modules will be unital and left, unless otherwise stated. For
general results on Hopf algebras the reader is referred  to
\cite{CMZ}, \cite{DNR} or \cite{Mont}. For group graded versions
of the topics discussed here we also mention  \cite{BR1} and
\cite{HR}.

\section{Hopf-Galois extensions}\selabel{1}
Throughout this paper, $H$ is a Hopf algebra, with bijective
antipode $S$, over a field $k$. We use the Sweedler notation for
the comultiplication on $H$: $\Delta(h)= h_{(1)}\ot h_{(2)}$.
$\Mm^H$ (respectively ${}^H\Mm$) is the category of right
(respectively left) $H$-comodules. For a right $H$-coaction $\rho$
(respectively a left $H$-coaction $\lambda$) on a $k$-module $M$,
we denote
$$\rho(m)=m_{[0]}\ot m_{[1]}\quad{\rm and}\quad
\lambda(m)=m_{[-1]}\ot m_{[0]}.$$ The submodule of coinvariants
$M^{{\rm co}H}$ of a right (respectively left) $H$-co\-mo\-du\-le
$M$ consists of the elements $m\in M$ satisfying $\rho(m)=m\ot 1$
(respectively
$\lambda(m)=1\ot m$).\\
Let $A$ be a right $H$-comodule algebra. ${}_A\Mm^H$ and $\Mm^H_A$
are the categories of left and right relative Hopf modules, and
${}_A\Mm^H_A$ is the category of relative Hopf bimodules, see \cite{CCMT}.
$B=A^{{\rm co}H}$ will be the subalgebra of coinvariants of $A$.
We have two pairs of adjoint functors $(F_1 =A\ot_{B}-,\
G_1=(-)^{{\rm co}H})$ and $(F_2=-\ot_{B}A,\ G_2=(-)^{{\rm co}H})$
between the categories ${}_{B}\Mm$ and ${}_A\Mm^H$, and between
$\Mm_{B}$ and $\Mm^H_A$. Consider the canonical maps
\begin{align*} \can:\ A&\ot_{B} A\to A\ot H,\quad \can(a\ot_B
b)=ab_{[0]}\ot b_{[1]}; \\
\can':\ A&\ot_{B} A\to A\ot H,\quad  \can'(a\ot_B b)=a_{[0]}b\ot
a_{[1]}.\end{align*} We have the following result, due to H.-J.
Schneider \cite[Theorem I]{Schneider0}.

\begin{theorem}\thlabel{1.1}
For a right $H$-comodule algebra $A$, the following statements are equivalent.
\begin{enumerate}
\item $(F_2,G_2)$ is a pair of inverse equivalences;
\item $(F_2,G_2)$ is a pair of inverse equivalences and $A\in {}_{B}\Mm$ is flat;
\item $\can$ is an isomorphism and $A\in {}_{B}\Mm$ is faithfully flat;
\item $(F_1,G_1)$ is a pair of inverse equivalences;
\item $(F_1,G_1)$ is a pair of inverse equivalences and $A\in \Mm_{B}$ is flat;
\item $\can'$ is an isomorphism and $A\in \Mm_{B}$ is faithfully flat.
\end{enumerate}
If these conditions are satisfied, then we say that $A$ is a faithfully flat $H$-Galois extension
of $B$.
\end{theorem}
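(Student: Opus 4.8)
The plan is to prove the cycle of implications in two mirror-symmetric halves and then to bridge them. First observe that $(1)\Leftrightarrow(2)$ and $(4)\Leftrightarrow(5)$ are essentially formal: the forward directions merely forget the flatness assertion, while the converses hold because an equivalence of categories is exact, so that $F_2=-\ot_B A$ (resp.\ $F_1=A\ot_B-$) being half of an equivalence forces $A$ to be flat as a left (resp.\ right) $B$-module. Thus the substance is to establish $(1)\Leftrightarrow(3)$, its mirror image $(4)\Leftrightarrow(6)$, and finally the bridge $(3)\Leftrightarrow(6)$.

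For $(1)\Rightarrow(3)$ I would argue as follows. If $(F_2,G_2)$ is an equivalence then $F_2$ is faithful and exact, which is exactly the statement that $A$ is faithfully flat in ${}_B\Mm$. To see that $\can$ is bijective, note that $\can$ is a morphism in $\Mm^H_A$ between $A\ot_B A=F_2(A)$ and $A\ot H$ equipped with its diagonal $A$-action and the coaction $\Id\ot\Delta$; on coinvariants $G_2(\can)$ is an isomorphism (the identity under the canonical identifications $(A\ot_B A)^{{\rm co}H}\cong A\cong(A\ot H)^{{\rm co}H}$). Since $G_2$, being half of an equivalence, reflects isomorphisms, $\can$ is itself an isomorphism.

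The heart of the matter is the converse $(3)\Rightarrow(1)$, i.e.\ the structure (``fundamental'') theorem, and the crucial object is the counit $\zeta_M\colon M^{{\rm co}H}\ot_B A\to M$, $m\ot a\mapsto ma$. I would first upgrade $\can$ to the family of generalized canonical maps $\can_M\colon M\ot_B A\to M\ot H$, $m\ot a\mapsto ma_{[0]}\ot a_{[1]}$, which are isomorphisms for every $M\in\Mm^H_A$ since $\can_M$ is obtained by applying the functor $M\ot_A-$ to the left $A$-linear map $\can$, under the identification $M\ot_B A\cong M\ot_A(A\ot_B A)$. Writing $M^{{\rm co}H}$ as the equalizer of $\rho_M$ and $m\mapsto m\ot 1$ and using that the faithfully flat functor $-\ot_B A$ reflects isomorphisms and preserves this equalizer, the bijectivity of $\zeta_M$ reduces after base change to the bijectivity of $\can$, the coaction being trivialized by $\can_M$. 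Once the counit is an isomorphism, the unit $\eta_N\colon N\to(N\ot_B A)^{{\rm co}H}$ comes for free: the triangle identity gives $\zeta_{F_2(N)}\circ F_2(\eta_N)=\Id$, so $F_2(\eta_N)=\eta_N\ot_B A$ is invertible, and faithful flatness of $A$ lets us cancel $-\ot_B A$ and conclude that $\eta_N$ is invertible. This settles $(1)\Leftrightarrow(2)\Leftrightarrow(3)$, and the identical argument with the sides reversed settles $(4)\Leftrightarrow(5)\Leftrightarrow(6)$.

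It remains to bridge the two triples by proving $(3)\Leftrightarrow(6)$. The equivalence of ``$\can$ bijective'' and ``$\can'$ bijective'' I would extract from the bijectivity of the antipode $S$, by exhibiting each canonical map as the composite of the other with invertible endomorphisms of $A\ot H$ built from $S$ and $S^{-1}$. The remaining, and I expect most delicate, point is the comparison of left and right faithful flatness of $A$ over $B$: a priori these are independent conditions on the bimodule ${}_BA_B$, and it is precisely the Galois structure ($\can$ bijective) that forces them to coincide. I would exploit the $(A,A)$-bimodule isomorphism $A\ot_B A\cong A\ot H$ furnished by $\can$ — whose right-hand side becomes free on both sides after the $S$-twist identifying the diagonal and trivial $A$-actions — to transport faithful flatness from one side to the other. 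This last comparison, together with the structure theorem above, is where the bijectivity of $S$ and faithful flatness enter in an essential way, and it is the step I would expect to demand the most care.
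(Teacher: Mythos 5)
A preliminary remark: the paper does not prove this statement at all --- it is quoted verbatim from Schneider \cite{Schneider0} --- so your attempt can only be measured against the standard (Schneider's) proof. Measured that way, most of your skeleton is sound. The formal equivalences $(1)\Leftrightarrow(2)$ and $(4)\Leftrightarrow(5)$ are fine (kernels and cokernels in $\Mm^H_A$ are computed on underlying vector spaces, so half of an equivalence being exact does force flatness), and $(1)\Rightarrow(3)$ is correct: an exact faithful tensor functor is exactly faithful flatness, and $G_2$ reflects isomorphisms. Your treatment of $(3)\Rightarrow(1)$ is also essentially Schneider's: $\can_M=M\ot_A\can$ is an isomorphism for every $M\in\Mm^H_A$; the counit is handled by tensoring the equalizer $0\to M^{{\rm co}H}\to M\rightrightarrows M\ot H$ with the flat module $A$ and comparing, through $\can_M$ and $\can_{M\ot H}$, with the pair $(\rho_M\ot H, M\ot\Delta)$ on $M\ot H$, whose equalizer is $\rho(M)\cong M$ (your phrase ``the coaction being trivialized by $\can_M$'' compresses precisely this two-column ladder; note the naive alternative --- base-changing $\zeta_M$ itself and applying $\can$ --- only returns $\zeta_M\ot H$ and is circular). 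Deriving the unit from the counit via the triangle identity and cancelling $-\ot_B A$ by faithful flatness is correct and cleaner than the usual Amitsur-complex argument. The relation between the two canonical maps is also right: $\can'=\Phi\circ\can$ with $\Phi(a\ot h)=a_{[0]}\ot a_{[1]}S(h)$, $\Phi^{-1}(a\ot h)=a_{[0]}\ot S^{-1}(h)a_{[1]}$, which is where the bijectivity of $S$ enters.

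The genuine gap is the step you yourself flag as delicate: passing from left to right faithful flatness. Your proposed mechanism --- transporting faithful flatness through the $(A,A)$-bimodule isomorphism $A\ot_B A\cong A\ot H$, whose right side becomes free on either side after $S$-twists --- does not work. Freeness of $A\ot_B A$ over $B$ on both sides only produces the natural isomorphism $A\ot_B(A\ot_B N)\cong (A\ot_B N)\ot H$ (and its mirror), which is vacuous: exactness of the composite in $N$ is \emph{equivalent} to exactness of $A\ot_B-$ itself, so no information crosses sides. The structural obstruction is that left faithful flatness of $A$ is a statement about the functor $-\ot_B A$ on \emph{right} $B$-modules, whereas right flatness concerns $A\ot_B N$ for \emph{left} $B$-modules $N$, and $A\ot_B N$ carries no right $B$-module structure through which $-\ot_B A$ could test it; no bimodule identification of $A\ot_B A$ alone bridges that divide. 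Schneider's actual bridge goes through a side-symmetric intermediate condition: if $\can$ is bijective and ${}_B A$ is faithfully flat, then $A$ is faithfully coflat (equivalently, injective) as an $H$-comodule. The key is the natural isomorphism $(A\sq_H V)\ot_B A\cong A\ot V$ for left $H$-comodules $V$, obtained from flatness of ${}_BA$ and $\can$; since \emph{both} $B$-actions on $A$ are $H$-colinear, $A\sq_H V$ is a $B$-bimodule, so the cokernel measuring failure of exactness of $A\sq_H-$ is a right $B$-module, which is exactly where left faithful flatness applies to kill it. Faithful coflatness, together with $\can'$ and the bijective antipode (needed to convert right comodules into left ones), then yields right faithful flatness by the mirror identity. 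Without this coflatness interchange (or an equivalent descent argument), your conditions $(3)$ and $(6)$ remain two unbridged triples, and the theorem is not proved.
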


\subsection*{The Miyashita-Ulbrich action} Let $A$ be a
faithfully flat right $H$-Galois extension, and consider the map
$$\gamma_A=\can^{-1}\circ(\eta_A\ot H):\ H\to A\ot_{B}A,\quad h\mapsto
\sum_i l_i(h)\ot_{B}r_i(h).$$ Then the element $\gamma_A(h)$ is
characterized by the property
\begin{equation}\eqlabel{1.2.1}
\sum_i l_i(h)r_i(h)_{[0]}\ot r_i(h)_{[1]}=1\ot h.
\end{equation}
For all $h,h'\in H$ and $a\in A$, we have (see \cite[3.4]{Schneider1}):
\begin{eqnarray}
&&\gamma_A(h)\in (A\ot_{B}A)^{B};\eqlabel{1.2.2}\\
&&\gamma_A(h_{(1)})\ot h_{(2)}=
\sum_i l_i(h)\ot_{B} r_i(h)_{[0]}\ot r_i(h)_{[1]};\eqlabel{1.2.3}\\
&&\gamma_A(h_{(2)})\ot S(h_{(1)})=
\sum_i l_i(h)_{[0]}\ot_{B} r_i(h)\ot l_i(h)_{[1]};\eqlabel{1.2.4}\\
&&\sum_i l_i(h)r_i(h)=\varepsilon(h)1_A;\eqlabel{1.2.5}\\
&&\sum_i a_{[0]}l_i(a_{[1]})\ot_B r_i(a_{[1]})=1\ot_B a;\eqlabel{1.2.6}\\
&&\gamma_A(hh')=\sum_{i,j} l_i(h')l_j(h)\ot_{B}
r_j(h)r_i(h').\eqlabel{1.2.7}
\end{eqnarray}
Using the above formulas, it is straightforward to show that $Z(B)$, the center
of $B$, is a right $H$-module algebra under the Miyashita-Ulbrich action:
$$x\bullet h=\sum_i l_i(h)xr_i(h),$$
for all $x\in Z(B)$, $h\in H$. In what follows, we will view
$Z(B)$ as a left $H$-module algebra via
\begin{equation}\eqlabel{1.2.8}
h\cdot x=x\bullet S^{-1}(h)=\sum_i l_i(S^{-1}(h))xr_i(S^{-1}(h)).
\end{equation}
We will need the following commutation rule in the sequel.

\begin{lemma}\lelabel{1.2}
For $x\in Z(B)$ and $a\in A$, we have
\begin{equation}\eqlabel{1.2.9}
xa=a_{[0]}(S(a_{[1]})\cdot x)\quad{\rm and}\quad
ax=(a_{[1]}\cdot x)a_{[0]}.
\end{equation}
\end{lemma}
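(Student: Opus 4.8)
The plan is to derive both formulas from the fundamental identity \equref{1.2.6}, exploiting the following elementary but crucial observation: since $x$ lies in the \emph{centre} of $B$, the map
$$m_x\colon A\ot_B A\to A,\qquad u\ot_B v\mapsto uxv,$$
is well defined. Indeed, for $b\in B$ one has $ubxv=uxbv$ precisely because $x$ commutes with $b$, which is exactly what is needed for $m_x$ to factor through the balanced tensor product over $B$. This map is essentially the only device I would need.

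First I would prove the left-hand identity. Using the definition \equref{1.2.8} of the $H$-action together with $S^{-1}\circ S=\Id$, one rewrites $S(a_{[1]})\cdot x = x\bullet a_{[1]} = \sum_i l_i(a_{[1]})\,x\,r_i(a_{[1]})$. Consequently
$$a_{[0]}\bigl(S(a_{[1]})\cdot x\bigr)=\sum_i a_{[0]}l_i(a_{[1]})\,x\,r_i(a_{[1]})=m_x\Bigl(\sum_i a_{[0]}l_i(a_{[1]})\ot_B r_i(a_{[1]})\Bigr),$$
and since by \equref{1.2.6} the argument of $m_x$ equals $1\ot_B a$, the right-hand side collapses to $m_x(1\ot_B a)=xa$. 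This settles the first equation.

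For the second identity I would feed the first one back into itself. Applying the just-proved formula to the central element $a_{[1]}\cdot x$ and the algebra element $a_{[0]}$, and then expanding the coaction on $a_{[0]}$, coassociativity of the comodule structure reassembles the two surviving $H$-components into $\Delta$ of a single leg:
$$(a_{[1]}\cdot x)a_{[0]}=a_{[0]}\Bigl(S\bigl((a_{[1]})_{(1)}\bigr)\cdot\bigl((a_{[1]})_{(2)}\cdot x\bigr)\Bigr)=a_{[0]}\Bigl(\bigl(S((a_{[1]})_{(1)})(a_{[1]})_{(2)}\bigr)\cdot x\Bigr).$$
The left-module property $h\cdot(h'\cdot x)=(hh')\cdot x$ is used in the second step; now the antipode axiom $S(h_{(1)})h_{(2)}=\varepsilon(h)1$ turns the inner $H$-factor into $\varepsilon(a_{[1]})1$, and the counit property $a_{[0]}\varepsilon(a_{[1]})=a$ leaves exactly $ax$.

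The first computation is routine. The step most likely to go wrong, and hence the one I would check most carefully, is the coassociativity relabelling in the last paragraph: one must verify that after coacting on $a_{[0]}$ the two legs appear in the order producing $S(h_{(1)})h_{(2)}$ rather than the reversed product $S(h_{(2)})h_{(1)}$, since only the former collapses through the standard antipode axiom. An alternative that sidesteps this bookkeeping is to prove the second formula directly by the same $m_x$ device, after first establishing the mirror of \equref{1.2.6}, namely $\sum_i l_i(S^{-1}(a_{[1]}))\ot_B r_i(S^{-1}(a_{[1]}))a_{[0]}=a\ot_B 1$; this in turn follows by applying the isomorphism $\can$ to both sides and invoking \equref{1.2.1} together with the identity $S^{-1}(h_{(2)})h_{(1)}=\varepsilon(h)1$.
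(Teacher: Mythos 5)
Your proof is correct and essentially coincides with the paper's: the first formula is read off from \equref{1.2.6} using the centrality of $x$ (your map $m_x$ merely packages in one step the paper's manipulation inside $A\ot_B A$, where $x$ is slid across the tensor and then multiplication is applied), and the second formula is obtained exactly as in the paper, by applying the first one to the central element $a_{[1]}\cdot x$ and to $a_{[0]}$, then collapsing $S(a_{[1]})a_{[2]}=\varepsilon(a_{[1]})1$. The coassociativity relabelling you flagged does come out in the favourable order $S(h_{(1)})h_{(2)}$ (as it must, since $(\rho\ot H)\rho=(A\ot\Delta)\rho$ puts $\Delta$ on the last leg), and your alternative route via the mirror identity $\sum_i l_i(S^{-1}(a_{[1]}))\ot_B r_i(S^{-1}(a_{[1]}))a_{[0]}=a\ot_B 1$ is also valid, though not needed.
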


\begin{proof}
From \equref{1.2.6}, we know that $\sum_i a_{[0]}l_i(a_{[1]})\ot_B r_i(a_{[1]})=1\ot_B a\in B\ot_B A
\subset A\ot_B A$,
and then we can see that
$$x\ot_B a=\sum_i xa_{[0]}l_i(a_{[1]})\ot_B r_i(a_{[1]})=
\sum_i a_{[0]}l_i(a_{[1]})x\ot_B r_i(a_{[1]}),$$
hence
$$xa= \sum_i a_{[0]}l_i(a_{[1]})x r_i(a_{[1]})=a_{[0]}(S(a_{[1]})\cdot x).$$
For all $h\in H$, we have that $h\cdot x\in Z(B)$. Apply the first formula of \equref{1.2.9}
with $x$ replaced by $a_{[1]}\cdot x$; this gives the second formula:
$$(a_{[1]}\cdot x)a_{[0]}=a_{[0]}((S(a_{[1]})a_{[2]})\cdot x)=ax.$$
\end{proof}

\subsection*{Morita equivalences} We recall here some concepts and
results from \cite{CCMT}. These are the main ingredients in the
definition of $\Pic^H(A)$ and of the maps $g_1$ and $g_2$ in
\thref{t:main}.
\begin{definition}\delabel{1.4}
Let $A$ and $A'$ be right $H$-comodule algebras. An $H$-Morita context
connecting $A$ and $A'$ is a Morita context $(A,A',M,N,\alpha,\beta)$ such that
$M\in {}_A\Mm_{A'}^H$, $N\in {}_{A'}\Mm_A^H$,  $\alpha:\ M\ot_{A'} N\to A$
is a morphism in ${}_A\Mm_A^H$ and $\beta:\ N\ot_A M\to A'$ is a morphism in
${}_{A'}\Mm_{A'}^H$.
\end{definition}

\begin{definition}\delabel{1.5}
Assume that $A$ and $A'$ are right faithfully flat $H$-Galois
extensions of $A^{{\rm co}H}=B$ and ${A'}^{{\rm co}H}=B'$. A $\sq_H$-{\it
Morita context} between $B$ and $B'$ is a
Morita context $(B,B',M_1,N_1,\alpha_1,\beta_1)$ such that $M_1$ (resp. $N_1$) is a
left $A\sq_H{A'}^{\rm op}$-module (resp. $A'\sq_HA^{\rm op}$-module) and
\begin{itemize}
\item $\alpha_1:\ M_1\ot_{B'} N_1\to B$ is left $A\sq_HA^{\rm op}$-linear,
\item $\beta_1:\ N_1\ot_{B} M_1\to B'$ is left $A'\sq_H{A'}^{\rm op}$-linear.
\end{itemize}
\end{definition}

$\Morita(B,B')$ is the category with Morita contexts connecting
$B$ and $B'$ as objects. A morphism between the Morita contexts
$(B,B',M_1,N_1,\alpha_1,\beta_1)$ and
$(B,B',M_2,N_2,\alpha_2,\beta_2)$ is a couple $(\mu,\nu)$, with
$\mu:\ M_1\to M_2$ and $\nu:\ N_1\to N_2$ bimodule maps such that
$\alpha_1=\alpha_2\circ(\mu\ot_{B'}\nu)$ and $\beta_1=\beta_2\circ
(\nu\ot_B \mu)$.

In a similar way (see \cite{CCMT}), we introduce the categories
$\Morita^{\sq_H}(B,B')$ and $\Morita^H(A,A')$.

We recall the following result, see \cite[Theorems 5.7 and 5.9]{CCMT}.

\begin{theorem} \thlabel{1.6}
Assume that $A$ and $A'$ are right faithfully flat $H$-Galois
extensions of $B$ and $B'$.
\begin{enumerate}
\item The categories $\Morita^H(A,A')$ and $\Morita^{\sq_H}(B,B')$
are equivalent. The equivalence functors send strict contexts to strict contexts.
\item Let $(B,B',M_1,N_1,\alpha_1,\beta_1)$ be strict Morita context.
If $M_1$ has a left $A\sq_H {A'}^{\rm op}$-module structure, then there is a unique
left $A'\sq_H A^{\rm op}$-module structure on $N_1$ such that
$(B,B',M_1,N_1,\alpha_1,\beta_1)$ is a strict
$\sq_H$-Morita context. The corresponding strict $H$-Morita context\\
$(A,B,M,N,\alpha,\beta)$ is given by the following data
$$M= (A\ot {A'}^{\rm op})\ot_{A\square {A'}^{\rm op}}M_1\in {}_A\Mm_{A'}^H;$$
$$N= (A'\ot A^{\rm op})\ot_{A'\square A^{\rm op}}N_1\in
{}_{A'}\Mm_A^H;$$
$$\alpha= (A\ot A^{\rm op})\ot_{A\square A^{\rm op}}\beta_1~~;~~
\beta= (A'\ot {A'}^{\rm op})\ot_{{A'}\square {A'}^{\rm op}}\beta_1.$$
\end{enumerate}
\end{theorem}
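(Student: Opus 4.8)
My plan is to construct the equivalence in part (1) directly from the faithfully flat Hopf–Galois descent equivalences provided by \thref{1.1}, and then to deduce part (2) as a rigidity statement for strict contexts. First I would define a functor $\Phi\colon \Morita^H(A,A')\to\Morita^{\sq_H}(B,B')$ by taking coinvariants in each slot. Given an $H$-Morita context $(A,A',M,N,\alpha,\beta)$ with $M\in{}_A\Mm^H_{A'}$ and $N\in{}_{A'}\Mm^H_A$, I set $M_1:=M^{{\rm co}H}$ and $N_1:=N^{{\rm co}H}$; restriction of scalars along $B=A^{{\rm co}H}\hookrightarrow A$ and $B'={A'}^{{\rm co}H}\hookrightarrow A'$ makes these a $(B,B')$- and a $(B',B)$-bimodule. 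The key point is a compatibility lemma: for $m\in M_1$ and $\sum_i a_i\ot a'_i\in A\sq_H{A'}^{\rm op}$ the element $\sum_i a_i m a'_i$ is again coinvariant, as one sees by applying the coaction and using the defining cotensor relation to cancel the $H$-factors; this equips $M_1$ with a left $A\sq_H{A'}^{\rm op}$-module structure, and dually $N_1$ with an $A'\sq_H A^{\rm op}$-structure.

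To obtain the connecting maps $\alpha_1,\beta_1$ I would restrict $\alpha,\beta$ to coinvariants, using the canonical identification $(M\ot_{A'}N)^{{\rm co}H}\cong M_1\ot_{B'}N_1$ (and its partner for $N\ot_A M$); this is exactly where faithful flatness enters, since the identification amounts to the descent equivalence $(F_1,G_1)$ being compatible with the relevant tensor products. The required $A\sq_H A^{\rm op}$- and $A'\sq_H{A'}^{\rm op}$-linearity of $\alpha_1,\beta_1$ then follows from the $H$-colinearity and bimodule-linearity of $\alpha,\beta$. For the inverse functor $\Psi$ I would induce along $F_1=A\ot_B-$ using the explicit formulas of part (2); that $\Phi$ and $\Psi$ are mutually inverse is precisely \thref{1.1} applied slotwise. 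Strictness is preserved in both directions because, by faithful flatness, a morphism of relative Hopf modules is an isomorphism if and only if its restriction to coinvariants is one, and conversely $A\ot_B(-)$ reflects and preserves isomorphisms.

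For part (2) I would argue that strictness forces the second bimodule. A strict context is invertible, so $N_1$ is canonically isomorphic to the dual of $M_1$ (as a $(B',B)$-bimodule), and any $A\sq_H{A'}^{\rm op}$-module structure on $M_1$ transports uniquely to an $A'\sq_H A^{\rm op}$-structure on this dual compatible with $\alpha_1$ and $\beta_1$; the linearity requirements on $\alpha_1,\beta_1$ pin it down, giving uniqueness. Once $N_1$ carries this structure, the associated $H$-Morita context is produced by inducing up as in the displayed formulas, and one verifies — again via \thref{1.1} — that the induced data form a strict $H$-Morita context lying over the given $\sq_H$-context.

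The principal obstacle is the monoidal compatibility step: establishing $(M\ot_{A'}N)^{{\rm co}H}\cong M_1\ot_{B'}N_1$ together with the well-definedness of the cotensor-product actions. This is where the faithful flatness hypothesis and the equivalent characterizations of \thref{1.1} do the real work; the remaining verifications — associativity and unitality of the new actions, and linearity of the connecting maps — are routine bookkeeping with the Sweedler notation and the cotensor relation.
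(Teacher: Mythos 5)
The paper offers no proof of \thref{1.6} to compare against: the result is recalled verbatim from \cite[Theorems 5.7 and 5.9]{CCMT}. Your outline reconstructs essentially the strategy of that cited proof --- coinvariants in one direction, induction in the other, with faithful flatness and \thref{1.1} powering the descent --- and the architecture is sound, including the correct observation that $\sum_i a_i m a'_i$ stays coinvariant for $\sum_i a_i\ot a'_i$ in the cotensor algebra (the Miyashita--Ulbrich computation in bimodule form). Two compressions deserve flagging, because they are where the actual content of the cited proof lives. First, ``\thref{1.1} applied slotwise'' understates the work: Schneider's equivalence is one-sided, and the genuine step is the bimodule upgrade ${}_A\Mm^H_{A'}\simeq {}_{A\sq_H {A'}^{\rm op}}\Mm$, in which the right $A'$-action on $A\ot_B M_1$ has to be reconstructed from the $A\sq_H{A'}^{\rm op}$-action on $M_1$ by means of the translate maps $\gamma_A(h)=\sum_i l_i(h)\ot_B r_i(h)$; this is exactly the mechanism the present paper reproduces in \seref{5} via the isomorphism $\alpha_N$ and its explicit inverse, and checking that the unit and counit of the adjunction respect these transported structures is not formal bookkeeping.

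Second, your ``canonical identification'' $(M\ot_{A'}N)^{{\rm co}H}\cong M_1\ot_{B'}N_1$ is overstated for a general (non-strict) context: writing $N\cong A'\ot_{B'}N_1$ gives $M\ot_{A'}N\cong M\ot_{B'}N_1$ with coaction concentrated in the first factor, and extracting coinvariants there requires $N_1$ to be flat as a left $B'$-module --- automatic in the strict case, where $N_1$ is finitely generated projective, but not in general. This does not sink the construction, because to define $\alpha_1$ you only need the canonical \emph{map} $M_1\ot_{B'}N_1\to (M\ot_{A'}N)^{{\rm co}H}$ followed by $\alpha$, which exists without any flatness; but as written your functor $\Phi$ leans on an isomorphism you have not earned. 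Your part (2) argument --- strictness forces $N_1$ to be the bimodule dual of $M_1$ and transports the module structure uniquely --- is the standard Morita rigidity and works, provided you first install the tensor-product $A\sq_H A^{\rm op}$-action on $M_1\ot_{B'}N_1$ (the analogue of \prref{5.11}), since without it the linearity condition on $\alpha_1$ that pins the structure down is not even stated.
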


\subsection*{Hopf subalgebras} Now let $K$ be a Hopf subalgebra
of $H$. We assume that the antipode of $K$ is bijective, and that
$H$ is faithfully flat as a left $K$-module. Let
$K^+=\Ker(\varepsilon_K)$. It is well-known, and easy to prove
(see \cite[Sec. 1]{Ulbrich}) that
$$\ol{H}=H/HK^+\cong H\ot_K k$$
is a left $H$-module coalgebra, with operations
$$h\cdot \ol{l}=\ol{hl},~~\Delta_{\ol{H}}(\ol{h})=\ol{h}_{(1)}\ot \ol{h}_{(2)},~~
\varepsilon_{\ol{H}}(\ol{h})=\varepsilon(h).$$
The class in $\ol{H}$ represented by $h\in H$ is denoted by $\ol{h}$.
$\ol{1}$ is a grouplike element of $\ol{H}$, and we consider coinvariants with
respect to this element. A right $H$-comodule $M$ is also a right $\ol{H}$-comodule,
by corestriction of coscalars:
$$\rho_{\ol{H}}(m)=m_{[0]}\ot \ol{m}_{[1]}.$$
The $\ol{H}$-coinvariants of $M\in \Mm^H$ are then
\begin{eqnarray*}
M^{{\rm co}\ol{H}}&=&\{m\in M~|~m_{[0]}\ot \ol{m}_{[1]}=m\ot \ol{1}\}\\
&=& \{m\in M~|~\rho(m)\in M\ot K\}\cong M\square_H K.
\end{eqnarray*}
If $A$ is a right $H$-comodule algebra, then $A^{{\rm co}\ol{H}}$
is a right $K$-comodule algebra, and $(A^{{\rm co}\ol{H}})^{{\rm co}K}
=A^{{\rm co}H}$. In \cite[Cor. 7.3]{CCMT}, we have seen the following result,
based on \cite[Remark 1.8]{Schneider1}.

\begin{proposition}\prlabel{1.3}
Let $H$, $K$ and $A$ be as above, and assume that $A$ is a faithfully
flat $H$-Galois extension of $B$. Then $A^{{\rm co}\ol{H}}$ is a faithfully flat
$K$-Galois extension of $B$.
\end{proposition}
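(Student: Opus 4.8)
The plan is to verify directly the two conditions from \thref{1.1} for the right $K$-comodule algebra $C:=A^{{\rm co}\ol H}$ over $B=C^{{\rm co}K}$: that the canonical map $\can_C\colon C\ot_B C\to C\ot K$, $c\ot_B d\mapsto cd_{[0]}\ot d_{[1]}$, is bijective, and that $C$ is faithfully flat over $B$. By condition~(3) these two facts give that $C$ is a faithfully flat $K$-Galois extension of $B$.

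For the canonical map, the key point is that the translation map $\gamma_A$ of $A$ restricts to $K$, i.e. $\gamma_A(k)\in C\ot_B C$ for every $k\in K$. Indeed, applying $\mathrm{id}\ot\pi$ (with $\pi\colon H\to\ol H$ the projection) to \equref{1.2.3} and using that $\ol k=\varepsilon(k)\ol 1$ for $k\in K$ gives $(\mathrm{id}\ot\rho_{\ol H})\gamma_A(k)=\gamma_A(k)\ot\ol 1$, so the right leg of $\gamma_A(k)$ is $\ol H$-coinvariant; since $A$ is flat over $B$ one has $(A\ot_B A)^{{\rm co}\ol H}=A\ot_B C$ (for the coaction on the right leg), whence $r_i(k)\in C$. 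The symmetric manipulation of \equref{1.2.4}, together with $S(K)\subseteq K$, puts $l_i(k)$ in $C$. Setting $\gamma_C:=\gamma_A|_K\colon K\to C\ot_B C$, one checks using \equref{1.2.1} and \equref{1.2.6} (the latter applied to elements of $C\subseteq A$) that $c\ot k\mapsto c\,\gamma_C(k)$ is a two-sided inverse of $\can_C$. This step is essentially the bookkeeping already recorded in \equref{1.2.1}--\equref{1.2.6} and uses only that $K$ is a Hopf subalgebra.

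For faithful flatness, faithfulness comes for free: if $N\ot_B C=0$ for a right $B$-module $N$, then $N\ot_B A=(N\ot_B C)\ot_C A=0$, so $N=0$ because $A$ is faithfully flat over $B$. The substance is thus the \emph{flatness} of $C$ over $B$, and this is exactly where the hypothesis that $H$ is faithfully flat over $K$ is needed. I would deduce it from the faithful flatness of $A$ over $C$: granting the latter, a reflecting argument along $B\subseteq C\subseteq A$---apply the faithfully exact functor $-\ot_C A$ to an injection of right $B$-modules tensored with $C$, and use $(N\ot_B C)\ot_C A=N\ot_B A$ together with the flatness of ${}_BA$---shows that ${}_BC$ is flat.

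The main obstacle is therefore the faithful flatness of $A$ over $C$. My approach is to transport the Galois isomorphism: $\can_A$ identifies $A\ot_B A\cong A\ot H$ as $C$-modules, the $C$-action being the diagonal one induced by the $K$-valued coaction on $C$, and this reduces the claim to the faithful flatness over $C$ of the induced module $A\ot H$, which in turn reflects precisely the faithful flatness of $H$ over $K$ (equivalently the exactness of $-\sq_H K$ and the identity $K=H^{{\rm co}\ol H}$). Unlike the canonical map, this cannot be settled by a finite identity and is the genuine technical heart of the proposition; it is established by Schneider's descent techniques. Everything else in the statement reduces cleanly to this point and to the translation-map computation above.
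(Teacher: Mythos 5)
Your two reductions at the periphery are fine: faithfulness of $-\ot_B C$ does follow from $N\ot_B C\ot_C A=N\ot_B A$, and granting that ${}_CA$ is faithfully flat, your reflection argument for flatness of ${}_BC$ is correct. But the acknowledged heart of the matter --- faithful flatness of $A$ over $C:=A^{{\rm co}\ol H}$ --- is not proved, and the sketch you offer for it is circular. Transporting along $\can_A$ does identify $A\ot_BA$ with $A\ot H$ carrying the diagonal $C$-action $(x\ot h)\cdot c=xc_{[0]}\ot hc_{[1]}$, but this action untwists: $\theta(x\ot h)=x_{[0]}\ot hS^{-1}(x_{[1]})$, with inverse $x\ot h\mapsto x_{[0]}\ot hx_{[1]}$, is a $C$-module isomorphism onto $A\ot H$ with $C$ acting on the $A$-factor alone (check it using $c_{[0]}\ot c_{[2]}S^{-1}(c_{[1]})=c\ot 1$). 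So ``$A\ot H$ faithfully flat over $C$'' is \emph{equivalent} to ``$A$ faithfully flat over $C$'': the Galois isomorphism transports the problem, it does not reduce it to faithful flatness of $H$ over $K$. That hypothesis actually enters through the identification $M^{{\rm co}\ol H}=\{m\mid \rho(m)\in M\ot K\}\cong M\sq_HK$ recorded in \seref{1} (Ulbrich/Takeuchi), i.e.\ through coinvariants, not through any $C$-module structure on $A\ot H$. A route that works: $\can'_A$ is colinear for the left-leg $\ol H$-coaction on $A\ot_BA$ and the coaction $x\ot h\mapsto x\ot h_{(1)}\ot\ol{h_{(2)}}$ on $A\ot H$; taking coinvariants (flatness of ${}_BA$ on the source, the Ulbrich--Takeuchi identification giving $(A\ot H)^{{\rm co}\ol H}=A\ot K$ on the target) yields $C\ot_BA\cong A\ot K$ as left $B$-, right $A$-modules. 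Then for an injection $N\to N'$ of right $B$-modules, $(N\ot_BC)\ot_BA\cong N\ot_BA\ot K$ naturally, the outer functor $-\ot_BA$ is faithfully exact, and faithful flatness of ${}_BC$ follows by reflection --- with no purity problem and no need for $A$ faithfully flat over $C$ at all. Your detour through ${}_CA$ replaces the proposition by a statement of the same depth (it is the companion assertion in \cite[Remark 1.8]{Schneider1}), and the one step you leave to ``Schneider's descent techniques'' is exactly the content being claimed; note the paper itself gives no internal proof either, but says so honestly by citing \cite[Cor.\ 7.3]{CCMT}.

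There is also a hidden circularity in your canonical-map step. From \equref{1.2.3} and \equref{1.2.4} you correctly get $\gamma_A(k)\in(A\ot_BC)\cap(C\ot_BA)$, using two-sided flatness of $A$ over $B$. But identifying this intersection with (the image of) $C\ot_BC$, and verifying the inverse identities \equref{1.2.1} and \equref{1.2.6} \emph{inside} $C\ot_BC$ rather than after mapping to $A\ot_BA$, both require the map $C\ot_BC\to A\ot_BA$ to be injective --- which needs flatness of $C$ over $B$, the very thing you prove afterwards (and conditionally). The argument can be saved by reordering: establish faithful flatness of ${}_BC$ and $C_B$ first (as above), after which $C\ot_BC$ embeds in $A\ot_BA$ and equals the intersection, and your translation-map bookkeeping goes through. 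It is worth noting that the paper runs this logic in the opposite direction: it derives \equref{1.3.1} only \emph{after} invoking \prref{1.3}, deducing injectivity of $i\ot_Bi$ from the already-known bijectivity of $\can_{A^{{\rm co}\ol H}}$, rather than building $\can^{-1}$ for $A^{{\rm co}\ol H}$ from a restricted $\gamma_A$.
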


Let $i:\ A^{{\rm co}\ol{H}}\to A$ and $j:\ K\to H$ be the inclusion maps. Then
we have a commutative diagram
$$\xymatrix{
A^{{\rm co}\ol{H}}\ot_B A^{{\rm co}\ol{H}}\ar[rr]^{\can_{A^{{\rm co}\ol{H}}}}
\ar[d]_{i\ot_B i}&&
A^{{\rm co}\ol{H}}\ot K\ar[d]^{i\ot j}\\
A\ot_BA\ar[rr]^{\can_A}&&A\ot H}$$
The map $i\ot j$ is injective (here we use the fact that we work over a field $k$).
From the fact that
$\can_{A^{{\rm co}\ol{H}}}$ is an isomorphism, it follows that
$i\ot_B i$ is also injective. For $k\in K$, we then have
$$(\can_A\circ (i\ot_B i))(\gamma_{A^{{\rm co}\ol{H}}})=
((i\ot j)\circ \can_{A^{{\rm co}\ol{H}}})(\gamma_{A^{{\rm co}\ol{H}}})=1\ot j(k),$$
hence
\begin{equation}\eqlabel{1.3.1}
(i\ot_B i)\gamma_{A^{{\rm co}\ol{H}}}(k)=\gamma_A(j(k)).
\end{equation}

\section{Cotensor product of Hopf-Galois extensions}\selabel{2}
Troughout this Section, we assume that $H$ is cocommutative. $\Delta:\ H\to H\ot H$
is a Hopf algebra map, so we can consider $H$ as a Hopf subalgebra of $H\ot H$.
Then $H\ot H$ is a left $H$-module by restriction of scalars.

\begin{lemma}\lelabel{2.1}
$H\ot H$ is faithfully flat as a left $H$-module.
\end{lemma}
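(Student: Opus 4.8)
The plan is to show that $H\ot H$, viewed as a left $H$-module through restriction of scalars along $\Delta$, is faithfully flat by untwisting the resulting action into a manifestly free one. Concretely, this restricted action is the diagonal action $h\cdot(a\ot b)=\Delta(h)(a\ot b)=h_{(1)}a\ot h_{(2)}b$, and the goal is to identify the module $(H\ot H,\text{ diagonal})$ with $H\ot H$ carrying an action on the first tensorand only.

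First I would introduce the map
$$\theta:\ H\ot H\to H\ot H,\quad \theta(a\ot b)=a_{(1)}\ot S(a_{(2)})b,$$
together with the candidate inverse $\theta'(a\ot b)=a_{(1)}\ot a_{(2)}b$. A short computation with the antipode axioms $a_{(1)}S(a_{(2)})=S(a_{(1)})a_{(2)}=\varepsilon(a)1$ shows $\theta\circ\theta'=\theta'\circ\theta=\Id$, so $\theta$ is a $k$-linear isomorphism. I would then equip the target copy of $H\ot H$ with the action $h\cdot(x\ot y)=hx\ot y$ and verify that $\theta$ intertwines the diagonal action on the source with this action on the target. Expanding $\theta(h_{(1)}a\ot h_{(2)}b)$ and invoking coassociativity, the decisive cancellation is $S(h_{(2)})h_{(3)}=\varepsilon(h_{(2)})1$, which absorbs the spurious $h$-factors appearing in the second tensorand and returns $ha_{(1)}\ot S(a_{(2)})b=h\cdot\theta(a\ot b)$, as required.

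Finally, the target $(H\ot H,\ h\cdot(x\ot y)=hx\ot y)$ is simply $H\ot_k V$ with $V=H$ regarded as a bare $k$-vector space; since $k$ is a field, any $k$-basis of $V$ furnishes a free basis, so this module is free, and a nonzero free module is faithfully flat. As $H\ot H\neq 0$, transporting along $\theta$ shows that $H\ot H$ with the diagonal action is free, hence faithfully flat. The argument is entirely formal, and I expect the only delicate point to be the Sweedler-index bookkeeping in the linearity check; note in particular that it does not use cocommutativity, which enters elsewhere in this section only to make $\Delta$ a coalgebra map and thereby realise $H$ as a Hopf subalgebra of $H\ot H$.
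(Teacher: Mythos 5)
Your proof is correct and follows essentially the same route as the paper: both untwist the diagonal $H$-action on $H\ot H$ into the action on the first tensorand alone (the paper's module $H\ot\lan H\ran$), and then conclude faithful flatness from the fact that $k$ is a field. The only difference is cosmetic: you write out the untwisting isomorphism $\theta(a\ot b)=a_{(1)}\ot S(a_{(2)})b$ and its inverse explicitly, where the paper merely asserts the module isomorphism and phrases the conclusion via the natural isomorphism of functors $-\ot_H(H\ot H)\cong -\ot_k H$ rather than via freeness.
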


\begin{proof}
Let $H\ot \lan H\ran$ be the vector space $H\ot H$, but with left $H$-action
$h(k\ot l)=hk\ot l$. Then $H\ot H$ and $H\ot \lan H\ran$ are isomorphic as left
$H$-modules, and we have the following natural isomorphisms of functors:
$$-\ot_H(H\ot H)\cong -\ot_H(H\ot \lan H\ran)\cong -\ot_k H,$$
an the result follows from the fact that $H$ is faithfully flat
as a $k$-vector space.
\end{proof}

In a similar way, we have an isomorphism $(H\ot H)\ot_H M\cong H\ot M$, for every left
$H$-module $M$. In particular, $k$ is a left $H$-module via the counit $\varepsilon$,
so we have an isomorphism
$$f:\ (H\ot H)\ot_H k\to H,~~f(\ol{h\ot k})=hS(k)$$
of $H$-module coalgebras, with left $H$-action on $H$ given by
$h\cdot k=\varepsilon(h)k$.

\begin{lemma}\lelabel{2.2}
Let $A$ and $A'$ be faithfully flat $H$-Galois extensions of $B$
and $B'$. Then the following statements hold.
\begin{enumerate}
\item $A\ot A'$ is a faithfully flat $H\ot H$-Galois extension of
$B\ot B'$. \item $(A\ot A')^{{\rm co}\ol{H\ot H}}\cong A\sq_H A'.$
\item $(A\sq_H A')^{{\rm co}H}=B\ot B'.$
\end{enumerate}
\end{lemma}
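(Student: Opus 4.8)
The plan is to establish the three statements in sequence, since (2) and (3) both rely on (1). For (1), $H$ is cocommutative, so both $H$ and $H\ot H$ are Hopf algebras, and $\Delta:H\to H\ot H$ realizes $H$ as a Hopf subalgebra of $H\ot H$. First I would note that $A\ot A'$ is a right $H\ot H$-comodule algebra via the tensor product of the two coactions, and that its coinvariants are $(A\ot A')^{{\rm co}(H\ot H)}=A^{{\rm co}H}\ot {A'}^{{\rm co}H}=B\ot B'$ (using faithful flatness to commute coinvariants past the tensor product). To verify the Hopf-Galois condition I would check that the canonical map $\can_{A\ot A'}$ factors as a suitable composite of $\can_A\ot\can_{A'}$ under the identification $(A\ot A')\ot_{B\ot B'}(A\ot A')\cong(A\ot_B A)\ot(A'\ot_B A')$; since $\can_A$ and $\can_{A'}$ are isomorphisms and $A,A'$ are faithfully flat over $B,B'$ respectively, it follows that $\can_{A\ot A'}$ is an isomorphism and $A\ot A'$ is faithfully flat over $B\ot B'$. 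By \thref{1.1} this gives the $H\ot H$-Galois property.

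For (2), I would invoke the general machinery of the Hopf subalgebra setup preceding \prref{1.3}, applied to the inclusion $K=H\hookrightarrow H\ot H$ via $\Delta$. The quotient module coalgebra is $\ol{H\ot H}=(H\ot H)/(H\ot H)H^+$, which the discussion after \leref{2.1} identifies (via the map $f$) with $H$ itself, with the trivial left $H$-action through $\varepsilon$. The coinvariants with respect to the grouplike $\ol{1}$ are then computed by the formula $M^{{\rm co}\ol{H\ot H}}\cong M\sq_{H\ot H}H$, and for $M=A\ot A'$ this cotensor product unwinds to the equalizer defining $A\sq_H A'$. The key is to check that under the identification $\ol{H\ot H}\cong H$, the corestricted coaction on $A\ot A'$ matches the two coactions whose coincidence defines the cotensor product $A\sq_H A'$; this is a direct diagram chase using the explicit form of $f$ and the definition of the coproduct structure on $\ol{H\ot H}$.

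For (3), the cleanest route is to apply \prref{1.3} with the same data: since $A\ot A'$ is a faithfully flat $H\ot H$-Galois extension of $B\ot B'$ by (1), and $K=H$ is a Hopf subalgebra over which $H\ot H$ is faithfully flat by \leref{2.1}, \prref{1.3} tells us that $(A\ot A')^{{\rm co}\ol{H\ot H}}=A\sq_H A'$ is a faithfully flat $H$-Galois extension of $B\ot B'$; in particular its $H$-coinvariants recover $(A\ot A')^{{\rm co}(H\ot H)}=B\ot B'$, using the transitivity relation $(A^{{\rm co}\ol{H}})^{{\rm co}K}=A^{{\rm co}H}$ recorded just before \prref{1.3}. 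The main obstacle I anticipate is bookkeeping in (2): correctly tracking how the isomorphism $f:(H\ot H)\ot_H k\to H$ transports the $\ol{H\ot H}$-coaction so that its coinvariants are literally the cotensor product $A\sq_H A'$ rather than some twisted variant. Once that identification is pinned down precisely, statements (1) and (3) are formal consequences of \thref{1.1} and \prref{1.3}, so the coaction-matching step in (2) is where the real care is needed.
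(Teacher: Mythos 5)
Your proposal is correct, and its backbone is the same as the paper's: realize $H$ as a Hopf subalgebra of $H\ot H$ via $\Delta$, use \leref{2.1} for faithful flatness of $H\ot H$ over $H$, and run the quotient machinery of \seref{1} with the identification $\ol{H\ot H}\cong H$. The differences are organizational rather than substantive, but worth recording. In (1), where you factor $\can_{A\ot A'}$ through $(A\ot_B A)\ot(A'\ot_{B'} A')$ as a permuted $\can_A\ot\can_{A'}$, the paper merely asserts bijectivity, so your factorization is a clean way to fill that in; your one-line claim that $A\ot A'$ is faithfully flat over $B\ot B'$ is exactly the paper's two-step argument (faithful flatness of $B\ot A'$ over $B\ot B'$, then of $A\ot A'$ over $B\ot A'$, then transitivity) compressed, and you should expect to unpack it that way. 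A small quibble: the identification $(A\ot A')^{{\rm co}(H\ot H)}=B\ot B'$ needs only flatness over the field $k$ (the paper applies $\varepsilon$ to one tensor leg at a time), not faithful flatness. In (2), you route through the identification $M^{{\rm co}\ol{H}}\cong M\sq_{H\ot H}K$ already recorded in \seref{1}, while the paper composes the corestricted coaction with $f(\ol{h\ot k})=hS(k)$ and checks directly that the coinvariance condition $\sum_i a_{i[0]}\ot a'_{i[0]}\ot a_{i[1]}S(a'_{i[1]})=\sum_i a_i\ot a'_i\ot 1$ rearranges to the cotensor equation; this is precisely the coaction-matching step you correctly flag as the delicate point, and it amounts to a short antipode manipulation. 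In (3) you genuinely diverge: the paper computes $(A\sq_H A')^{{\rm co}H}$ directly from the explicit coaction \equref{2.4.1}, whereas you deduce it from (1), (2) and the transitivity relation $(A^{{\rm co}\ol{H}})^{{\rm co}K}=A^{{\rm co}H}$. Your route is valid and non-circular, since that transitivity relation is a general comodule-algebra fact stated before \prref{1.3}, independent of this lemma; invoking \prref{1.3} there additionally yields \thref{2.5} en passant, the one remaining check being that the $K$-coaction produced by the quotient machinery on $A\sq_H A'$ agrees with \equref{2.4.1}, which follows at once from the cotensor condition.
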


\begin{proof}
(1) We first show that $(A\ot A')^{{\rm co}(H\ot H)}=B\ot B'$. We
have a map
$$f:\ B\ot B'\to (A\ot A')^{{\rm co}(H\ot H)},~~f(b\ot b')=b\ot b'.$$
$B\ot B'=(A\ot B')\cap(B\ot A')$ and $(A\ot A')^{{\rm co}(H\ot H)}$ are both subspaces
of $A\ot A'$, so it suffices to show that $f$ is surjective. Take
$\sum_i a_i\ot a'_i\in (A\ot A')^{{\rm co}(H\ot H)}$. Then
$$\sum_i a_{i[0]}\ot a'_{i[0]}\ot a_{i[1]}\ot a'_{i[1]}= \sum_i a_i\ot a'_i\ot 1\ot 1.$$
Applying $\varepsilon$ to the fourth tensor factor, we find
$$\sum_i a_{i[0]}\ot a'_{i}\ot a_{i[1]}= \sum_i a_i\ot a'_i\ot 1.$$
This means that $\sum_i a_i\ot a'_i\in B\ot A'$. In a similar way, we find that
$\sum_i a_i\ot a'_i\in A\ot B'$.\\
It is easy to show that $\can_{A\ot A'}$ is bijective. Finally $A\ot A'$ is faithfully
flat as a right $B\ot B'$-module: $B\ot A'$ is faithfully flat as a right $B\ot B'$-module
because for every left $B\ot B'$-module $M$ there is a natural isomorphism
$(B\ot A')\ot_{B\ot B'} M\cong A'\ot_{B'} M$. Similarly, 
$A\ot A'$ is faithfully flat as a right $B\ot A'$-module. Then apply the following
general property: if $f:\ A\to B$ and $g:\ B\to C$ are algebra morphisms, and $B/A$
and $C/B$ are faithfully flat, then $C/A$ is faithfully flat.

(2) We can apply \prref{1.3}, with $H$ replaced by $H\ot H$, $K$
by $H$ and $A$ by $A\ot A'$. Note that $\sum_i a_i\ot a'_i\in
(A\ot A')^{{\rm co}\ol{H\ot H}}$ if and only if
$$\sum_i a_{i[0]}\ot a'_{i[0]}\ot a_{i[1]}S(a'_{i[1]})= \sum_i a_i\ot a'_i\ot 1,$$
or
$$\sum_i a_{i[0]}\ot a'_i\ot a_{i[1]}= \sum_i a_i\ot a'_{i[0]}\ot a'_{i[1]},$$
which means precisely that $\sum_i a_i\ot a'_i\in A\sq_H A'$.

(3) We know that $A\sq_H A'$ is a right $H$-comodule algebra with
structure map $\rho$ given by
\begin{equation}\eqlabel{2.4.1}
\rho(\sum_i a_i\ot a'_i)= \sum_i a_{i[0]}\ot a_i'\ot a_{i[1]}=
\sum_i a_i\ot a'_{i[0]}\ot a'_{i[1]}.
\end{equation}
Take $x=\sum_i a_i\ot a'_i\in (A\sq_H A')^{{\rm co}H}$. It follows
from \equref{2.4.1} that $x\in (B\ot A')\cap (A\ot B')=B\ot B'$.
\end{proof}

Combining these observations with \prref{1.3}, we obtain the
following result, which is well-known in the situation where
$B=B'=k$.

\begin{theorem}\thlabel{2.5}
Let $A$ and $A'$ be faithfully flat $H$-Galois extensions of $B$ and $B'$. Then
$A\sq_H A'$ is a faithfully flat $H$-Galois extension of $B\ot B'$.
\end{theorem}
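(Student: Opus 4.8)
The plan is to realise $A\sq_H A'$ as the coinvariants of a larger Hopf-Galois extension and then invoke \prref{1.3}. Concretely, I would apply \prref{1.3} to the cocommutative Hopf algebra $H\ot H$, to its Hopf subalgebra $K=\Delta(H)\cong H$ (the diagonal copy, which is a genuine Hopf subalgebra precisely because $H$ is cocommutative, as noted at the start of this section), and to the $(H\ot H)$-comodule algebra $A\ot A'$. Granting the hypotheses, \prref{1.3} will identify $(A\ot A')^{{\rm co}\ol{H\ot H}}$ as a faithfully flat $K$-Galois, i.e. $H$-Galois, extension of the base $B\ot B'$, and $\leref{2.2}(2)$ will transport this along the isomorphism $(A\ot A')^{{\rm co}\ol{H\ot H}}\cong A\sq_H A'$ to give the theorem.

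Before \prref{1.3} can be applied, its three hypotheses must be checked, and each has already been arranged in the preceding lemmas. First, the antipode of $K\cong H$ is bijective by our standing assumption on $H$. Second, $H\ot H$ must be faithfully flat as a left $K$-module, which is exactly the content of \leref{2.1}. Third, $A\ot A'$ must be a faithfully flat $(H\ot H)$-Galois extension of its coinvariants; this is supplied by $\leref{2.2}(1)$, together with the identification $(A\ot A')^{{\rm co}(H\ot H)}=B\ot B'$ established in its proof, which is the algebra playing the role of the base $B$ in \prref{1.3}. With these in place, the conclusion of \prref{1.3} reads as claimed.

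The one point deserving genuine care—and the only place where the argument is more than assembly of the lemmas—is to confirm that the right $H$-comodule algebra structure that \prref{1.3} produces on $(A\ot A')^{{\rm co}\ol{H\ot H}}$ coincides, under the isomorphism of $\leref{2.2}(2)$, with the structure map $\rho$ on $A\sq_H A'$ recorded in \equref{2.4.1}. Here the $K$-coaction coming from \prref{1.3} is simply the restriction of the tensor coaction on $A\ot A'$ to those elements on which it lands in $(A\sq_H A')\ot\Delta(H)$; the cotensor relation defining $A\sq_H A'$ guarantees exactly this, and identifying $\Delta(H)$ with $H$ recovers precisely $\rho$. Once the two comodule structures are matched, $\leref{2.2}(3)$ confirms that the $H$-coinvariants of $A\sq_H A'$ are again $B\ot B'$, so the extension obtained is indeed of $B\ot B'$, completing the proof.
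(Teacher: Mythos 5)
Your proof is correct and follows essentially the same route as the paper: the paper also deduces \thref{2.5} by combining \leref{2.1} and \leref{2.2} with \prref{1.3}, applied with $H\ot H$ in place of $H$, the diagonal copy $\Delta(H)\cong H$ as the Hopf subalgebra $K$, and $A\ot A'$ in place of $A$. Your additional verification that the induced $K$-coaction on $(A\ot A')^{{\rm co}\ol{H\ot H}}\cong A\sq_H A'$ matches the coaction \equref{2.4.1} is a point the paper leaves implicit, and it is checked correctly.
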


We want to apply this theorem in the case when $A'$ is the
opposite algebra $A^{{\rm op}}$. Since $H$ is cocommutative,
$A^{{\rm op}}$ is a right $H$-comodule algebra, with coaction
$\rho$ given by
$$\rho(a)= a_{[0]}\ot S(a_{[1]}).$$

\begin{lemma}\lelabel{2.6}
If $A$ is a faithfully flat $H$-Galois extension of $B$, then $A^{\rm op}$ is
a faithfully flat $H$-Galois extension of $B^{\rm op}$.
\end{lemma}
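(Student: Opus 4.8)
The plan is to verify that $A^{\rm op}$, equipped with the coaction $\rho(a)=a_{[0]}\ot S(a_{[1]})$ stated just before the lemma, is a right $H$-comodule algebra with coinvariants $B^{\rm op}$, and then to establish condition (3) of \thref{1.1} for the extension $A^{\rm op}/B^{\rm op}$: that the canonical map is bijective and that $A^{\rm op}$ is faithfully flat as a left $B^{\rm op}$-module.

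First I would confirm that $\rho$ is a counital, coassociative coaction and an algebra map $A^{\rm op}\to A^{\rm op}\ot H$. Counitality follows from $\varepsilon\circ S=\varepsilon$. Coassociativity is exactly where cocommutativity enters: after invoking the original coassociativity and the anti-coalgebra identity $\Delta\circ S=(S\ot S)\circ\tau\circ\Delta$, the two sides of the coassociativity equation differ only by the transposition $S(h_{(1)})\ot S(h_{(2)})$ versus $S(h_{(2)})\ot S(h_{(1)})$, which agree precisely because $H$ is cocommutative. Multiplicativity of $\rho$ for the opposite product uses that $S$ is an anti-algebra map together with the twist built into the product on $A^{\rm op}\ot H$. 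Finally, since $S$ is bijective, the equation $a_{[0]}\ot S(a_{[1]})=a\ot 1$ is equivalent to $a_{[0]}\ot a_{[1]}=a\ot 1$, so $(A^{\rm op})^{{\rm co}H}=B$ as a set and hence $B^{\rm op}$ as a subalgebra.

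Next, faithful flatness is essentially automatic: the left $B^{\rm op}$-module $A^{\rm op}$ \emph{is} the right $B$-module $A$, which is faithfully flat by condition (6) of \thref{1.1} applied to $A$. So it remains to prove that $\can_{A^{\rm op}}$ is bijective, and I would do this by exhibiting it as a composite of three bijections. Let $\tau\colon A^{\rm op}\ot_{B^{\rm op}}A^{\rm op}\to A\ot_B A$ be the flip $a\ot b\mapsto b\ot a$; one checks it is well defined because moving an element of $B$ across the tensor in one module structure corresponds to moving it across in the other. A direct computation gives $\can_{A^{\rm op}}(a\ot b)=b_{[0]}a\ot S(b_{[1]})$, whence $\can_{A^{\rm op}}=(A\ot S)\circ\can'\circ\tau$, where $\can'$ is the second canonical map of $A$. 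Since $S$ is bijective, $\can'$ is an isomorphism by \thref{1.1}, and $\tau$ is clearly bijective, we conclude that $\can_{A^{\rm op}}$ is bijective. Thus $A^{\rm op}/B^{\rm op}$ satisfies condition (3) of \thref{1.1}, so it is a faithfully flat $H$-Galois extension.

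I expect the only real difficulty to be bookkeeping: keeping straight which multiplication (that of $A$ or of $A^{\rm op}$) and which coaction is in play at each step, in particular checking that $\tau$ is well defined over $B^{\rm op}$ and pinning down the precise form of $\can_{A^{\rm op}}$ so that the antipode lands in the correct tensor slot. Cocommutativity is indispensable and is used to make $\rho$ coassociative, i.e. to ensure that $A^{\rm op}$ is a comodule algebra at all.
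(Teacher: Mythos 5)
Your proposal is correct and takes essentially the same route as the paper: the paper's proof likewise writes $\can_{A^{\rm op}}(a\ot a')=a'_{[0]}a\ot S(a'_{[1]})=(A^{\rm op}\ot S)\circ\can'_A$ (leaving your flip $\tau$ implicit), deduces bijectivity from that of $\can'_A$ and $S$, transfers faithful flatness of $A\in\Mm_B$ to $A^{\rm op}\in{}_{B^{\rm op}}\Mm$, and concludes by \thref{1.1}. Your additional verification that $\rho(a)=a_{[0]}\ot S(a_{[1]})$ makes $A^{\rm op}$ a right $H$-comodule algebra with coinvariants $B^{\rm op}$ --- including the accurate observation that cocommutativity is needed exactly for coassociativity --- is detail the paper asserts without proof just before the lemma, not a different argument.
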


\begin{proof} The map
$\can_{A^{\rm op}}:\ A^{\rm op}\ot_{B^{\rm op}} A^{\rm op}\to A^{\rm op}\ot H$ is given by
$$\can_{A^{\rm op}}(a\ot a')=a'_{[0]}a\ot S(a'_{[1]})=(A^{\rm op}\ot S)\circ \can'_A.$$
Then $\can_{A^{\rm op}}$ is bijective since $\can'_A$ and $S$ are
bijective. We know from \thref{1.1} that $A\in \Mm_B$ is
faithfully flat, and this implies that
 $A^{\rm op}\in {}_{B^{\rm op}}\Mm$ is faithfully flat. It then follows from \thref{1.1}
 that $A^{\rm op}$ is also
a faithfully flat $H$-Galois extension.
\end{proof}

\begin{proposition}\prlabel{2.7}
Let $A$ be a faithfully flat $H$-Galois extension of $B$. Then
$A^{\sq e}:=A\sq_H A^{\rm op}$ is a faithfully flat $H$-Galois
extension of the enveloping algebra $B^e:=B\ot B^{\rm op}$.
Moreover, the element
\begin{equation}\eqlabel{2.7.1}
\gamma_{A^{\sq e}}(h):= \sum_{i,j} (l_i(h_{(1)})\ot
r_j(h_{(2)}))\ot_{B\ot B^{\rm op}} (r_i(h_{(1)})\ot l_j(h_{(2)}))
\end{equation}
belongs to $ A^{\sq e}\ot_{B^e} A^{\sq e}$.
\end{proposition}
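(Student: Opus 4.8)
The plan is to read off Proposition~\ref{pr:2.7} as a direct application of the machinery already assembled, and then to identify the abstract Galois element $\gamma_{A^{\sq e}}$ with the explicit formula \equref{2.7.1}. The first assertion is almost immediate: by \leref{2.6}, $A^{\rm op}$ is a faithfully flat $H$-Galois extension of $B^{\rm op}$, and \thref{2.5} (applied with $A'=A^{\rm op}$, $B'=B^{\rm op}$) then says that $A\sq_H A^{\rm op}$ is a faithfully flat $H$-Galois extension of $B\ot B^{\rm op}=B^e$. So the real content is the formula for the element $\gamma_{A^{\sq e}}(h)$.

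For that, I would use the transfer-of-structure result \equref{1.3.1}, which relates the Galois element of a coinvariant subalgebra to that of the ambient comodule algebra via the inclusion maps. Concretely, Theorem~\ref{th:2.5} is obtained by viewing $A^{\sq e}$ as $(A\ot A^{\rm op})^{{\rm co}\ol{H\ot H}}$ (\leref{2.2}(2)), a $K$-Galois extension with $K=H$ sitting inside $H\ot H$ via the cocommutative $\Delta$. The first step is therefore to compute $\gamma_{A\ot A^{\rm op}}$ for the $H\ot H$-Galois extension $A\ot A^{\rm op}$ of $B\ot B^{\rm op}$. Since the canonical map factorizes as a tensor product, its inverse does too, giving $\gamma_{A\ot A^{\rm op}}(h\ot h')=\sum_{i,j}(l_i(h)\ot l_j(h'))\ot_{B\ot B^{\rm op}}(r_i(h)\ot r_j(h'))$, where on the $A^{\rm op}$ factor one must track the twisted coaction $\rho(a)=a_{[0]}\ot S(a_{[1]})$ from \leref{2.6}.

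The second step is to restrict along $K=H\hookrightarrow H\ot H$, $h\mapsto h_{(1)}\ot h_{(2)}$. By \equref{1.3.1}, $\gamma_{A^{\sq e}}(h)$ is the image under $(i\ot_B i)$ of the Galois element of the sub-extension evaluated at $h$, which amounts to substituting $h\ot h'\rightsquigarrow \Delta(h)=h_{(1)}\ot h_{(2)}$ into the formula just computed. This yields exactly \equref{2.7.1}. The one point requiring care — and the main obstacle — is the bookkeeping on the $A^{\rm op}$ tensor slot: the relevant $\gamma$ for $A^{\rm op}$ is built from $\can'_{A}$ and the antipode (per the proof of \leref{2.6}), so I must verify that the twist produces $r_j(h_{(2)})$ in the left entry and $l_j(h_{(2)})$ in the right entry of \equref{2.7.1}, rather than the untwisted $l_j,r_j$. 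I expect this to follow by applying the characterizing property \equref{1.2.1} on each factor separately and checking that the twisted version for $A^{\rm op}$ recovers precisely the stated indices; the conclusion that the resulting element lies in $A^{\sq e}\ot_{B^e}A^{\sq e}$ is then guaranteed abstractly by \equref{1.2.2} together with \equref{1.3.1}, since $\gamma$ of a Galois extension always lands in the $B^e$-centralized cotensor-square.
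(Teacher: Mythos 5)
Your proposal is correct and takes essentially the same route as the paper: the paper likewise reduces everything to \equref{1.3.1} for $K=H\subseteq H\ot H$ via $\Delta$ (after \leref{2.6} and \thref{2.5} give the first assertion), and identifies the right-hand side of \equref{2.7.1} with $\gamma_{A^e}(\Delta(h))$ by checking the characterizing property \equref{1.2.1} --- concretely, applying $\can_{A^e}$ to the candidate element and simplifying with \equref{1.2.4}, \equref{1.2.5} and cocommutativity (so $S^2=\mathrm{id}$) to obtain $1\ot 1\ot \Delta(h)$. The factorwise verification you defer (that the twisted coaction on the $A^{\rm op}$ slot forces the swap $\gamma_{A^{\rm op}}(h)=\sum_j r_j(h)\ot_{B^{\rm op}} l_j(h)$) is exactly this computation, so your sketch closes without difficulty.
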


\begin{proof}
First observe that
$\can_{A^e}:\ A^e\ot_{B^e}A^e\to
A^e\ot H\ot H$ is given by
$$\can_{A\ot A^{\rm op}}((a\ot b)\ot (a'\ot b'))=
aa'_{[0]}\ot b'_{[0]}b\ot a'_{[1]}\ot S(b'_{[1]}).$$ Recall the
notation $\gamma_A(h):=\sum_i l_i(h)\ot_B r_i(h).$ Then we compute
that
\begin{eqnarray*}
&&\hspace*{-10mm}
\can_{A^e}\Bigl(\sum_{i,j} (l_i(h_{(1)})\ot r_j(h_{(2)}))\ot_{B^e} (r_i(h_{(1)})\ot l_j(h_{(2)}))
\Bigr)\\
&=&
\sum_{i,j} l_i(h_{(1)})r_i(h_{(1)})_{[0]}\ot l_j(h_{(2)})_{[0]}r_j(h_{(2)})\ot r_i(h_{(1)})_{[1]}
\ot S(l_j(h_{(2)})_{[1]})\\
&\equal{(\ref{eq:1.2.1},\ref{eq:1.2.4})}&
\sum_j 1\ot l_j(h_{(3)})r_j(h_{(3)})\ot h_{(1)}\ot S(S(h_{(2)}))\equal{\equref{1.2.5}} 1\ot 1\ot \Delta(h).
\end{eqnarray*}
Let $i:\ A^{\sq e}\to A^e$ be the canonical injection. It follows from \equref{1.3.1}
that
\begin{eqnarray*}
&&\hspace*{-2cm}
(i\ot_{B\ot B^{\rm op}}i)(\gamma_{A^{\sq e}}(h))=\gamma_{A^e}(\Delta(h))\\
&=& \sum_{i,j} (l_i(h_{(1)})\ot r_j(h_{(2)}))\ot_{B^e}
(r_i(h_{(1)})\ot l_j(h_{(2)})),
\end{eqnarray*} and the statement is proved.
\end{proof}

\section{Cleft extensions and the lifting Theorem}\selabel{4}
In this Section, we adapt and review the results from \cite{MiSt}, going back to older results
from graded Clifford theory, see \cite{Dade}.

\subsection*{Cleft extensions}
\begin{proposition}\prlabel{4.1}
Let $H$ be a Hopf algebra, $A$ a right $H$-comodule algebra, and $B=A^{{\rm co}H}$.
We have a category $\Cc_A$, with two objects ${\bf 1}$ and ${\bf 2}$,
and morphisms
$$\Cc_A({\bf 1},{\bf 1})=\Hom(H,B)~~;~~\Cc_A({\bf 1},{\bf 2})=\Hom^H(H,A);$$
$$\Cc_A({\bf 2},{\bf 1})=\{u: H\to A~|~\rho(u(h))=u(h_{(2)})\ot S(h_{(1)}),~{\rm for~all~}h\in
H\};$$
$$\Cc_A({\bf 2},{\bf 2})=\{w: H\to A ~|~
 \rho(w(h))=w(h_{(2)})\ot S(h_{(1)})h_{(3)},~{\rm for~all~}h\in
H\}.$$
The composition of morphisms is given by the convolution product.
\end{proposition}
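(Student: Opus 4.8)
The plan is to verify directly the three category axioms: that the convolution product of two composable morphisms again lies in the appropriate hom-set, that each object carries an identity morphism, and that composition is associative. Write $f*g$ for the convolution product, $(f*g)(h)=f(h_{(1)})g(h_{(2)})$, and recall that since $A$ is a right $H$-comodule algebra the coaction $\rho:\ A\to A\ot H$ is an algebra map, so that $\rho(f(h_{(1)})g(h_{(2)}))=\rho(f(h_{(1)}))\rho(g(h_{(2)}))$ with the tensor-product algebra structure on $A\ot H$. With the diagrammatic convention that a morphism $X\to Y$ followed by $Y\to Z$ composes to $f*g$, there are eight composable pairs to examine, one for each path of length two through the objects $\mathbf 1$ and $\mathbf 2$.

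First I would dispose of the closure statements. The pair $\Cc_A(\mathbf 1,\mathbf 1)\times\Cc_A(\mathbf 1,\mathbf 1)\to\Cc_A(\mathbf 1,\mathbf 1)$ is immediate, since $B$ is a subalgebra of $A$ and the convolution of two maps into $B$ again takes values in $B$. The remaining seven pairs are all variations on a single computation; the prototype is $\Cc_A(\mathbf 1,\mathbf 2)\times\Cc_A(\mathbf 2,\mathbf 1)\to\Cc_A(\mathbf 1,\mathbf 1)$. Given $\psi\in\Cc_A(\mathbf 1,\mathbf 2)$ and $u\in\Cc_A(\mathbf 2,\mathbf 1)$, multiplicativity of $\rho$ together with the defining coaction formulas gives
\[
\rho\bigl((\psi*u)(h)\bigr)=\psi(h_{(1)})u(h_{(4)})\ot h_{(2)}S(h_{(3)}),
\]
and the antipode axiom $h_{(2)}S(h_{(3)})=\varepsilon(h_{(2)})1$ collapses this to $(\psi*u)(h)\ot 1$, so $(\psi*u)(h)\in A^{{\rm co}H}=B$ as required. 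Each of the other cases is obtained the same way: expand $\rho$ across the product, substitute the coaction law for each factor, reindex by coassociativity, and simplify the resulting $H$-component using one of the antipode identities $h_{(1)}S(h_{(2)})=\varepsilon(h)1=S(h_{(1)})h_{(2)}$.

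For the units I would take $u_0:=\eta_A\circ\varepsilon:\ h\mapsto\varepsilon(h)1_A$. A short check shows $u_0\in\Cc_A(\mathbf 1,\mathbf 1)$ (it maps into $B$) and also $u_0\in\Cc_A(\mathbf 2,\mathbf 2)$ (the condition $\rho(u_0(h))=u_0(h_{(2)})\ot S(h_{(1)})h_{(3)}$ reduces, via the antipode axiom, to $\varepsilon(h)(1\ot 1)$ on both sides). Since $u_0$ is the two-sided unit for the convolution product, it serves as $\Id_{\mathbf 1}$ and $\Id_{\mathbf 2}$ simultaneously. Associativity of composition is inherited from associativity of convolution, which follows from coassociativity of $\Delta$ and associativity of the multiplication of $A$; no further comodule input is needed here.

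The hard part will be purely organizational: keeping the eight closure checks straight and correctly bookkeeping the Sweedler indices when the two $S$-twisted coactions (those of $\Cc_A(\mathbf 2,\mathbf 1)$ and $\Cc_A(\mathbf 2,\mathbf 2)$) are convolved with the colinear coaction of $\Cc_A(\mathbf 1,\mathbf 2)$. In each mixed case one must verify that the twists $S(h_{(1)})$, $h_{(2)}$ and $S(h_{(1)})h_{(3)}$ recombine under the multiplication of $\rho$ precisely into the twist demanded by the target hom-set; the antipode axioms are exactly what make these recombinations work, and once the prototype computation above is in hand the rest are routine.
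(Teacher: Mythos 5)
Your verification is correct: the paper states Proposition \ref{pr:4.1} without proof, treating exactly this routine check as evident, and your argument supplies it in the standard way. Your prototype computation $\rho\bigl((\psi*u)(h)\bigr)=\psi(h_{(1)})u(h_{(4)})\ot h_{(2)}S(h_{(3)})=(\psi*u)(h)\ot 1$ is accurate; the remaining seven closure cases do collapse by the same mechanism (the three cases through the middle object ${\bf 1}$ need only $\rho(b)=b\ot 1$ for $b\in B$, while the four through ${\bf 2}$ each use one antipode identity), and $u_0=\eta_A\circ\varepsilon$ indeed lies in both $\Cc_A({\bf 1},{\bf 1})$ and $\Cc_A({\bf 2},{\bf 2})$ and is the convolution unit, so identities and associativity follow as you say, consistently with the diagrammatic composition convention, which matches the paper's later use of the category.
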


Recall that $A$ is called $H$-cleft if there exists a convolution invertible
$t\in \Hom^H(H,A)$, or, equivalently, if ${\bf 1}$ and ${\bf 2}$ are isomorphic
in $\Cc_A$. Then $t(1)^{-1}=u(1)$, and
$t'=u(1)t\in \Hom^H(H,A)$ has convolution inverse $ut(1)$, and $t'(1)=1$.
So if $A$ is $H$-cleft, then there exists a convolution invertible $t\in \Hom^H(H,A)$
with $t(1)=1$.\\
If $H$ is cocommutative, then $\Cc_A({\bf 1},{\bf 1})=\Cc_A({\bf 2},{\bf 2})$.\\
If $t\in \Hom^H(H,A)$ is an algebra map, then $t$ is convolution invertible
(with convolution inverse $t\circ S$), so $A$ is $H$-cleft. Consider the space
\[\Omega_A=\{t\in \Hom^H(H,A)~|~t~{\rm
is~an~algebra~map}\}.\]
We have the following equivalence
relation on $\Omega_A$: $t_1\sim t_2$ if and only if there exists $b\in U(B)$
such that $bt_1(h)=t_2(h)b$, for all $h\in H$. We denote
$\ol{\Omega}_A=\Omega_A/\sim$.\\
Take $t\in \Hom^H(H,A)$ with convolution inverse $u$ such that $t(1_H)=1_A$,
and consider the map
$$\omega_t:\ H\ot B\to B, \quad\omega_t(h\ot b)= t(h_{(1)})b u(h_{(2)}).$$
Assume that $\Omega_A\neq \emptyset$, and fix $t_0\in \Omega_A$
with convolution inverse $u_0$. Now
consider the bijection
$$F:\ \Cc_A({\bf 1},{\bf 1})=\Hom(H,B)\to \Cc_A({\bf 1},{\bf 2})=\Hom^H(H,A),$$
$F(v)=v*t_0$, $F^{-1}(t)=t*u_0$. It is then easy to show that $F(v)\in \Omega_A$
if and only if
\begin{equation}\eqlabel{4.2.0}
v(hk)=v(h_{(1)})\omega_{t_0}(h_{(2)}\ot v(k))
\end{equation}
and $v(1_H)=1_B$. If \equref{4.2.0} holds, then $v(1_H)=1_B$ if and only if
$v$ is convolution invertible. Moreover, $F(v)\sim t_0$ if and only if
$v(h)=\omega_{t_0}(h\ot b)b^{-1}$ for some invertible $b\in B$.\\
We will now discuss when $F^{-1}(\Omega_A)$ is a subgroup of $\Hom(H,B)$.

\begin{proposition}\prlabel{4.2}
Let $H$ be cocommutative, and let $A$ be an $H$-cleft right $H$-comodule algebra.
Assume that $B=A^{{\rm co}H}$ is commutative.
Choose $t\in \Hom^H(H,A)$ with convolution inverse $u$, such that $t(1)=1$ and,
a fortiori, $u(1_H)=1_A$.
Then we have the following properties.
\begin{enumerate}
\item $\omega_t$ is independent of the choice of $t$;
\item $ab=\omega_t(a_{[1]}\ot b)a_{[0]}$, for all $a\in A$ and $b\in
B$.
\end{enumerate}
\end{proposition}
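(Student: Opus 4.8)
\emph{Proposal.} The plan is to prove the two statements separately, in each case reducing everything to the defining identities $t*u=u*t=\eta\varepsilon$ of the convolution inverse, and exploiting that $B$ is commutative and $H$ cocommutative so that Sweedler factors may be permuted.

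For (1), let $t,t'\in\Hom^H(H,A)$ be two admissible choices, with convolution inverses $u,u'$ and $t(1)=t'(1)=1$ (so also $u(1)=u'(1)=1$). First I would link them: since $H$ is cocommutative, $c:=u*t'$ lies in $\Cc_A({\bf 1},{\bf 1})=\Hom(H,B)$; it is convolution invertible and satisfies $t*c=(t*u)*t'=t'$, whence $u'=c^{-1}*u$. Substituting into the definition of $\omega_{t'}$ gives
$$\omega_{t'}(h\ot b)=t(h_{(1)})\,c(h_{(2)})\,b\,c^{-1}(h_{(3)})\,u(h_{(4)}).$$
The three inner factors $c(h_{(2)})$, $b$, $c^{-1}(h_{(3)})$ all lie in the commutative algebra $B$, so they commute; reordering them and collapsing $c*c^{-1}=\eta\varepsilon$ leaves precisely $t(h_{(1)})\,b\,u(h_{(2)})=\omega_t(h\ot b)$. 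The only ingredient beyond bookkeeping is the commutativity of $B$.

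For (2), I would first treat the generator $a=t(h)$, which is where cocommutativity does the real work. Unwinding the definition,
$$\omega_t(h_{(2)}\ot b)\,t(h_{(1)})=t(h_{(2)})\,b\,u(h_{(3)})\,t(h_{(1)}).$$
Reading the right-hand side as one fixed $k$-linear map applied to the iterated coproduct $h_{(1)}\ot h_{(2)}\ot h_{(3)}$, and using that this tensor is symmetric because $H$ is cocommutative, I may re-index it cyclically and rewrite the expression as $t(h_{(1)})\,b\,u(h_{(2)})\,t(h_{(3)})$. Now the adjacent pair $u(h_{(2)})t(h_{(3)})$ collapses through $u*t=\eta\varepsilon$, leaving $t(h)\,b$. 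Thus the claimed identity $ab=\omega_t(a_{[1]}\ot b)\,a_{[0]}$ holds for $a=t(h)$.

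To reach an arbitrary $a$, I would invoke the normal basis property of cleft extensions: multiplication $B\ot H\to A$, $b'\ot h\mapsto b'\,t(h)$, is surjective. Since both sides of the identity in (2) are $k$-linear in $a$, it therefore suffices to verify it for $a=b'\,t(h)$ with $b'\in B$. For such $a$ one has $a_{[0]}\ot a_{[1]}=b'\,t(h_{(1)})\ot h_{(2)}$, so the right-hand side equals $\omega_t(h_{(2)}\ot b)\,b'\,t(h_{(1)})$; as $\omega_t(h_{(2)}\ot b)$ and $b'$ both lie in the commutative algebra $B$ they commute, and the special case then turns this into $b'\,t(h)\,b=ab$. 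The main obstacle I anticipate is not conceptual but notational: making the cocommutativity-driven re-indexing in the case $a=t(h)$ watertight, since the rearrangement takes place inside the noncommutative algebra $A$ and is legitimate only because the symmetric tensor $h_{(1)}\ot h_{(2)}\ot h_{(3)}$ is fed into one fixed $k$-linear map; one must also keep careful track of which moves are justified by commutativity of $B$ rather than by cocommutativity of $H$.
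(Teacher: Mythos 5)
Your proof is correct; note, however, that the paper states Proposition 4.2 without any proof (it is a standard fact from the theory of cleft extensions), so there is no in-paper argument to compare against, and your write-up should be judged as a self-contained proof --- which it essentially is. Two small points should be made explicit to make it watertight. First, in (1) you silently use not only $c=u*t'\in\Hom(H,B)$ but also $c^{-1}=u'*t\in\Hom(H,B)$ before commuting the three inner factors; both follow from the same cocommutativity computation, namely $\rho\bigl((u*t')(h)\bigr)=u(h_{(2)})t'(h_{(3)})\ot S(h_{(1)})h_{(4)}$, where symmetrizing the fourfold coproduct lets $S(h_{(3)})h_{(4)}$ collapse to $\varepsilon$. (Without cocommutativity only the opposite order $t'*u$ lands in $B$, so this is a genuine, and correctly flagged, use of the hypothesis --- your closing remark that commutativity of $B$ is the ``only ingredient'' slightly undersells this.) Second, rather than appealing to the normal basis property --- which in this paper appears only in Proposition 4.4, stated \emph{after} 4.2 and itself without proof --- the surjectivity of $B\ot H\to A$, $b'\ot h\mapsto b't(h)$, is a one-line verification inside your own framework: $a=\bigl(a_{[0]}u(a_{[1]})\bigr)t(a_{[2]})$ by $u*t=\eta\varepsilon$, and $a_{[0]}u(a_{[1]})\in B$ since $\rho\bigl(a_{[0]}u(a_{[1]})\bigr)=a_{[0]}u(a_{[3]})\ot a_{[1]}S(a_{[2]})=a_{[0]}u(a_{[1]})\ot 1$; this is exactly the paper's formula for $\phi^{-1}$ and removes any appearance of circularity. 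Finally, in the special case $a=t(h)$ of (2), a single transposition $h_{(1)}\leftrightarrow h_{(2)}$ of the symmetric tensor (moving the $\omega_t$-leg from the second to the first position) already reduces the claim to the standard computation $t(h_{(1)})\,b\,u(h_{(2)})\,t(h_{(3)})=t(h_{(1)})\,b\,(u*t)(h_{(2)})=t(h)b$, which needs no cocommutativity; your cyclic permutation is equivalent, but this phrasing isolates cleanly the unique place where cocommutativity enters part (2).
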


If $\Omega_A\neq \emptyset$, then we have an algebra map $t\in \Hom^H(H,A)$,
and then the map $\omega_t$ defines a left $H$-module algebra
structure on $B$, and we can consider the Sweedler
cohomology groups $H^n(H,B)$, see \cite{Sweedler}. We then denote
$h\cdot b=\omega_t(h\ot b)$.

\begin{proposition}\prlabel{4.3}
Assume that $\Omega_A\neq \emptyset$. Then $\Omega_A\cong Z^1(H,B)$ and
$\ol{\Omega}_A\cong H^1(H,B)$.
\end{proposition}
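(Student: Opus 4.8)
The plan is to use the bijection $F$ of \prref{4.2} (or rather its inverse $F^{-1}(t)=t*u_0$) to identify $\Omega_A$ with the group $Z^1(H,B)$ of Sweedler $1$-cocycles, and then to check that the equivalence relation $\sim$ becomes, under this identification, the relation of being cohomologous; passing to quotients then yields $\ol{\Omega}_A\cong H^1(H,B)$. Throughout I view $B$ as a left $H$-module algebra via $h\cdot b=\omega_{t_0}(h\ot b)$, and I recall that since $B$ is commutative and $H$ is cocommutative the convolution product on $\Hom(H,B)$ is commutative, so $Z^1(H,B)$ is an abelian group and $H^1(H,B)=Z^1(H,B)/B^1(H,B)$.

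For the first isomorphism I would write down the Sweedler $1$-cocycle condition: a convolution-invertible $f\in\Hom(H,B)$ is a $1$-cocycle iff $\sum(h_{(1)}\cdot f(k))f(h_{(2)})=f(hk)$, and --- again using that $B$ is commutative and $H$ cocommutative --- this is the same as $f(hk)=\sum f(h_{(1)})\,\omega_{t_0}(h_{(2)}\ot f(k))$, i.e. exactly \equref{4.2.0}. The discussion preceding the proposition already shows that $F(v)\in\Omega_A$ iff \equref{4.2.0} holds together with $v(1_H)=1_B$, and that, assuming \equref{4.2.0}, the normalization $v(1_H)=1_B$ is equivalent to convolution-invertibility of $v$. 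Hence $F^{-1}(\Omega_A)$ is precisely the set of convolution-invertible solutions of the cocycle identity, that is $F^{-1}(\Omega_A)=Z^1(H,B)$, and $F^{-1}$ is the desired bijection $\Omega_A\cong Z^1(H,B)$ (a subgroup of the group of convolution-invertible maps in $\Hom(H,B)$).

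For the second isomorphism, recall that $B^1(H,B)=\partial^0(U(B))$ with $\partial^0 b(h)=(h\cdot b)b^{-1}$. First note that $F^{-1}(t_0)=t_0*u_0=\varepsilon\,1_B$ is the neutral cocycle, and that the preamble already records $F(v)\sim t_0\iff v(h)=(h\cdot b)b^{-1}$ for some $b\in U(B)$, i.e. $\iff v\in B^1(H,B)$; thus the $\sim$-class of $t_0$ maps onto $B^1(H,B)$. The main step is to promote this to arbitrary classes: for $t_1,t_2\in\Omega_A$ with $v_i=F^{-1}(t_i)$, I would show $t_1\sim t_2\iff v_2*v_1^{-1}\in B^1(H,B)$. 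Starting from $b\,t_1(h)=t_2(h)\,b$ with $b\in U(B)$, one computes $v_2(h)=\sum t_2(h_{(1)})u_0(h_{(2)})=\sum b\,t_1(h_{(1)})\,b^{-1}u_0(h_{(2)})$ and uses the commutation rule $ac=(a_{[1]}\cdot c)a_{[0]}$ of \prref{4.2}(2), together with the colinearity $\rho(t_1(h))=t_1(h_{(1)})\ot h_{(2)}$, to move $b^{-1}$ to the left of $t_1(h_{(1)})$. Cocommutativity of $H$ (hence full symmetry of the iterated coproduct) and commutativity of $B$ then let one regroup the $B$-factors into $\partial^0(b^{-1})(h_{(1)})=b\,(h_{(1)}\cdot b^{-1})$ and the $A$-factors into $v_1(h_{(2)})$, giving $v_2=\partial^0(b^{-1})*v_1$. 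As this computation is reversible, $t_1\sim t_2$ holds precisely when $v_1$ and $v_2$ are cohomologous. Therefore $F^{-1}$ maps $\sim$-classes bijectively onto $B^1(H,B)$-cosets, and $\ol{\Omega}_A=\Omega_A/\sim\;\cong Z^1(H,B)/B^1(H,B)=H^1(H,B)$.

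I expect the last regrouping to be the only real obstacle: one must track the comodule structure carefully and invoke cocommutativity several times to turn the ``conjugation'' description of $\sim$ into convolution by a coboundary. Everything else is a matter of recalling the Sweedler differentials in degrees $0,1,2$ and quoting the identities established just before the statement, in particular \prref{4.2} and \leref{1.2}.
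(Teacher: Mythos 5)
Your proposal is correct and follows essentially the same route as the paper's (sketched) proof: you identify $F^{-1}(\Omega_A)$ with $Z^1(H,B)$ by rewriting \equref{4.2.0} as the Sweedler $1$-cocycle condition $v(hk)=(h_{(1)}\cdot v(k))v(h_{(2)})$, using commutativity of $B$ and cocommutativity of $H$, exactly as the paper does. The only difference is that you spell out the passage to $H^1(H,B)$ for arbitrary classes --- proving $t_1\sim t_2$ iff $v_2*v_1^{-1}\in B^1(H,B)$ by moving $b^{-1}$ past $t_1(h_{(1)})$ via \prref{4.2}(2) and the colinearity of $t_1$ --- a step the paper leaves implicit (it only records the case $t_2=t_0$ in the preamble), and your computation of it is sound.
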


\begin{proof} (sketch)
If $H$ is cocommutative and $B$ is commutative, then \equref{4.2.0} is equivalent to
$$v(hk)=(h_{(1)}\cdot v(k))v(h_{(2)}),$$
which is precisely the condition that $v$ is a Sweedler 1-cocycle.
\end{proof}

\begin{proposition}\prlabel{4.3b}
Now assume that $B=k$; it is not necessary that $H$ is cocommutative. If $\Omega_A\neq
\emptyset$, then $\Omega_A\cong {\rm Alg}(H,k)$.
\end{proposition}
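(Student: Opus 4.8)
The plan is to reduce everything to the bijection $F:\ \Hom(H,B)\to\Hom^H(H,A)$, $F(v)=v*t_0$, constructed in the discussion preceding \prref{4.2} for a fixed $t_0\in\Omega_A$; such a $t_0$ exists precisely because $\Omega_A\neq\emptyset$, which is our hypothesis. Since $F$ is a bijection of sets with inverse $t\mapsto t*u_0$ (where $u_0$ is the convolution inverse of $t_0$), it carries the subset $F^{-1}(\Omega_A)\subseteq\Hom(H,B)=\Hom(H,k)$ bijectively onto $\Omega_A$. So the whole task is to compute $F^{-1}(\Omega_A)$ and recognise it as $\Alg(H,k)$. For this I would invoke the criterion recorded just after the definition of $F$: $F(v)\in\Omega_A$ if and only if \equref{4.2.0} holds and $v(1_H)=1_B$.

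The only real computation is to evaluate the twisting map $\omega_{t_0}$ in the degenerate case $B=k$. Every $b\in B=k\cdot 1_A$ is a scalar multiple of the identity, hence central in $A$, so it may be pulled out of the convolution, and the convolution-inverse property $t_0*u_0=\varepsilon 1_A$ collapses the remaining factor:
$$\omega_{t_0}(h\ot b)=t_0(h_{(1)})\,b\,u_0(h_{(2)})=b\,t_0(h_{(1)})u_0(h_{(2)})=\varepsilon(h)\,b.$$
Thus $\omega_{t_0}$ is nothing but the trivial $H$-action on $k$, in sharp contrast with the twisted situation of \prref{4.3}.

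Substituting this into \equref{4.2.0} and using counitality, the defining condition for $v\in F^{-1}(\Omega_A)$ becomes $v(hk)=v(h_{(1)})\varepsilon(h_{(2)})v(k)=v(h)v(k)$, which together with the normalisation $v(1_H)=1_B=1$ says exactly that $v$ is a $k$-algebra map. Hence $F^{-1}(\Omega_A)=\Alg(H,k)$, and $F$ restricts to a bijection $\Alg(H,k)\to\Omega_A$, giving $\Omega_A\cong\Alg(H,k)$. If one wants the isomorphism to be one of groups, one transports across $F$ the natural group structure on $\Alg(H,k)$ (convolution, unit $\varepsilon$, inverse $v\mapsto v\circ S$, which is well defined since $k$ is commutative).

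I do not expect a serious obstacle here, and this is exactly why cocommutativity of $H$ is not needed: because $B=k$ is central, the Miyashita--Ulbrich-type action encoded in $\omega_{t_0}$ is forced to be trivial, so the $1$-cocycle condition \equref{4.2.0} degenerates to plain multiplicativity without any twisting to track. The single point demanding care is the scalar pull-out $t_0(h_{(1)})\,b\,u_0(h_{(2)})=b\,t_0(h_{(1)})u_0(h_{(2)})$ and the subsequent collapse via $t_0*u_0=\varepsilon 1_A$; once this is in place the identification of $F^{-1}(\Omega_A)$ with $\Alg(H,k)$ is immediate.
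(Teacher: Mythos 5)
Your proposal is correct and is essentially the paper's own proof: the paper likewise observes that for $B=k$ one has $\omega_t(h\ot b)=\varepsilon(h)b$ for every choice of $t$ (your scalar pull-out combined with $t_0*u_0=\varepsilon 1_A$), so that \equref{4.2.0} degenerates to $v(hk)=v(h)v(k)$ and the bijection $F$ identifies $F^{-1}(\Omega_A)$ with $\Alg(H,k)$. You merely spell out the details the paper leaves implicit, including the correct remark that the group structure (convolution, with inverse $v\mapsto v\circ S$, well defined since $k$ is commutative) is transported across $F$.
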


\begin{proof}
In this situation, $\omega_t(h\ot b)=\varepsilon(h)b$, for every choice of $t$. Then
\equref{4.2.0} is equivalent to $v(hk)=v(h)v(k)$, and the result follows.
\end{proof}

Suppose that $A$ is $H$-cleft. Pick a convolution invertible $t\in \Hom^H(H,A)$
such that $t(1)=1$. Then consider
$$\sigma:\ H\ot H\to B,~~\sigma(h\ot k)=t(h_{(1)})t(k_{(1)})u(h_{(2)}k_{(2)}).$$
Let $B\#_\sigma H$ be equal to $B\ot H$ as a vector space, with right $H$-coaction
$\rho=B\ot \Delta$, and with multiplication
$$(b\#h)(c\# k)= b(h\cdot c)\sigma(h_{(1)}\ot k_{(1)})h_{(2)}k_{(2)}.$$

\begin{proposition}\prlabel{4.4}
The map $\phi:\ B\#_\sigma H\to A$, $\phi(b\#h)=bt(h)$ is an isomorphism of
right $H$-comodule algebras. The inverse of $\phi$ is given by the formula
$\phi^{-1}(a)=a_{[0]}u(a_{[1]})\# a_{[2]}$.
Let $\sigma\in Z^2(H,B)$. The following statements are equivalent:
\begin{enumerate}
\item $\sigma\in B^2(H,B)$;
\item there exists an algebra map $t'\in \Hom^H(H,A)$;
\item $A\cong B\#_{\varepsilon\ot\varepsilon}H$.
\end{enumerate}
\end{proposition}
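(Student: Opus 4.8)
The plan is to treat the two assertions separately: first that $\phi$ is an isomorphism of right $H$-comodule algebras with the stated inverse, and then the chain of equivalences.

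For the first part I would check three things. That $\phi$ is multiplicative: starting from $\phi(b\# h)\phi(c\# k)=bt(h)ct(k)$, I would move $t(h)$ past $c$ using $t(h)c=(h_{(1)}\cdot c)t(h_{(2)})$ (an immediate consequence of the convolution identity $t*u=\eta_A\varepsilon$ and the definition $h\cdot c=t(h_{(1)})cu(h_{(2)})$), and then rewrite the product of two $t$'s by means of $t(h)t(k)=\sigma(h_{(1)}\ot k_{(1)})t(h_{(2)}k_{(2)})$, which follows from $u*t=\eta_A\varepsilon$ together with the definition of $\sigma$; the outcome is exactly $\phi$ applied to the crossed-product multiplication. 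That $\phi$ is $H$-colinear is immediate: $b\in B$ is coinvariant and $t\in\Hom^H(H,A)$, so $\rho(bt(h))=bt(h_{(1)})\ot h_{(2)}$, matching $\rho=B\ot\Delta$ on $B\#_\sigma H$. Finally, for bijectivity I would verify directly that $\psi(a):=a_{[0]}u(a_{[1]})\# a_{[2]}$ is a two-sided inverse: $\phi(\psi(a))=a_{[0]}u(a_{[1]})t(a_{[2]})=a_{[0]}\varepsilon(a_{[1]})=a$ by $u*t=\eta_A\varepsilon$ and the counit property, while $\psi(\phi(b\# h))=bt(h_{(1)})u(h_{(2)})\# h_{(3)}=b\# h$ by $t*u=\eta_A\varepsilon$. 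The one point needing a separate check is that $\psi$ lands in $B\#_\sigma H$, i.e. that $a_{[0]}u(a_{[1]})\in B$; this follows from the colinearity rule $\rho(u(h))=u(h_{(2)})\ot S(h_{(1)})$ for morphisms in $\Cc_A({\bf 2},{\bf 1})$ together with the antipode axiom.

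For the equivalences I would first dispose of $(2)\Leftrightarrow(3)$. If $t'\in\Hom^H(H,A)$ is an algebra map, then its associated cocycle is $t'(h_{(1)})t'(k_{(1)})u'(h_{(2)}k_{(2)})=(t'*u')(hk)=\varepsilon(h)\varepsilon(k)1_A$, hence trivial; since $\omega_{t'}=\omega_t$ by \prref{4.2}(1), the first part applied to $t'$ gives $A\cong B\#_{\varepsilon\ot\varepsilon}H$. Conversely, in $B\#_{\varepsilon\ot\varepsilon}H$ the map $h\mapsto 1\# h$ is colinear and multiplicative (because $h\cdot 1=\varepsilon(h)1$), and transporting it through an isomorphism $A\cong B\#_{\varepsilon\ot\varepsilon}H$ produces the required algebra map in $\Hom^H(H,A)$.

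The cohomological equivalence $(1)\Leftrightarrow(2)$ is the heart of the matter, and I expect the index bookkeeping there to be the main obstacle. The mechanism is that changing the cleaving map changes the cocycle by a coboundary. Given a second convolution-invertible $t'\in\Hom^H(H,A)$, I would set $\tau=t'*u$, check that $\tau$ takes values in $B$ (again from the colinearity of $t'$ and $u$ and the antipode axiom) and is convolution invertible, so that $t'=\tau*t$. A computation, using $t(h)b=(h_{(1)}\cdot b)t(h_{(2)})$ to move the $B$-valued factors to the left and the commutativity of $B$ to reorder them, should yield $\sigma'=(d^1\tau)*\sigma$ in the abelian group $Z^2(H,B)$, where $\sigma'$ is the cocycle of $t'$ and $d^1\tau$ is the Sweedler $2$-coboundary of $\tau$. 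Then $(2)\Rightarrow(1)$ follows because an algebra map $t'$ has $\sigma'=\varepsilon\ot\varepsilon$, whence $\sigma=(d^1\tau)^{-1}=d^1(\tau^{-1})\in B^2(H,B)$; and $(1)\Rightarrow(2)$ follows by reversing this, writing $\sigma=d^1(\tau^{-1})$ for a convolution-invertible $\tau\in\Hom(H,B)$, so that $t'=\tau*t$ is colinear, satisfies $\sigma'=\varepsilon\ot\varepsilon$, and hence (via $t'(h)t'(k)=\sigma'(h_{(1)}\ot k_{(1)})t'(h_{(2)}k_{(2)})=t'(hk)$) is an algebra map. The delicate point is pinning down the exact form of $d^1\tau$ and the ordering of the factors, and it is precisely the commutativity of $B$ that makes this identification clean.
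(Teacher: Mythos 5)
The paper states \prref{4.4} without any proof at all, treating it as the classical description of cleft extensions as crossed products together with Sweedler's cohomological classification of such products, so there is no in-paper argument to compare against; your proposal correctly supplies exactly the standard argument the paper implicitly invokes. All the key steps check out: multiplicativity of $\phi$ via the commutation rule $t(h)c=(h_{(1)}\cdot c)t(h_{(2)})$ and the identity $t(h)t(k)=\sigma(h_{(1)}\ot k_{(1)})t(h_{(2)}k_{(2)})$ (both consequences of $u*t=t*u=\eta_A\varepsilon$), the verification that $\psi(a)=a_{[0]}u(a_{[1]})\#a_{[2]}$ lands in $B\ot H$ via the twisted colinearity $\rho(u(h))=u(h_{(2)})\ot S(h_{(1)})$ (which holds because $u$ is the inverse of the isomorphism $t$ in $\Cc_A$, per \prref{4.1}), the two composition identities, the equivalence $(2)\Leftrightarrow(3)$ using $\omega_{t'}=\omega_t$ from \prref{4.2}(1), and the change-of-cleaving computation $\sigma'=(d^1\tau)*\sigma$ with $\tau=t'*u$ taking values in $B$. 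Two minor points are worth making explicit. First, when you reorder the factors in $\tau(h_{(1)})\bigl(h_{(2)}\cdot\tau(k_{(1)})\bigr)\sigma(h_{(3)}\ot k_{(2)})\tau^{-1}(h_{(4)}k_{(3)})$ to identify it with $\bigl((d^1\tau)*\sigma\bigr)(h\ot k)$, commutativity of $B$ lets you permute the factors but not the Sweedler legs attached to them; for that you also need cocommutativity of $H$, which is a standing hypothesis of this subsection (it is assumed in \prref{4.2} and is required for Sweedler cohomology to make sense anyway), so you should cite it alongside the commutativity of $B$. Second, in $(1)\Rightarrow(2)$ you verify $t'(hk)=t'(h)t'(k)$ but not the unit condition; this is harmless, since setting $h=k=1$ gives that $t'(1)=\tau(1)$ is an invertible idempotent of $A$, hence $t'(1)=1$ automatically, but a complete write-up should record this.
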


\subsection*{The Militaru-Stefan lifting Theorem}
Let $A$ be a faithfully flat $H$-Galois
extension of $B=A^{{\rm co}H}$. ${}_A\Mm^H$ will denote the
category of (left-right) relative Hopf modules. Let $P,Q\in
{}_A\Mm^H$. A left $A$-linear map $f:\ P\to Q$ is called {\it
rational} if there exists a (unique) element $f_{[0]}\ot
f_{[1]}\in {}_A\Hom(P,Q)\ot H$ such that
\begin{equation*}
f_{[0]}(p)\ot f_{[1]}=f(p_{[0]})_{[0]}\ot S^{-1}(p_{[1]})f(p_{[0]})_{[1]},
\end{equation*}
or, equivalently,
\begin{equation}\eqlabel{4.1.2}
\rho(f(p))=f_{[0]}(p_{[0]})\ot p_{[1]} f_{[1]},
\end{equation}
for all $p\in P$. The subset of ${}_A\Hom(P,Q)$ consisting of rational maps is denoted by
 ${}_A\HOM(P,Q)$. This is a right $H$-comodule, and ${}_A\END(P)^{\rm op}$ is a right
 $H$-comodule algebra.\\
 Now take $M\in {}_B\Mm$. Then $A\ot_B M\in {}_A\Mm^H$, and $E={}_A\END(A\ot_B M)^{\rm op}$ is a right
 $H$-comodule algebra. From the category equivalence between $ {}_B\Mm$ and ${}_A\Mm^H$,
 it follows that \[F:=E^{{\rm co}H}={}_A\End^H(A\ot_B M)^{\rm op}\cong {}_B\End(M)^{\rm
 op}.\]
 $B$ can be viewed as a right $H$-comodule algebra, with trivial coaction $\rho(b)=
 b\ot 1$, for all $b\in B$, so we can consider the category of relative Hopf modules
 ${}_B\Mm^H$. If $M$ is a left $B$-module, then $A\ot_BM$ and $M\ot H$ are objects
 of  ${}_B\Mm^H$. $\Dd_M$ will be the full subcategory of ${}_B\Mm^H$, with two
 objects $A\ot_BM$ and $M\ot H$. We then have the following result.

 \begin{theorem}\thlabel{4.5}
 Let $A$ be a faithfully flat $H$-Galois
extension of $B=A^{{\rm co}H}$ and $M\in {}_B\Mm$. Then the categories $\Cc_E$ and
$\Dd_M$ are anti-isomorphic.
\end{theorem}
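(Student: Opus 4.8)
The plan is to construct an explicit contravariant functor $\Phi\colon\Cc_E\to\Dd_M$ that restricts to a bijection on objects and on each morphism space, and that reverses composition (recall that composition in $\Cc_E$ is the convolution product, so $\Phi$ being an anti-isomorphism is the right notion). On objects I set $\Phi({\bf 1})=M\ot H$ and $\Phi({\bf 2})=A\ot_B M$. The first sanity check is that the two endomorphism algebras match: since $M\ot H$ is the cofree relative Hopf module on the $B$-module $M$, the cofree adjunction gives ${}_B\Mm^H(M\ot H,M\ot H)\cong\Hom(H,{}_B\End(M))$, which is exactly the underlying set of $\Cc_E({\bf 1},{\bf 1})=\Hom(H,F)$ under the identification $F=E^{{\rm co}H}\cong{}_B\End(M)^{\rm op}$ coming from the equivalence ${}_B\Mm\simeq{}_A\Mm^H$ of \thref{1.1}; the opposite in $F$ together with the convolution product turns this bijection into an anti-isomorphism of algebras. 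For the other diagonal space I send $w\in\Cc_E({\bf 2},{\bf 2})$ to the map $\hat w\colon A\ot_B M\to A\ot_B M$, $\hat w(p)=w(p_{[1]})(p_{[0]})$, where $p_{[0]}\ot p_{[1]}$ denotes the coaction of $A\ot_B M$.

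The heart of the definition is to check that these assignments land in the correct morphism spaces of $\Dd_M$, i.e.\ that the four coaction conditions of \prref{4.1} translate exactly into $H$-colinearity in ${}_B\Mm^H$. This is a direct computation using the characterization \equref{4.1.2} of the coaction on $E$, coassociativity of the coaction of $A\ot_B M$, and the antipode axioms. For $w\in\Cc_E({\bf 2},{\bf 2})$, for instance, one substitutes \equref{4.1.2} into $\rho(\hat w(p))$, rewrites the iterated coaction by coassociativity, applies the defining relation $\rho(w(h))=w(h_{(2)})\ot S(h_{(1)})h_{(3)}$, and collapses the factor $h_{(1)}S(h_{(2)})$ via the antipode to obtain $\rho(\hat w(p))=\hat w(p_{[0]})\ot p_{[1]}$; left $B$-linearity of $\hat w$ holds because $B=A^{{\rm co}H}$ is coinvariant and each $w(h)$ is left $A$-linear. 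The off-diagonal spaces work by the same mechanism once the antipode is inserted in the right place: to $u\in\Cc_E({\bf 2},{\bf 1})$ I associate $\check u\colon M\ot H\to A\ot_B M$, $\check u(m\ot h)=u(S^{-1}(h))(1\ot_B m)$, and a computation identical in spirit shows that the condition $\rho(u(h))=u(h_{(2)})\ot S(h_{(1)})$, after passing through $S^{-1}$ and using that $S^{-1}$ is anti-comultiplicative, is exactly what makes $\check u$ right $H$-colinear. The remaining space $\Cc_E({\bf 1},{\bf 2})=\Hom^H(H,E)$ is treated symmetrically, producing through the cofree adjunction a left $B$-linear, right $H$-colinear map $A\ot_B M\to M\ot H$ built from $t$ and the coaction.

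Next I would prove bijectivity of each of the four assignments, and this is where the faithfully flat $H$-Galois hypothesis is indispensable: the inverses are written down using the translation map $\gamma_A(h)=\sum_i l_i(h)\ot_B r_i(h)$ and its properties \equref{1.2.1}--\equref{1.2.7}. Concretely, from a morphism $g$ in $\Dd_M$ one recovers a function $H\to E$ by inserting the $l_i(h)$ and $r_i(h)$ so as to undo the evaluation-along-the-coaction that defines $\Phi$; relations \equref{1.2.3} and \equref{1.2.4} guarantee that the recovered function satisfies the appropriate coaction condition, \equref{1.2.5} and \equref{1.2.6} show that the two constructions are mutually inverse, and the fact that each recovered $w(h)$ is automatically rational follows once more from \equref{4.1.2}.

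Finally I would verify that $\Phi$ reverses composition. Composition in $\Cc_E$ is convolution, so for composable $f,g$ one must check $\Phi(g\circ f)=\Phi(f)\circ\Phi(g)$, where the left-hand composition is the convolution product; unwinding the definitions, this becomes a Sweedler-notation identity in which the comultiplication supplied by convolution is matched against the comultiplication produced by the coaction of $A\ot_B M$ when two of the maps $\hat w$, $\check u$ are composed. I expect this compatibility, together with keeping the four coaction conditions and the $S^{\pm1}$-placements simultaneously consistent across all four hom-spaces, to be the main obstacle; once the two diagonal cases are settled, the off-diagonal and mixed compositions follow by the same bookkeeping.
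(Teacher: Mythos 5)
Your proposal is correct and follows essentially the same route as the paper's own (sketched) proof: the same object assignment, the same evaluation-along-the-coaction formulas (your $\hat w$ is literally the paper's $\alpha_{\bf 22}$, and your cofreeness identification ${}_B\End^H(M\ot H)\cong \Hom(H,{}_B\End(M))$ is the paper's $\beta_2\circ\tilde\alpha_{\bf 11}$ built from the unit $\eta_M$ of the adjunction), with inverses written via the translation map $\gamma_A$ and the relations \equref{1.2.1}--\equref{1.2.7}, and with the same long well-definedness and anti-functoriality verifications deferred. The only deviation is cosmetic: you make the functor literally contravariant on the off-diagonal hom-sets by inserting $S^{-1}$ (your $\check u$ equals the paper's $\alpha_{\bf 12}(u\circ S^{-1})$, using exactly the bijection $u\mapsto u\circ S^{-1}$ between $\Cc_E({\bf 2},{\bf 1})$ and $\Cc_E({\bf 1},{\bf 2})$ that the paper itself exploits in \prref{4.11}), whereas the paper keeps the hom-set directions and absorbs the order reversal into the opposite multiplication of $E$ and the convolution product.
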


\begin{proof} (sketch)
We define a contravariant functor $\alpha:\ \Cc_E\to \Dd_M$ at the objects level in the
following obvious way: $\alpha({\bf 1})= M\ot H$ and $\alpha({\bf 2})=A\ot_BM$. Before
we state the definition at the morphisms level, we observe that we have
two natural isomorphisms
$$\beta_1:\  {}_B\Hom(A\ot_B M,M)\to {}_B\Hom^H(A\ot_B M, M\ot H);$$
$$\beta_2:\ {}_B\Hom(M\ot H,M)\to {}_B\End^H(M\ot H)$$
defined as follows:
$$\beta_1(\phi)(a\ot_B m)=\phi(a_{[0]}\ot_B m)\ot a_{[1]}~~;~~
\beta_1^{-1}(\varphi)=(M\ot\varepsilon)\circ\varphi;$$
$$\beta_2(\Theta)(m\ot h)=
\Theta(m\ot h_{(1)})\ot h_{(2)}~~;~~
\beta_2^{-1}(\theta)=(M\ot\varepsilon)\circ\theta.$$
Consider $\eta_M:\ M\to (A\ot_BM)^{{\rm co}H}$, the unit of the adjunction $(F_2,G_2)$
(see \seref{1}) evaluated at $M$. Since $F_2$ is an equivalence of categories, $\eta_M$
is an isomorphism. We have an isomorphism
$$\tilde{\alpha}_{\bf 11}:\ \Cc_E({\bf 1},{\bf 1})=\Hom(H,E^{{\rm co}H})\to {}_B\Hom(M\ot H,M),$$
given by the formulas
\begin{eqnarray*}
&&\tilde{\alpha}_{\bf 11}(v)(m\ot h)=\eta_M^{-1}(v(h)(1\ot_B m));\\
&&\tilde{\alpha}_{\bf 11}^{-1}(\Theta)(h)(a\ot_B m)=a\ot_B \Theta(m\ot h).
\end{eqnarray*}
We then define $\alpha_{\bf 11}=\beta_2\circ \tilde{\alpha}_{\bf 11}$. The isomorphism
 \[\alpha_{\bf 12}:\ \Cc_E({\bf 1},{\bf 1})=\Hom^H(H,E)\to {}_B\Hom^H(M\ot
H,A\ot_B M)\]
is given by the formulas
$$\alpha_{\bf 12}(t)(m\ot h)=t(h)(1\ot_Bm)~~;~~(\alpha_{\bf 12}^{-1}(\psi)(h))(a\ot_B m)=
a\psi(m\ot h).$$
We have an isomorphism
$$\tilde{\alpha}_{\bf 21}:\ \Cc_E({\bf 2},{\bf 1})\to {}_B\Hom(A\ot_BM,M),$$
given by the formulas
\begin{eqnarray*}
&&\tilde{\alpha}_{\bf 21}(u)(a\ot_B m)=\eta_M^{-1}(u(a_{[1]})(a_{[0]}\ot_Bm));\\
&& \bigl(\tilde{\alpha}_{\bf 21}^{-1}(\phi)(h)\bigr)(a\ot_B m)= \sum_i
al_i(h)\ot_B \phi(r_i(h)\ot_B m).
\end{eqnarray*}
We then define $\alpha_{\bf 21}=\beta_1\circ \tilde{\alpha}_{\bf 21}$.
Finally, the isomorphism
$$\alpha_{\bf 22}:\ \Cc_E({\bf 2},{\bf 1})\to {}_B\End^H(A\ot_BM)^{\rm op}$$
is given by the formulas
\begin{eqnarray*}
&&\alpha_{\bf 22}(w)(a\ot_B m)=w(a_{[1]})(a_{[0]}\ot_B m);\\
&&(\alpha_{\bf 22}^{-1}(\kappa))(h)(a\ot_B m)=
\sum_i al_i(h)\kappa(r_i(h)\ot_B m).
\end{eqnarray*}
A long computation shows that $\alpha_{\bf 22}$ is a well-defined isomorphism,
and that $\alpha$ is a functor.
\end{proof}

Recall from \cite{Schneider1} that $M\in {}_B\Mm$ is called $H$-stable if $A\ot_B M$ and $M\ot H$ are
isomorphic as left $B$-modules and right $H$-comodules, or, equivalently,
the two objects of $\Dd_M$ are isomorphic.
From \thref{4.5}
we immediately deduce the following result.

\begin{corollary}\colabel{4.10}
$M\in {}_B\Mm$ is $H$-stable if and only if there exists a convolution invertible
$t\in \Hom^H(H,E)$.
\end{corollary}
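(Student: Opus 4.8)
The plan is to obtain this as an immediate consequence of \thref{4.5}, since both sides of the claimed equivalence are just reformulations of the statement that the two distinguished objects of a category are isomorphic.

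First I would rewrite the left-hand condition. By the definition recalled just before the statement, $M\in {}_B\Mm$ is $H$-stable precisely when the two objects $A\ot_B M$ and $M\ot H$ of $\Dd_M$ are isomorphic in ${}_B\Mm^H$. Since $\Dd_M$ is a \emph{full} subcategory of ${}_B\Mm^H$ whose objects are exactly these two, an isomorphism between them in the ambient category is the same as an isomorphism inside $\Dd_M$. Hence $H$-stability of $M$ is equivalent to $A\ot_B M\cong M\ot H$ in $\Dd_M$. Next I would rewrite the right-hand condition by applying to $E={}_A\END(A\ot_B M)^{\rm op}$ the characterization of cleftness recalled after \prref{4.1}: there exists a convolution invertible $t\in \Hom^H(H,E)$ if and only if $E$ is $H$-cleft, and this in turn holds if and only if the two objects ${\bf 1}$ and ${\bf 2}$ of $\Cc_E$ are isomorphic.

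The bridge between these two reformulations is \thref{4.5}: the functor $\alpha:\ \Cc_E\to \Dd_M$ is an anti-isomorphism of categories with $\alpha({\bf 1})=M\ot H$ and $\alpha({\bf 2})=A\ot_B M$. Being an anti-isomorphism, $\alpha$ is a bijection on morphisms that reverses composition, so it carries invertible morphisms to invertible morphisms and conversely; consequently ${\bf 1}\cong {\bf 2}$ in $\Cc_E$ if and only if their images $A\ot_B M$ and $M\ot H$ are isomorphic in $\Dd_M$. Chaining the three equivalences
$$M\ \text{is}\ H\text{-stable}\iff A\ot_B M\cong M\ot H\ \text{in}\ \Dd_M\iff {\bf 1}\cong {\bf 2}\ \text{in}\ \Cc_E\iff \exists\,\text{conv.\ invertible}\ t\in\Hom^H(H,E)$$
yields the corollary.

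I do not expect any real obstacle beyond bookkeeping; the single point deserving a moment's care is the contravariance of $\alpha$. An isomorphism $f:\ {\bf 1}\to {\bf 2}$ in $\Cc_E$ is sent to an isomorphism $\alpha(f):\ A\ot_B M\to M\ot H$ with the arrow reversed, but this only changes the direction of the comparison map and not the \emph{existence} of an isomorphism between the two objects, which is all that both conditions require. Thus the reversal of direction is harmless and the equivalence goes through.
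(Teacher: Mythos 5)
Your proposal is correct and is precisely the paper's argument: the paper deduces \coref{4.10} immediately from \thref{4.5}, using the same chain of equivalences ($H$-stability $\Leftrightarrow$ the two objects of $\Dd_M$ are isomorphic $\Leftrightarrow$ ${\bf 1}\cong{\bf 2}$ in $\Cc_E$ $\Leftrightarrow$ $E$ is $H$-cleft, i.e.\ a convolution invertible $t\in\Hom^H(H,E)$ exists). Your remark that the contravariance of $\alpha$ is harmless, since only the existence of an isomorphism is at stake, is exactly the right bookkeeping point.
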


 Assume that $M\in {}_B\Mm$ is $H$-stable. Then there is an isomorphism
$\varphi:\ A\ot_B M\to M\ot H$ in ${}_B\Mm^H$. Let $\psi=\varphi^{-1}$,
$\phi=(M\ot\varepsilon)\circ \varphi$, $t=\alpha_{\bf 12}^{-1}(\psi)$, $u=\alpha_{\bf 21}^{-1}(\varphi)$.
Then the following assertions are equivalent.
\begin{enumerate}
\item $t(1)=1$;
\item $u(1)=1$;
\item $\psi(m\ot 1)=1\ot_B m$, for all $m\in M$;
\item $\phi(1\ot_B m)=m$, for all $m\in M$.
\end{enumerate}
Indeed, the equivalences
$1) \Longleftrightarrow 2)$ and $3) \Longleftrightarrow 4)$ are obvious, and
$1) \Longleftrightarrow 3)$ follows immediately from the definition of $\alpha_{\bf 12}$ and
$\alpha_{\bf 21}^{-1}$.\\
We have seen (cf. comments following \prref{4.1}) that $t'$ and $u'$ given by
$t'(h)=t(h)\circ u(1)$ and $u'(h)=t(1)\circ u(h)$
are convolution inverses, satisfying the additional condition $t'(1)=u'(1)=1$. Thus
$\psi'=\alpha_1(t')$ satisfies (3), and $\phi'=\tilde{\alpha}_2(u')$ satisfies (4).
$\psi'$ and $\phi'$ can be computed explicitly, using the formulas given in the proof of
\thref{4.5}:
$$\psi'(m\ot h)=\psi(\phi(1\ot_B m)\ot h)~~;~~
1\ot_B \phi'(a\ot_B m)=\psi(\phi(a\ot_B m)\ot 1).$$ $\psi'$ and $\varphi'$ are composition
inverses. The proof of the following result is now a straightforward exercise.

\begin{proposition}\prlabel{4.11}
Take $\phi\in {}_B\Hom(A\ot_BM,M)$, and let
$u=\tilde{\alpha}_{\bf 21}^{-1}(\phi) \in \Cc({\bf 2},{\bf 1})$ and $t=u\circ
S^{-1}\in \Cc({\bf 1},{\bf 2})=\Hom^H(H,E)$. Then the following statements are
equivalent:
\begin{enumerate}
\item $\phi:\ A\ot_B M\to M$, $\phi(a\ot_B m)=a\cdot m$ is an associative left $A$-action on $M$;
\item $u$ is an anti-algebra map;
\item $t$ is an algebra map.
\end{enumerate}
\end{proposition}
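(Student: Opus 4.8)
The plan is to establish $2)\Leftrightarrow 3)$ by a formal antipode argument and to concentrate the Hopf--Galois content in $1)\Leftrightarrow 2)$. For $2)\Leftrightarrow 3)$, recall that $t=u\circ S^{-1}$ and that $S^{-1}$ is anti-multiplicative with $S^{-1}(1_H)=1_H$. Hence $t(hk)=u\bigl(S^{-1}(k)S^{-1}(h)\bigr)$, so if $u$ is an anti-algebra map into $E$ then $t$ is an algebra map, and conversely; unitality crosses over because $S^{-1}(1_H)=1_H$. The colinearity that makes $t$ lie in $\Hom^H(H,E)=\Cc_E({\bf 1},{\bf 2})$ is built into the statement, and follows from $u\in\Cc_E({\bf 2},{\bf 1})$ together with the fact that $S^{-1}$ reverses $\Delta$.

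For $1)\Leftrightarrow 2)$, I would compute with the explicit formula $u(h)(a\ot_B m)=\sum_i al_i(h)\ot_B\phi(r_i(h)\ot_B m)$ defining $u=\tilde{\alpha}_{\bf 21}^{-1}(\phi)$. Because $E$ carries the opposite multiplication, ``$u$ is an anti-algebra map'' unwinds to $u(hk)=u(h)\circ u(k)$ (ordinary composition) together with $u(1_H)=\Id$. Expanding the composite gives $\sum_{i,j}al_i(k)l_j(h)\ot_B\phi\bigl(r_j(h)\ot_B\phi(r_i(k)\ot_B m)\bigr)$, whereas expanding $u(hk)$ by the multiplicativity rule \equref{1.2.7} for $\gamma_A$ gives $\sum_{i,j}al_i(k)l_j(h)\ot_B\phi\bigl(r_j(h)r_i(k)\ot_B m\bigr)$. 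The two therefore differ exactly by the replacement of $\phi(r_j(h)r_i(k)\ot_B m)$ with $\phi\bigl(r_j(h)\ot_B\phi(r_i(k)\ot_B m)\bigr)$, so associativity of $\phi$ immediately yields $1)\Rightarrow 2)$. For the unit, $\gamma_A(1_H)=1\ot_B 1$ forces $u(1_H)(a\ot_B m)=a\ot_B\phi(1\ot_B m)$, which is the identity precisely when $\phi(1\ot_B m)=m$; this is the compatibility $b\cdot m=bm$ for $b\in B$, using left $B$-linearity of $\phi$.

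The converse $2)\Rightarrow 1)$ is the main obstacle, as one must recover the pointwise law $(a'a'')\cdot m=a'\cdot(a''\cdot m)$ from the $\gamma_A$-twisted relation above. The device is a reconstruction formula: applying the $B$-balanced map $x\ot_B y\mapsto x\ot_B\phi(y\ot_B m)$ to \equref{1.2.6} gives $1\ot_B\phi(a\ot_B m)=\sum a_{[0]}\cdot u(a_{[1]})(1\ot_B m)$, where $\cdot$ is left multiplication on the $A$-factor. Applying this to $\phi\bigl(a'\ot_B\phi(a''\ot_B m)\bigr)$, then using that each $u(h)$ is left $A$-linear (so it commutes with $a''_{[0]}\cdot$), the comodule-algebra identity $\rho(a'a'')=\rho(a')\rho(a'')$, and the multiplicativity $u(a'_{[1]})\circ u(a''_{[1]})=u(a'_{[1]}a''_{[1]})$ from $2)$, I would collapse everything to $1\ot_B\phi(a'a''\ot_B m)$, obtaining $1\ot_B\phi\bigl(a'\ot_B\phi(a''\ot_B m)\bigr)=1\ot_B\phi(a'a''\ot_B m)$. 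Finally I would cancel the leading $1\ot_B(-)$: by \thref{1.1} the extension $A$ is faithfully flat as a right $B$-module, so the functor $A\ot_B-$ is faithful and exact; applying it to the unit map $M\to A\ot_B M$, $m\mapsto 1\ot_B m$, yields a split injection, the retraction being induced by the multiplication $A\ot_B A\to A$, whence the unit itself is injective. This gives associativity and finishes $2)\Rightarrow 1)$. I expect the only delicate points to be the double application of the reconstruction formula and the correct use of $A$-linearity and the comodule-algebra axiom.
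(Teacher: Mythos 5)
Your proposal is correct, and it follows exactly the route the paper intends: the paper dispenses with \prref{4.11} as ``a straightforward exercise'' after \thref{4.5}, and your verification --- the formal antipode argument for $2)\Leftrightarrow 3)$, the expansion of $u(hk)$ versus $u(h)\circ u(k)$ via \equref{1.2.7} for $1)\Rightarrow 2)$, and the reconstruction identity $u(a_{[1]})(a_{[0]}\ot_B m)=1\ot_B\phi(a\ot_B m)$ from \equref{1.2.6} combined with left $A$-linearity of $u(h)$ and injectivity of $m\mapsto 1\ot_B m$ (faithful flatness) for $2)\Rightarrow 1)$ --- is precisely the computation the formulas in the proof of \thref{4.5} are set up to make routine. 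All steps check out, including the correct unwinding of ``anti-algebra map'' into ordinary composition under the opposite multiplication on $E$, and the treatment of unitality via $\gamma_A(1_H)=1\ot_B 1$.
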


\begin{proposition}\prlabel{4.12}
For $i=1,2$, take $\phi_i\in {}_B\Hom(A\ot_B M,M)$, and consider
$u_i=\tilde{\alpha}_{\bf 21}^{-1}(\phi_i)
\in \Hom^S(H,E)$ and $t_i=u_i\circ S^{-1}\in \Hom^H(H,E)$.
Let $M_i=M$ as a left $B$-module, with left $A$-action defined by $\phi_i$.
Then $M_1\cong M_2$ if and only if $t_1\sim t_2$.
\end{proposition}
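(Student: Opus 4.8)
The plan is to show that both sides of the claimed equivalence are governed by the same datum --- a $B$-module automorphism of $M$ --- and then to match the two conditions through the explicit formulas underlying \thref{4.5}. First I would unwind both statements. Since $B=A^{{\rm co}H}$ is a subalgebra of $A$, every $A$-linear map is automatically $B$-linear, and $M_1,M_2$ have the same underlying left $B$-module $M$; hence an isomorphism of left $A$-modules $M_2\to M_1$ is precisely a $B$-module automorphism $f$ of $M$ that is $A$-linear, i.e. $f(\phi_2(a\ot_B m))=\phi_1(a\ot_B f(m))$ for all $a\in A$, $m\in M$. On the other side, $t_1\sim t_2$ means that there is $b\in U(F)$, where $F=E^{{\rm co}H}\cong {}_B\End(M)^{\rm op}$, with $bt_1(h)=t_2(h)b$ in $E$ for all $h\in H$. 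Under the isomorphism $F\cong {}_B\End(M)^{\rm op}$, the invertible elements of $F$ correspond bijectively to the $B$-module automorphisms $f$ of $M$, via $f\mapsto b_f$, where $b_f$ is the relative Hopf module automorphism $a\ot_B m\mapsto a\ot_B f(m)$ of $A\ot_B M$. Thus both conditions range over the same set of automorphisms $f$, and it suffices to prove, for a fixed $B$-automorphism $f$ of $M$ with $b=b_f$, that $b\,t_1(h)=t_2(h)\,b$ holds in $E$ for every $h\in H$ if and only if $\phi_1(a\ot_B f(m))=f(\phi_2(a\ot_B m))$ for all $a\in A$, $m\in M$.

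Next I would translate the left-hand relation explicitly. Keeping in mind that the product in $E={}_A\END(A\ot_B M)^{\rm op}$ is the opposite of composition, the identity $bt_1(h)=t_2(h)b$ reads $t_1(h)\circ b_f=b_f\circ t_2(h)$ as endomorphisms of $A\ot_B M$. Using $t_i=u_i\circ S^{-1}$ together with the formula $u_i(k)(a\ot_B m)=\sum_j al_j(k)\ot_B\phi_i(r_j(k)\ot_B m)$ from the proof of \thref{4.5}, and evaluating $b_f$ on $a\ot_B m\mapsto a\ot_B f(m)$, I would rewrite the relation, after substituting $h=S(k)$ (so that $k=S^{-1}(h)$ ranges over all of $H$ as $S$ is bijective), in the form
$$\sum_j al_j(k)\ot_B\phi_1(r_j(k)\ot_B f(m))=\sum_j al_j(k)\ot_B f\bigl(\phi_2(r_j(k)\ot_B m)\bigr),$$
required for all $a\in A$, $m\in M$ and $k\in H$.

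Finally I would invoke the bijectivity of $\tilde{\alpha}_{\bf 21}$ to dispose of the apparent ``spanning'' difficulty. Writing $\psi_1=\phi_1\circ(A\ot_B f)$ and $\psi_2=f\circ\phi_2$, both elements of ${}_B\Hom(A\ot_B M,M)$, the two sides above are exactly $\bigl(\tilde{\alpha}_{\bf 21}^{-1}(\psi_1)(k)\bigr)(a\ot_B m)$ and $\bigl(\tilde{\alpha}_{\bf 21}^{-1}(\psi_2)(k)\bigr)(a\ot_B m)$. Since $\tilde{\alpha}_{\bf 21}^{-1}$ is a bijection --- being one of the component isomorphisms of the anti-isomorphism $\Cc_E\cong\Dd_M$ of \thref{4.5} --- the displayed identity, for all $a,m,k$, is equivalent to $\psi_1=\psi_2$, that is $\phi_1\circ(A\ot_B f)=f\circ\phi_2$, which is precisely the $A$-linearity of $f:M_2\to M_1$. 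Combined with the reductions of the first paragraph, and the fact that $f$ is invertible exactly when $b_f\in U(F)$, this yields $M_1\cong M_2\iff t_1\sim t_2$. The only delicate point is the bookkeeping in the second paragraph: one must correctly account for the opposite multiplication in $E$ together with the $S^{-1}$-twist, so that the relation collapses to the symmetric form displayed above. Once this is arranged, the bijectivity of $\tilde{\alpha}_{\bf 21}$ makes the passage to the intertwining condition immediate, and no separate density or faithful-flatness argument is needed at this stage.
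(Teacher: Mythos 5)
Your proof is correct and takes essentially the same route as the paper: both arguments reduce $t_1\sim t_2$ to the intertwining relation $u_1(h)\circ(A\ot_B f)=(A\ot_B f)\circ u_2(h)$ for an invertible $f\in{}_B\End(M)\cong E^{{\rm co}H}$, and then identify this with the $A$-linearity $\phi_1\circ(A\ot_B f)=f\circ\phi_2$ using the explicit formulas for $\tilde{\alpha}_{\bf 21}$ and $\tilde{\alpha}_{\bf 21}^{-1}$ from \thref{4.5}. Your appeal to the injectivity of $\tilde{\alpha}_{\bf 21}^{-1}$ is merely a repackaging of the paper's forward computation, which in effect applies $\tilde{\alpha}_{\bf 21}$ by evaluating the relation at $h=a_{[1]}$ on $a_{[0]}\ot_B m$ and invoking \equref{1.2.6}.
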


\begin{proof} We have that
$t_1\sim t_2$ if and only if there exists an invertible map $f\in
{}_B\End(M)\cong E^{{\rm co}H}$ such that $t_1(h)\circ (A\ot_B
f)= (A\ot_B f)\circ t_2(h)$, or, equivalently, $u_1(h)\circ
(A\ot_B f)= (A\ot_B f)\circ u_2(h)$, for all $h\in H$. This
implies that
\begin{eqnarray*}
&&\hspace*{-2cm}
1\ot_B \phi_1(a\ot_B f(m))=u_1(a_{[1]})(a_{[0]}\ot_B f(m))\\
&=& (u_1(a_{[1]})\circ (A\ot_B f))(a_{[0]}\ot_B m)\\
&=&((A\ot_B f)\circ u_2(a_{[1]}))(a_{[0]}\ot_B m)\\
&=& 1\ot_B f(\phi_2(a\ot_B f(m))),
\end{eqnarray*}
and $\phi_1(a\ot_B f(m))=f(\phi_2(a\ot_B f(m)))$, for all $a\in A$ and $m\in M$,
which means that $f:\ M_2\to M_1$ is an isomorphism of left $A$-modules.\\
Conversely, let $f:\ M_2\to M_1$ is an isomorphism of left $A$-modules.
Then $f:\ M\to M$ is left $B$-linear, so $f\in {}_B\End(M)$. Then we have,
for all $h\in H$, $a\in A$ and $m\in M$, that
\begin{eqnarray*}
&&\hspace*{-1cm}
u_1(h)(a\ot_B f(m))=\sum_i al_i(h)\ot_B \phi_1(r_i(h)\ot_B f(m))\\
&=&\sum_i al_i(h)\ot_B f(\phi_2(r_i(h)\ot_B m))
= (A\ot_B f)(u_2(h)(a\ot_B m)),
\end{eqnarray*}
hence $u_1(h)\circ (A\ot_B f)=(A\ot_B f)\circ u_2(h)$, as needed.
\end{proof}

As an immediate consequence, we obtain the Militaru-\c Stefan lifting Theorem.

\begin{corollary}\colabel{4.13}
Let $A$ be a faithfully flat $H$-Galois extension of $B=A^{{\rm co}H}$
and $M\in {}_B\Mm$.
There is a bijective correspondence between the isomorphism classes of left $A$-module
structures on $M$ extending the $B$-module structure on $M$ and the elements
of $\ol{\Omega}_E$.
\end{corollary}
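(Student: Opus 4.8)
The plan is to combine the two preceding results---Proposition~\ref{pr:4.12} and Corollary~\ref{co:4.10}---so that the desired bijection falls out almost formally, once everything is phrased in terms of the category $\Cc_E$. The target objects are the isomorphism classes of left $A$-module structures on $M$ that restrict to the given $B$-module structure, and the claim is that these are parametrised by $\ol{\Omega}_E=\Omega_E/\!\sim$.

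First I would record what it means for $M$ to admit such an $A$-module structure at all. A left $A$-action on $M$ extending the $B$-action is exactly a map $\phi\in{}_B\Hom(A\ot_B M,M)$, $\phi(a\ot_B m)=a\cdot m$, that is associative and unital. By Proposition~\ref{pr:4.11}, passing through the isomorphism $\tilde{\alpha}_{\bf 21}^{-1}$ and then composing with $S^{-1}$, such a $\phi$ corresponds precisely to an algebra map $t\in\Hom^H(H,E)$, i.e.\ to an element of $\Omega_E$. So I would first set up the assignment $\phi\mapsto t=\tilde{\alpha}_{\bf 21}^{-1}(\phi)\circ S^{-1}$ and invoke Proposition~\ref{pr:4.11} to see that it restricts to a bijection between associative unital $A$-actions on $M$ and the set $\Omega_E$. (Here $\Omega_E\neq\emptyset$ is equivalent, via Corollary~\ref{co:4.10}, to $M$ being $H$-stable, so the statement is vacuous unless $M$ is $H$-stable; I would note this to dispose of the degenerate case.)

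Next I would descend to isomorphism classes on both sides. Two $A$-module structures $\phi_1,\phi_2$ give isomorphic $A$-modules $M_1\cong M_2$ (by a map that is necessarily $B$-linear, since it must respect the common underlying $B$-structure) if and only if the corresponding algebra maps satisfy $t_1\sim t_2$; this is exactly the content of Proposition~\ref{pr:4.12}. Thus the bijection $\phi\mapsto t$ of the previous step carries the equivalence ``isomorphic as $A$-modules'' on the left to the equivalence relation $\sim$ on $\Omega_E$ on the right. Quotienting both sides therefore yields a bijection between isomorphism classes of extending $A$-module structures and $\Omega_E/\!\sim=\ol{\Omega}_E$, which is the assertion.

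The only genuine content that is not already packaged in the two quoted propositions is the matching of equivalence relations, and this is where I would be most careful: I must check that an $A$-module isomorphism $M_1\to M_2$ is automatically $B$-linear (immediate, as it intertwines the restricted actions, which coincide with the fixed $B$-action) so that it does land in ${}_B\End(M)\cong E^{{\rm co}H}=U(F)$, the invertible elements indexing $\sim$. Granting that, the equivalence $M_1\cong M_2\Leftrightarrow t_1\sim t_2$ is verbatim Proposition~\ref{pr:4.12}, and no further computation is needed. The \emph{main obstacle}, such as it is, is purely bookkeeping: keeping straight the three compatible bijections $\phi\leftrightarrow u\leftrightarrow t$ (via $\tilde{\alpha}_{\bf 21}^{-1}$ and $\circ\,S^{-1}$) and confirming that ``unital'' on the $\phi$-side corresponds to the normalisation $t(1)=1$ used in the definition of $\Omega_E$, so that the two sets being quotiented really are $\Omega_E$ and the associative unital actions, not merely the non-unital ones.
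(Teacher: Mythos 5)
Your proposal is correct and follows exactly the paper's route: the paper presents \coref{4.13} as an immediate consequence of \prref{4.11} (the correspondence $\phi\leftrightarrow u\leftrightarrow t$ identifying extending $A$-actions with $\Omega_E$) and \prref{4.12} ($M_1\cong M_2$ if and only if $t_1\sim t_2$), which is precisely your two-step argument. Your extra care about the degenerate case via \coref{4.10}, the automatic $B$-linearity of $A$-module isomorphisms, and the normalisation $t(1)=1$ only makes explicit bookkeeping that the paper leaves implicit.
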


\begin{example}\exlabel{4.14}
Let $A$ be an $H$-Galois object, that is, $A^{{\rm co}H}=k$, and $M=k$.
Then $E={}_A\End(A)^{\rm op}\cong A$ as an $H$-comodule algebra,
and $E^{{\rm co}H}=A^{{\rm co}H}=k$. The map $\tilde{\alpha}_{\bf 21}:\
\Cc_A({\bf 1},{\bf 2})$ and its inverse are given by the formulas
$$\tilde{\alpha}_{\bf 21}(u)(a)=u(a_{[1]})a_{[0]}\in A^{{\rm co}H}=k;$$
$$\tilde{\alpha}_{\bf 21}^{-1}(\phi)(h)=\sum_i l_i(h)\phi(r_i(h)).$$
$\phi\in A^*$ defines an $A$-action $A\ot k\to k$ if and only if $\phi$ is
an algebra map. It follows from \coref{4.13} that $\ol{\Omega}_A={\rm Alg}(A,k)$.
If $\ol{\Omega}_A\neq \emptyset$, then it follows from \prref{4.3b} that
${\rm Alg}(A,k)\cong {\rm Alg}(H,k)$. The correspondence goes as follows.
Fix $\phi_0\in {\rm Alg}(A,k)$. $\phi\in {\rm Alg}(A,k)$ corresponding to
$v\in {\rm Alg}(H,k)$ is given by the formula
$$\phi(a)=v(S(a_{[1]}))l_i(a_{[2]})\phi_0(r_i(a_{[2]}))a_{[0]}.$$
\end{example}

\section{Picard groups}\selabel{5}
\subsection*{The Picard group of an $H$-comodule algebra}
Consider a  Hopf algebra $H$ with bijective antipode and an $H$-comodule
algebra $A$. Let $\dul{\Pic}^H(A)$ be the category with strict
$H$-Morita contexts of the form $(A,A,P,Q,\alpha,\beta)$ as
objects. A morphism between $(A,A,P_1,Q_1,\alpha_1,\beta_1)$ and
$(A,A,P_2,Q_2,\alpha_2,\beta_2)$ consists of a couple $(f,g)$,
with $f:\ P_1\to P_2$, $g:\ Q_1\to Q_2$ $H$-colinear $A$-bimodule
isomorphisms such that $\alpha_1=\alpha_2\circ (f\ot_B g)$ and
$\beta_1=\beta_2\circ (g\ot_A f)$. Note that
 $\dul{\Pic}^H(A)$ has the structure of monoidal category, where
the tensor product is given by the formula
\begin{eqnarray*}
&&\hspace*{-15mm}
(A,A,P_1,Q_1,\alpha_1,\beta_1)\ot (A,A,P_2,Q_2,\alpha_2,\beta_2)=
\bigl(A,A,P_1\ot_AP_2,\\
&&~~~~~Q_2\ot_AQ_1,\alpha_1\circ (P_1\ot_A \alpha_2\ot_A Q_1),
\beta_2\circ (Q_2\ot_A\beta_1\ot_A P_1)\bigr).
\end{eqnarray*}
The unit object is $(A,A,A,A,A,A)$. Every object
$(A,A,P_1,Q_1,\alpha_1,\beta_1)$ of
$\dul{\Pic}^H(A)$ has an inverse, namely $(A,A,Q_1,P_1,\beta_1,\alpha_1)$.\\
Up to isomorphism, a strict $H$-Morita context is completely determined by one of its
underlying bimodules; therefore, we use the shorter notation
$\ul{P}_1=(A,A,P_1,Q_1,\alpha_1,\beta_1)$.
$\Pic^H(A)=K_0\dul{\Pic}^H(A)$, the set of isomorphism classes in $\dul{\Pic}^H(A)$,
is a group under the operation induced by the tensor product, and is called
the $H$-Picard group of $A$. If $H=k$, and $B$ is a $k$-algebra, then
$\Pic^k(B)=\Pic(B)$ is the classical Picard group of $B$.

\subsection*{The $\sq$-Picard group of $B$.} Let $M,N\in
{}_{A^{\sq e}}\Mm$. In \cite{CCMT}, it is shown that $M\ot_B N\in
{}_{A^{\sq e}}\Mm$. We will need an explicit formula for the
$A^{\sq e}$-action on $M\ot_B N$, given in \prref{5.11} below.

 In
the proof \cite[Theorem 2.4]{CCMT}, it is shown that we have an
isomorphism
$$\alpha_N:\ A\ot_B N\to A^e\ot_{A^{\sq e}} N,\quad
\alpha_N(a\ot_B n)=(a\ot 1)\ot_{A^{\sq e}} n.$$
We claim that the inverse $\alpha_N^{-1}$ of $\alpha_N$ is given by the formula
$$\alpha_N^{-1}((d\ot e)\ot_{A^{\sq e}} n)=
\sum_i dl_i(S(e_{[1]}))\ot_B (r_i(S(e_{[1]}))\ot e_{[0]})\cdot n.$$
It follows from \leref{5.4} that $\alpha_N^{-1}$ is well-defined. Using the property
that $\gamma_A(1_H)=1_A\ot_B 1_A$, we find that
$$(\alpha_N^{-1}\circ \alpha_N)(a\ot_B n)
=\alpha_N^{-1}((a\ot1)\ot_{A^{\sq e}}n)=a\ot_B n.$$
We also compute that
\begin{eqnarray*}
&&\hspace*{-15mm}
(\alpha_N\circ\alpha_N^{-1})((d\ot e)\ot_{A^{\sq e}}n)\\
&=&\alpha_N\Bigl(\sum_i dl_i(S(e_{[1]}))\ot_B (r_i(S(e_{[1]}))\ot e_{[0]})\cdot n\Bigr)\\
&=& \sum_i (dl_i(S(e_{[1]}))\ot 1)\ot_{A^{\sq e}} (r_i(S(e_{[1]}))\ot e_{[0]})\cdot n\\
&=& \bigl(\sum_i dl_i(S(e_{[1]}))r_i(S(e_{[1]}))\ot e_{[0]}\bigr)\ot_{A^{\sq e}} n
\equal{\equref{1.2.5}}(d\ot e)\ot_{A^{\sq e}}n.
\end{eqnarray*}
Using $\alpha_N$, the left $A^e$-action on $A^e\ot_{A^{\sq e}} N$ can be transported
to a left $A^e$-action on $A\ot_B N$:
\begin{eqnarray*}
&&\hspace*{-5mm}
 (d\ot e)(a\ot_B n) = \alpha_N^{-1}\bigl((d\ot e)\alpha_N(a\ot_B
n)\bigr)  \\
&=&\alpha_N^{-1}\bigl((da\ot e)\ot_{A^{\sq e}} n\bigr)
= \sum_i da l_i(S(e_{[1]}))\ot_B (r_i(S(e_{[1]}))\ot
e_{[0]})\cdot n.
\end{eqnarray*}
If $M,N\in {}_{A^{\sq e}}\Mm$, then $A^e\ot _{A^{\sq e}}M, A^e\ot _{A^{\sq e}}N
\in {}_A\Mm_A^H$, hence
$$(A^e\ot _{A^{\sq e}}M)\ot_A(A^e\ot _{A^{\sq e}}N)\cong (A\ot_B M)\ot_A (A\ot_B N)
\cong A\ot_BM\ot_B N$$ in the category ${}_A\Mm_A^H.$ On $(A\ot_B
M)\ot_A (A\ot_B N)$, the $A$-bimodule structure (or left
$A^e$-module structure) is given by the formula
\begin{eqnarray*}
&&\hspace*{-20mm}
(d\ot e)\cdot\bigl((a\ot_B m)\ot_A(a'\ot_Bn)\bigr)\\
&=& (d\ot 1)\cdot (a\ot_B m)\ot_A (a\ot e)\cdot (a'\ot_B n)\\
&=& \sum_i (da\ot_B m)\ot_A \Bigl(a'l_i(S(e_{[1]}))\ot_B \bigl(r_i(S(e_{[1]}))\ot e_{[0]}\bigr)\cdot n
\Bigr).
\end{eqnarray*}
We transport this left $A^e$-module structure to $A\ot_BM\ot_B N$:
\begin{eqnarray*}
&&\hspace*{-20mm}
(d\ot e)\cdot (a\ot_B m\ot_B n)\\
&=& \sum_i (1\ot l_i(S(e_{[1]})))\cdot (da\ot_B m)\ot_B (r_i(S(e_{[1]}))\ot e_{[0]})\cdot n\\
&=& \sum_{i,j} dal_j\Bigl(S\bigl(l_i(S(e_{[1]}))_{[1]}\bigr)\Bigr)\\
&&\hspace*{2cm}\ot_B
\Bigr(r_j\Bigl(S\bigl(l_i(S(e_{[1]}))_{[1]}\bigr)\Bigr)\ot r_i(S(e_{[1]}))_{[0]}\Bigr)\cdot m\\
&&\hspace*{2cm}
\ot_B \Bigl(r_i(S(e_{[1]}))\ot e_{[0]}\Bigr)\cdot n\\
&\equal{\equref{1.2.4}}&
\sum_{i,j} da l_j(S(e_{[1]}))\ot_B \bigl(r_j(S(e_{[1]}))\ot l_i(S(e_{[2]}))\bigr)\cdot m\\
&&\hspace*{2cm}
\ot_B (r_i(S(e_{[2]})\ot e_{[0]})\cdot n.
\end{eqnarray*}
Now take $\sum_k a_k\ot a'_k\in A^{\sq e}$. Using the above formula, we compute
that
\begin{eqnarray*}
&&\hspace*{-10mm}
(\sum_k a_k\ot a'_k)\cdot (1\ot_B m\ot_B n)
= \sum_{i,j,k} a_kl_j(S(a'_{k[1]}))\\
&&
\ot_B \bigl(r_j(S(a'_{k[1]}))\ot l_i(S(a'_{k[2]}))\bigr)\cdot m
\ot_B \bigl(r_i(S(a'_{k[2]}))\ot a'_{k[0]})\cdot n
\end{eqnarray*}
\begin{eqnarray*}
&=& \sum_{i,j,k} a_{k[0]}l_j(S(a_{k[1]}))
\ot_B \bigl(r_j(S(a_{k[1]}))\ot l_i(S(a_{k[2]}))\bigr)\cdot m\\
&&\hspace*{2cm}
\ot_B \bigl(r_i(S(a_{k[2]}))\ot a'_{k})\cdot n\\
&\equal{\equref{1.2.6}}&
1\ot_B (a_{k[0]}\ot l_i(a_{k[1]}))\cdot m \ot_B (r_i(a_{k[1]})\ot a'_k)\cdot n
\end{eqnarray*}
The map \[M\ot_BN\to (A\ot_B M\ot_BN)^{{\rm co}H}, \quad m\ot_B
n\mapsto 1\ot_Bm\ot_B n\] is an isomorphism. Hence the left
$A^{\sq e}$-action on $A\ot_B M\ot_BN$ restricts to an action on
$(A\ot_B M\ot_BN)^{{\rm co}H}$, and defines an action on
$M\ot_BN$. We can summarize this as follows.

\begin{proposition}\prlabel{5.11}
Let $M,N\in {}_{A^{\sq e}}\Mm$. Then we have the following action on $M\ot_B N$:
\begin{equation}\eqlabel{5.11.1}
(\sum_k a_k\ot a'_k)\cdot (m\ot_B n)=
(a_{k[0]}\ot l_i(a_{k[1]}))\cdot m \ot_B (r_i(a_{k[1]})\ot a'_k)\cdot n.
\end{equation}
\end{proposition}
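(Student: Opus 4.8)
The plan is to recognize that the lengthy computation immediately preceding the statement already \emph{is} the proof, so that the remaining task is simply to assemble its steps into a coherent argument. I would open by recalling from \cite{CCMT} that, for $M,N\in{}_{A^{\sq e}}\Mm$, the balanced tensor product $M\ot_B N$ again carries a left $A^{\sq e}$-module structure; the content of the proposition is to exhibit this action explicitly by formula \equref{5.11.1}.

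First I would use the isomorphism $\alpha_N:\ A\ot_B N\to A^e\ot_{A^{\sq e}}N$, together with the explicit inverse written down (and verified, via \equref{1.2.5}) above, to transport the tautological left $A^e$-action on $A^e\ot_{A^{\sq e}}N$ to $A\ot_B N$. This yields the displayed formula for $(d\ot e)\cdot(a\ot_B n)$. Next, I would invoke the isomorphism
$$(A^e\ot_{A^{\sq e}}M)\ot_A(A^e\ot_{A^{\sq e}}N)\cong A\ot_B M\ot_B N$$
in ${}_A\Mm_A^H$ to carry the resulting $A^e$-module structure onto $A\ot_B M\ot_B N$, obtaining the explicit two-sided formula for $(d\ot e)\cdot(a\ot_B m\ot_B n)$ computed above, whose main simplification rests on \equref{1.2.4}.

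The crucial step is then to specialize: taking $a=1$ and letting $d\ot e$ range over a general element $\sum_k a_k\ot a'_k\in A^{\sq e}$, I would apply the translation-map identity \equref{1.2.6} to collapse the expression and force the leading tensor factor to $1$, so that the outcome lies in $(A\ot_B M\ot_B N)^{{\rm co}H}$. Since $m\ot_B n\mapsto 1\ot_B m\ot_B n$ identifies $M\ot_B N$ with these coinvariants, the restriction of the $A^{\sq e}$-action to the coinvariants reads off precisely as \equref{5.11.1}, completing the proof.

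The main obstacle is bookkeeping rather than conceptual: one must check that the two ``partial'' elements $a_{k[0]}\ot l_i(a_{k[1]})$ and $r_i(a_{k[1]})\ot a'_k$ genuinely lie in $A^{\sq e}=A\sq_H A^{\rm op}$ (so that they may act on $m$ and $n$ respectively), and that the whole expression is independent both of the chosen presentation $\gamma_A(h)=\sum_i l_i(h)\ot_B r_i(h)$ and of the balanced-tensor representatives. These well-definedness checks are exactly what the cotensor and translation-map relations \equref{1.2.3}--\equref{1.2.6} are designed to guarantee, and verifying that the computed element actually descends to the coinvariants is the one point that repays careful attention.
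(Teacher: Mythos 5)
Your proposal is correct and follows essentially the same route as the paper: the paper's ``proof'' of \prref{5.11} is exactly the computation preceding the statement, namely transporting the tautological $A^e$-action through $\alpha_N$ (with inverse verified via \equref{1.2.5} and well-definedness via \leref{5.4}), passing to $A\ot_B M\ot_B N$ via the isomorphism in ${}_A\Mm_A^H$ with simplification by \equref{1.2.4}, and then collapsing at $a=1$ with \equref{1.2.6} so the action restricts to the coinvariants $(A\ot_B M\ot_B N)^{{\rm co}H}\cong M\ot_B N$. Your attention to the well-definedness of the partial factors $a_{k[0]}\ot l_i(a_{k[1]})$ and $r_i(a_{k[1]})\ot a'_k$ matches the role the cotensor relations \equref{1.2.3}--\equref{1.2.6} play in the paper's argument.
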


Now let $\dul{\Pic}^{\sq_H}(B)$ be the category with strict
$\sq_H$-Morita contexts of the form $(B,B,M,N,\gamma,\delta)$ as
objects. A morphism between the $\sq_H$-Morita contexts
$(B,B,M_1,N_1,\gamma_1,\delta_1)$ and
$(B,B,M_2,N_2,\gamma_2,\delta_2)$ consists of a couple $(f,g)$
with $f:\ M_1\to M_2$ and $g:\ N_1\to N_2$ left $A^{\sq e}$-module
isomorphisms such that $\gamma_1=\gamma_2\circ (f\ot_B g)$ and
$\delta_1=\delta_2\circ (g\ot_B f)$.

It follows from \prref{5.11} that $\dul{\Pic}^{\sq_H}(B)$ is a
monoidal category, with tensor product induced by the tensor
product over $B$, and unit object $(B,B,B,B,B,B)$. Every object in
$\dul{\Pic}^{\sq_H}(B)$ has an inverse, and we call
$K_0\dul{\Pic}^{\sq_H}(B)= {\Pic}^{\sq_H}(B)$ the $\sq_H$-Picard
group of $B$. From \thref{1.6} and the construction preceding
\prref{5.11}, it follows that $\dul{\Pic}^H(A)$ and
$\dul{\Pic}^{\sq_H}(B)$ are equivalent monoidal categories, so we
conclude that
$
{\Pic}^H(A)\cong {\Pic}^{\sq_H}(B)$.

\section{The $H$-stable part of the Picard group}\selabel{5b}
Throughout this Section, we assume that $H$ is cocommutative.
Now let $A$
be a right $H$-Galois extension of $B$. Our next aim is to
introduce the $H$-invariant subgroup $\Pic(B)^H$ of $\Pic(B)$;
roughly spoken, an object of $\dul{\Pic}(B)$ represents an element
of $\Pic(B)^H$ if its connecting modules $M$ and $N$ are
$H$-stable.
First we need to fix some technical details.\\
We consider
the category ${}_B\Mm_B^H$. Its objects are $B$-bimodules and right $H$-comodules $M$,
such that the right $H$-coaction $\rho$ is left and right $B$-linear, that is,
$\rho(bmb')=bm_{[0]}b'\ot m_{[1]}$,
for all $b,b'\in B$ and $m\in M$. The morphisms are the
$H$-colinear $B$-bimodule maps. For $M,N\in {}_B\Mm_B^H$, we
consider the {\it generalized cotensor product}
\begin{align*} M\ot_B^HN= \{\sum_i m_i\ot_B n_i &\in M\ot_BN \mid  \\
&\sum_i m_{i[0]}\ot_B n_i\ot m_{i[1]}=\sum_i m_i\ot_B n_{i[0]}\ot
n_{i[1]}\}.\end{align*}  Then $M\ot_B^HN$ is an object of
${}_B\Mm_B^H$, with right $H$-coaction
$$\rho(\sum_i m_i\ot n_i)=
\sum_i m_{i[0]}\ot_B n_i\ot m_{i[1]}=\sum_i m_i\ot_B n_{i[0]}\ot n_{i[1]}.$$
We have a functor $-\ot H:\ {}_B\Mm_B\to {}_B\Mm_B^H$. For $M\in {}_B\Mm_B$,
the structure on $M\ot H$ is given by the formulas
$$\rho(m\ot h)=m\ot \Delta(h),~~b(m\ot h)b'=bmb'\ot h.$$
In particular, $B\ot H\in {}_B\Mm_B^H$. The functor $-\ot H$ is monoidal in the sense
of our next Lemma.

\begin{lemma}\lelabel{5.1}
For $M,M'\in {}_B\Mm_B$, we have a natural isomorphism
$$(M\ot H)\ot_B^H (M'\ot H)\cong (M\ot_B M')\ot H$$
in ${}_B\Mm_B^H$.
\end{lemma}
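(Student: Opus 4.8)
The plan is to construct the isomorphism explicitly and verify it respects all the structure.  First I would write down the natural candidate map.  An element of $(M\ot H)\ot_B^H(M'\ot H)$ is a sum $\sum_i(m_i\ot h_i)\ot_B(m'_i\ot h'_i)$ satisfying the generalized cotensor condition, which with the coaction $\rho(m\ot h)=m\ot\Delta(h)$ reads
\begin{equation*}
\sum_i m_i\ot h_{i(1)}\ot_B(m'_i\ot h'_i)\ot h_{i(2)}
=\sum_i(m_i\ot h_i)\ot_B m'_i\ot h'_{i(1)}\ot h'_{i(2)}.
\end{equation*}
I would define $\Phi:\ (M\ot H)\ot_B^H(M'\ot H)\to(M\ot_B M')\ot H$ by first using the counit to collapse the middle copy of $H$ and then multiplying the two group-like-level pieces together; concretely the map sending $(m\ot h)\ot_B(m'\ot h')\mapsto (m\ot_B m')\ot\varepsilon(h)h'$ (or the symmetric variant $hh'$, to be fixed by checking well-definedness), and an inverse $\Psi$ built from $(m\ot_B m')\ot h\mapsto (m\ot h_{(1)})\ot_B(m'\ot h_{(2)})$.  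The cocommutativity of $H$, in force throughout this section, is what makes these comultiplication-based formulas land in the correct subspace and be mutually inverse.

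The key steps, in order, are: (1) check that $\Psi$ lands in the generalized cotensor product $M\ot_B^H$ — this is where the cotensor condition above must be matched against $\Delta$ being coassociative and cocommutative; (2) check $\Phi$ is well defined on the cotensor subspace, i.e.\ that applying $\varepsilon$ to the middle $H$-factor is consistent with the defining relation (the relation forces the two ways of splitting to agree, so $\varepsilon$ applied to either gives the same element of $(M\ot_B M')\ot H$); (3) verify $\Phi$ and $\Psi$ are mutually inverse, which reduces to the counit axioms $\varepsilon(h_{(1)})h_{(2)}=h=h_{(1)}\varepsilon(h_{(2)})$; and (4) verify that both maps are $B$-bimodule and $H$-colinear, using that the $B$-actions on $M\ot H$ and $M'\ot H$ touch only the $M$, $M'$ factors while the coactions touch only the $H$ factors, so the two structures interact transparently.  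Naturality in $M$ and $M'$ is then immediate since every formula is built functorially from the bimodule maps.

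I expect the main obstacle to be step (1): confirming that the candidate inverse $\Psi$ actually produces an element of the generalized cotensor product rather than merely of $(M\ot H)\ot_B(M'\ot H)$.  This is precisely the point where I must verify that
\begin{equation*}
(m\ot h_{(1)})\ot_B(m'\ot h_{(2)})
\end{equation*}
satisfies $\sum m_{[0]}\ot_B n\ot m_{[1]}=\sum m\ot_B n_{[0]}\ot n_{[1]}$, which unwinds to a coassociativity identity $h_{(1)}\ot h_{(2)}\ot h_{(3)}$ read in two groupings; cocommutativity guarantees the middle factors can be interchanged so that both sides coincide.  Once that membership is secured, the remaining verifications are the routine counit and bimodule bookkeeping described above, and the stated natural isomorphism in ${}_B\Mm_B^H$ follows.
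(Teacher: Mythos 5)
Your proposal is correct and takes essentially the same route as the paper: the paper's map $\kappa(m\ot_B m'\ot h)=(m\ot h_{(1)})\ot_B (m'\ot h_{(2)})$ is your $\Psi$, and its inverse likewise collapses one $H$-leg with the counit (the paper uses $h\,\varepsilon(h')$, which agrees with your $\varepsilon(h)h'$ on the cotensor subspace by the defining relation, whereas your hedged alternative $hh'$ would indeed fail and is rightly discarded). You also correctly isolate the one genuinely nontrivial point — that cocommutativity is what puts $\Psi(m\ot_B m'\ot h)$ inside $(M\ot H)\ot_B^H(M'\ot H)$, since the membership condition unwinds to $h_{(1)}\ot h_{(3)}\ot h_{(2)}=h_{(1)}\ot h_{(2)}\ot h_{(3)}$ — a point the paper passes over as ``easy to see'' under the section's standing cocommutativity hypothesis.
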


\begin{proof}
It is easy to see that the map
\begin{align*} \kappa:\ (M\ot_B M')\ot H
&\to (M\ot H)\ot_B^H (M'\ot H),\\ m\ot_Bm'\ot h&\mapsto (m\ot
h_{(1)})\ot_B (m'\ot h_{(2)}) \end{align*}
is well-defined and
right $H$-colinear. We claim that $\kappa$ is bijective, with inverse
given by the formula
$$\kappa^{-1}(\sum_j (m_j\ot h_j)\ot_B ( m'_j\ot h'_j))= \sum_j m_j\ot m'_j\ot h_j\varepsilon(h'_j).$$
It is clear that $\kappa^{-1}\circ\kappa=M\ot_B M'\ot H$. If
\[x:=\sum_j (m_j\ot h_j)\ot_B (m'_j\ot h'_j) \in (M\ot H)\ot_B^H (M'\ot
H),\]
then
$$\sum_j (m_j\ot h_{j(1)})\ot_B (m'_j\ot h'_j)\ot h_{j(2)}=\sum_j (m_j\ot h_j)\ot_B (m'_j\ot h'_{j(1)})
\ot h'_{j(2)}.$$
Applying $\varepsilon$ to the third tensor factor, we find
$$(\kappa\circ \kappa^{-1})(x)=\sum_j (m_j\ot h_{j(1)})\ot_B (m'_j\ot \varepsilon(h'_j) h_{j(2)})=x,$$
hence the claim is verified.
\end{proof}

\begin{lemma}\lelabel{5.2}
For all $P\in {}_B\Mm_B^H$, we have that
$$P\ot_B^H(B\ot H)\cong (B\ot H)\ot_B^HP\cong P$$
in ${}_B\Mm_B^H$.
\end{lemma}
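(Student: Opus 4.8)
The plan is to write down, for each of the two isomorphisms, an explicit pair of mutually inverse morphisms in ${}_B\Mm_B^H$; all of the content is concentrated in a single well-definedness check, which is exactly where the standing cocommutativity hypothesis of this section is used. For $P\ot_B^H(B\ot H)\cong P$ I would take
$$\iota:\ P\to P\ot_B^H(B\ot H),\quad \iota(p)=p_{[0]}\ot_B(1\ot p_{[1]}),$$
and let $\pi$ be the corestriction to $P$ of the evident map $P\ot_B(B\ot H)\to P$, $p\ot_B(b\ot h)\mapsto pb\,\varepsilon(h)$ (well defined over $\ot_B$ because $b\in B$). That $\pi\circ\iota$ is the identity of $P$ is immediate from the counit axiom $p_{[0]}\varepsilon(p_{[1]})=p$ for the comodule $P$.

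The crux is to verify that $\iota(p)$ really lies in $P\ot_B^H(B\ot H)$, i.e.\ that it satisfies the defining equation $\sum m_{[0]}\ot_B n\ot m_{[1]}=\sum m\ot_B n_{[0]}\ot n_{[1]}$. Expanding both sides using the coaction $\rho(1\ot h)=1\ot h_{(1)}\ot h_{(2)}$ on $B\ot H$ together with coassociativity of $\rho_P$, one gets (in iterated Sweedler notation) $p_{[0]}\ot_B(1\ot p_{[2]})\ot p_{[1]}$ on the one side and $p_{[0]}\ot_B(1\ot p_{[1]})\ot p_{[2]}$ on the other: the two expressions differ only by interchanging the two right-hand copies of $H$, and they agree precisely because $H$ is cocommutative, so that $p_{[1]}\ot p_{[2]}=p_{[2]}\ot p_{[1]}$. (Equivalently, cocommutativity renders those two tensorands interchangeable, so the condition collapses to the one for the ordinary cotensor product, whose canonical isomorphism with $P$ is the corestriction of $\rho_P$; for $P$ of the form $M\ot H$ the statement is moreover the special case $M'=B$ of \leref{5.1}.) The very same interchange shows that $\iota$ is $H$-colinear, and $B$-bilinearity of both $\iota$ and $\pi$ is read off at once from the $B$-bilinearity of $\rho_P$.

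It remains to see that $\iota\circ\pi$ is the identity. Given $X=\sum_i p_i\ot_B(b_i\ot h_i)$ in $P\ot_B^H(B\ot H)$, I would apply $\varepsilon$ to the copy of $H$ sitting inside the middle factor $B\ot H$ of its defining equation: the right-hand side collapses by the counit and yields $X=\sum_i\varepsilon(h_i)\,\rho_P(p_ib_i)$, which, under the identification $P\ot_B(B\ot H)\cong P\ot H$, is exactly $\iota(\pi(X))$. Hence $\iota$ and $\pi$ are mutually inverse isomorphisms in ${}_B\Mm_B^H$. The second isomorphism $(B\ot H)\ot_B^HP\cong P$ is handled symmetrically, via $\iota'(p)=(1\ot p_{[1]})\ot_B p_{[0]}$ and $\pi'((b\ot h)\ot_B p)=\varepsilon(h)\,bp$; the well-definedness of $\iota'$ again rests on cocommutativity, and the remaining checks are identical. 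I expect this well-definedness of $\iota$ (and of $\iota'$) to be the only genuine obstacle, everything else being a formal consequence of the counit and coassociativity axioms.
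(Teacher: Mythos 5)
Your proof is correct and is essentially the paper's own: the paper uses exactly the same maps, namely $\alpha(p)=p_{[0]}\ot_B(1\ot p_{[1]})$ with inverse $\alpha^{-1}(\sum_i p_i\ot_B(b_i\ot h_i))=\sum_i p_ib_i\varepsilon(h_i)$, notes that $\alpha^{-1}\circ\alpha=\mathrm{id}$ is clear from the counit axiom, and proves $\alpha\circ\alpha^{-1}=\mathrm{id}$ by applying the counit to the defining relation of the generalized cotensor product, precisely as you do (with the second isomorphism handled symmetrically). Your explicit verification that $\iota(p)$ lands in $P\ot_B^H(B\ot H)$ --- the one place where the standing cocommutativity hypothesis of the section is genuinely used --- is left implicit in the paper (``we have a well-defined morphism''), so you have correctly identified and supplied the only detail it omits.
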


\begin{proof}
We have a well-defined morphism
$$\alpha:\ P\to P\ot_B^H(B\ot H),~~\alpha(p)=p_{[0]}\ot_B (1\ot p_{[1]})$$
in ${}_B\Mm_B^H$. The inverse of $\alpha$ is given by the formula
$$\alpha^{-1}(\sum_ip_i\ot_B (b_i\ot h_i))=\sum_i p_ib_i\varepsilon(h_i).$$
It is clear that $\alpha^{-1}\circ\alpha=P$. If
$\sum_ip_i\ot_B (b_i\ot h_i)\in P\ot_B^H(B\ot H)$, then
$$\sum_i p_{i[0]}\ot_B (b_i\ot h_i)\ot p_{i[1]}=
\sum_i p_i\ot_B (b_i\ot h_{i(1)})\ot h_{i(2)}.$$
Then we find
\begin{eqnarray*}
&&\hspace*{-2cm}
(\alpha\circ\alpha^{-1})(\sum_i p_i\ot_B (b_i\ot h_i))=
\sum_i p_{i[0]}b_i\varepsilon(h_i)\ot_B p_{i[1]}\\
&=& \sum_i p_ib_i\varepsilon(h_{i(1)})\ot_B(1\ot h_{i(2)})=\sum_i p_i\ot_B (b_i\ot h_i).
\end{eqnarray*}
\end{proof}

Observe that $A^{\sq e}=A \sq_H A^{{\rm op}} \in {}_{B^e}\Mm_{B^e}^H$, with left and
right $B^e$-action given by the formula
$$(b\ot b')(\sum_i a_i\ot a'_i)(c\ot c')=\sum_i ba_ic\ot c'a'_ib'.$$
Hence we have a second functor
$$A^{\sq e}\ot_{B^e}-:\ {}_B\Mm_B\to {}_B\Mm_B^H.$$
Take $M\in {}_B\Mm_B$. $A^{\sq e}\ot_{B^e}M$ is a left $B^e$-module, and, a fortiori,
a $B$-bimodule. The right $H$-coaction on $A^{\sq e}\ot_{B^e}M$ is given by
the formula
\begin{align*}
 \rho\bigl((\sum_k a_k\ot a'_k)\ot_{B^e} m\bigr)
 &= (\sum_k a_{k[0]}\ot a'_k)\ot_{B^e} m\ot a_{k[1]}    \\
&= (\sum_k a_k\ot a'_{k[0]})\ot_{B^e} m\ot a'_{k[1]}.
\end{align*}
Our next aim is to show that the functor $A^{\sq e}\ot_{B^e}-$ is also monoidal. Before we
can show this, we need a few technical Lemmas.
Let $M\in {}_B\Mm_B$. Then $A^{\rm op}\ot_B M\in {}_B\Mm_B^H$, with the
right $H$-coaction induced by the coaction on $A^{\rm op}$.

\begin{lemma}\lelabel{5.3}
Suppose that $M\in {}_B\Mm_B$ is flat as a left $B$-module. Then the map
$$f:\ A^{\sq e}\ot_B M\to A\sq_H (A^{\rm op}\ot_B M),\quad
\sum_k (a_k\ot a'_k)\ot_B m\mapsto \sum_k a_k\ot (a'_k\ot_B m)$$
is an isomorphism. In a similar way, if $M$ is flat as a right
$B$-module, then
$$M\ot_B A^{\sq e}\cong (M\ot_B A)\sq_H A^{\rm op}.$$
\end{lemma}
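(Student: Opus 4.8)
The plan is to realize the cotensor product $A^{\sq e}=A\sq_H A^{\rm op}$ as a kernel and then exploit the flatness of $M$, so that the whole statement reduces to the fact that an exact functor preserves kernels. Recall that $A^{\rm op}$ carries the coaction $\rho(a')=a'_{[0]}\ot S(a'_{[1]})$, so by the description of $\sq_H$ used in \leref{2.2} the subspace $A\sq_H A^{\rm op}\subseteq A\ot A^{\rm op}$ is exactly the kernel of the coaction-difference map $\Theta:\ A\ot A^{\rm op}\to A\ot A^{\rm op}\ot H$ given by
\begin{equation*}
\Theta\Bigl(\sum_k a_k\ot a'_k\Bigr)=\sum_k\bigl(a_{k[0]}\ot a'_k\ot a_{k[1]}-a_k\ot a'_{k[0]}\ot S(a'_{k[1]})\bigr),
\end{equation*}
giving an exact sequence $0\to A^{\sq e}\to A\ot A^{\rm op}\mapright{\Theta}A\ot A^{\rm op}\ot H$. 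The key observation is that the copy of $B$ over which we tensor in the statement acts only through the $A^{\rm op}$-factor, while the outer $A$ and the trailing $H$ are merely $k$-free factors carried along unchanged; in particular $\Theta$ is right $B$-linear for this structure.

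Since $M$ is flat as a left $B$-module, the functor $-\ot_B M$ is exact and hence preserves this kernel, producing an exact sequence
\begin{equation*}
0\to A^{\sq e}\ot_B M\to (A\ot A^{\rm op})\ot_B M\mapright{\Theta\ot_B M}(A\ot A^{\rm op}\ot H)\ot_B M.
\end{equation*}
Because $B$ acts only on the $A^{\rm op}$-factor and $A$, $H$ are $k$-flat, there are canonical identifications $(A\ot A^{\rm op})\ot_B M\cong A\ot(A^{\rm op}\ot_B M)$ and $(A\ot A^{\rm op}\ot H)\ot_B M\cong A\ot(A^{\rm op}\ot_B M)\ot H$, both sending $(a\ot a')\ot_B m$ to $a\ot(a'\ot_B m)$. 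Under these identifications $\Theta\ot_B M$ becomes precisely the coaction-difference map defining $A\sq_H(A^{\rm op}\ot_B M)$, where $A^{\rm op}\ot_B M$ carries the coaction $a'\ot_B m\mapsto a'_{[0]}\ot_B m\ot S(a'_{[1]})$ inherited from $A^{\rm op}$. Hence its kernel is $A\sq_H(A^{\rm op}\ot_B M)$, and chasing an element through the identifications shows that the induced isomorphism $A^{\sq e}\ot_B M\cong A\sq_H(A^{\rm op}\ot_B M)$ is exactly the map $f$; it is visibly $B$-bilinear and $H$-colinear, hence an isomorphism in ${}_B\Mm_B^H$.

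I expect the only genuine obstacle to be the bookkeeping of the various $B$-module structures: one must verify that the $B$ over which we tensor on the domain really is the one acting through the $A^{\rm op}$-factor (so that it matches the left $B$-action on $M$ in $A^{\rm op}\ot_B M$), that $\Theta$ is linear for this action, and that the two canonical identifications are well defined and mutually inverse. Once this is pinned down, flatness of $M$ does all the work. The second isomorphism $M\ot_B A^{\sq e}\cong (M\ot_B A)\sq_H A^{\rm op}$ is obtained by the mirror-image argument: one writes $A^{\sq e}=A\sq_H A^{\rm op}$ as the same kernel, observes that the relevant $B$ now acts through the $A$-factor, and applies the exact functor $M\ot_B-$, using this time that $M$ is flat as a right $B$-module.
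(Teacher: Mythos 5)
Your proof is correct and is essentially the paper's own argument: the paper also realizes $A^{\sq e}$ via the (co)equalizer presentation $0\to A\sq_H A^{\rm op}\to A\ot A^{\rm op}\rightrightarrows A\ot A^{\rm op}\ot H$, applies the exact functor $-\ot_B M$, and concludes via the Five Lemma on the resulting commutative diagram with exact rows, which is just your ``exact functors preserve kernels'' step in different packaging (parallel arrows versus their difference $\Theta$). Your explicit attention to which $B$-action is being used and to the canonical identifications is exactly the bookkeeping the paper leaves implicit.
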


\begin{proof}
Consider the commutative diagram
$$\xymatrix{
0\ar[r]&A^{\sq e}\ot_BM\ar[d]_{f}\ar[r]&A^e\ot_B M\ar[d]_{\cong}\ar@<.5ex>[r]\ar@<-.5ex>[r]&
A^e\ot_B M\ot H\\
0\ar[r]&A\sq_H(A^{\rm op}\ot_B M)\ar[r]&A\ot A^{\rm op}\ot_B M\ar@<.5ex>[r]\ar@<-.5ex>[r]&
A^e\ot_B M\ot H}$$
The top row is exact because $M$ is left $B$-flat, and because of the definition of the
generalized cotensor product. The exactness of the bottom row also follows from the
definition of the generalized cotensor product. It follows from the Five Lemma that $f$
is an isomorphism.
\end{proof}

\begin{lemma}\lelabel{5.4}
For all $a\in A$,  the element
$$x:=\sum_i l_i(S(a_{[1]}))\ot_B r_i(S(a_{[1]})) \ot a_{[0]}
\in A\ot_B A^{\sq e}.$$
\end{lemma}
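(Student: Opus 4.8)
The plan is to realise $A^{\sq e}=A\sq_H A^{\rm op}$ as the kernel of an explicit comodule map and then to use flatness of $A$ to reduce the claim to a direct computation with the $\gamma_A$-identities of \seref{1}.

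First I would recall that, since the coaction on $A^{\rm op}$ is $a\mapsto a_{[0]}\ot S(a_{[1]})$, by definition of the cotensor product (cf. the description in \leref{2.2}) the subspace $A^{\sq e}\subseteq A\ot A^{\rm op}$ is exactly the kernel of
\[\theta:\ A\ot A^{\rm op}\to A\ot A^{\rm op}\ot H,\quad \theta(b\ot c)=b_{[0]}\ot c\ot b_{[1]}-b\ot c_{[0]}\ot S(c_{[1]}).\]
The map $\theta$ is left $B$-linear for the action $b\cdot(u\ot v)=bu\ot v$, because every $b\in B=A^{{\rm co}H}$ is coinvariant; hence $\ker\theta=A^{\sq e}$ is a left $B$-module. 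By \thref{1.1}, $A$ is faithfully flat, in particular flat, as a right $B$-module, so $A\ot_B-$ is exact and $A\ot_B A^{\sq e}=A\ot_B\ker\theta=\ker(\Id_A\ot_B\theta)$. It therefore suffices to show $(\Id_A\ot_B\theta)(x)=0$, i.e. that the second and third tensorands $r_i(S(a_{[1]}))$ and $a_{[0]}$ of $x$ satisfy the defining condition of the cotensor product, with the first tensorand carried along through $\ot_B$.

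Next I would compute the two legs of this condition. Applying the coaction to the middle factor and using \equref{1.2.3} with $h=S(a_{[1]})$, together with the fact that cocommutativity makes $S$ a coalgebra map, so that $\Delta(S(a_{[1]}))=S(a_{[1]})\ot S(a_{[2]})$ in the iterated notation $a\mapsto a_{[0]}\ot a_{[1]}\ot a_{[2]}$, one gets
\begin{align*}
&\sum_i l_i(S(a_{[1]}))\ot_B r_i(S(a_{[1]}))_{[0]}\ot a_{[0]}\ot r_i(S(a_{[1]}))_{[1]}\\
&\qquad=\sum_i l_i(S(a_{[1]}))\ot_B r_i(S(a_{[1]}))\ot a_{[0]}\ot S(a_{[2]}).
\end{align*}
On the other hand, applying the $A^{\rm op}$-coaction to the third factor $a_{[0]}$ and using coassociativity of the comodule gives
\begin{align*}
&\sum_i l_i(S(a_{[1]}))\ot_B r_i(S(a_{[1]}))\ot a_{[0][0]}\ot S(a_{[0][1]})\\
&\qquad=\sum_i l_i(S(a_{[2]}))\ot_B r_i(S(a_{[2]}))\ot a_{[0]}\ot S(a_{[1]}).
\end{align*}

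Finally, these two expressions agree: they differ only by interchanging the two $H$-legs $a_{[1]}$ and $a_{[2]}$, which is permitted because $H$ is cocommutative, so that $a_{[0]}\ot a_{[1]}\ot a_{[2]}=a_{[0]}\ot a_{[2]}\ot a_{[1]}$. Hence $(\Id_A\ot_B\theta)(x)=0$, so $x\in A\ot_B A^{\sq e}$. The hard part is purely the index bookkeeping: correctly propagating the Sweedler indices through the antipode and through \equref{1.2.3}, and invoking cocommutativity in the two places above, first to transport $\Delta$ across $S$ and then to swap the two legs at the end.
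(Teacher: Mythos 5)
Your proof is correct and follows essentially the same route as the paper: your flatness/equalizer reduction is precisely the content of \leref{5.3} specialized to $M=A$ (which the paper simply cites), and your verification of the cotensor condition via \equref{1.2.3} is the same computation as in the paper's proof. The only cosmetic difference is that you invoke cocommutativity twice (to move $\Delta$ across $S$, and to swap the two $H$-legs at the end), whereas the paper's one-line computation needs only the anti-coalgebra property $\Delta\circ S=\tau\circ(S\ot S)\circ\Delta$; both are valid since $H$ is cocommutative throughout \seref{5b}.
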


\begin{proof}
By \leref{5.3}, it suffices to show that
$x\in (A\ot_B A)\sq_H A^{\rm op}$. Indeed,
\begin{eqnarray*}
&&\hspace*{-2cm}
\sum_i l_i(S(a_{[1]}))\ot_B r_i(S(a_{[1]}))_{[0]}\ot a_{[0]}\ot r_i(S(a_{[1]}))_{[1]}\\
&\equal{\equref{1.2.3}}&
\sum_i l_i(S(a_{[2]}))\ot_B r_i(S(a_{[2]})) \ot a_{[0]}\ot S(a_{[1]}).
\end{eqnarray*}
\end{proof}

\begin{lemma}\lelabel{5.5}
If $\sum_k a_k\ot a'_k\in A^{\sq e}$, then the element
\begin{eqnarray*}
x&=&\sum_{i,k} a_{k[0]}\ot l_i(a_{k[1]})\ot_B r_i(a_{k[1]})\ot a'_k
\\ &=& \sum_{i,k} a_{k}\ot l_i(S(a'_{k[1]}))\ot_B
r_i(S(a'_{k[1]}))\ot a'_{k[0]}\in A^{\sq
e}\ot_B^H A^{\sq e}.
\end{eqnarray*}
\end{lemma}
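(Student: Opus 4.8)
The statement has two independent parts: that the two displayed expressions for $x$ coincide, and that $x$ lies in $A^{\sq e}\ot_B^H A^{\sq e}$. By the definition of the generalized cotensor product, the latter means both that $x\in A^{\sq e}\ot_B A^{\sq e}$ and that $x$ satisfies the coaction-matching condition $\sum m_{[0]}\ot_B n\ot m_{[1]}=\sum m\ot_B n_{[0]}\ot n_{[1]}$. I would treat these three points in turn.

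For the equality of the two expressions, recall that $\sum_k a_k\ot a'_k\in A\sq_H A^{\rm op}$ satisfies
$$\sum_k a_{k[0]}\ot a'_k\ot a_{k[1]}=\sum_k a_k\ot a'_{k[0]}\ot S(a'_{k[1]})$$
in $A\ot A^{\rm op}\ot H$, all comodule legs being those of the original coaction on $A$. Applying to both sides the $k$-linear map $A\ot A^{\rm op}\ot H\to A\ot A^{\rm op}\ot_B A\ot A^{\rm op}$ given by $a\ot a'\ot h\mapsto \sum_i a\ot l_i(h)\ot_B r_i(h)\ot a'$ --- that is, inserting $\gamma_A$ into the $H$-slot and moving the $A^{\rm op}$-factor to the end --- turns the left-hand side into the first form of $x$ (take $h=a_{k[1]}$) and the right-hand side into the second (take $h=S(a'_{k[1]})$). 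This map is well defined because $\gamma_A\colon H\to A\ot_B A$ is, so the two forms of $x$ agree.

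For membership in $A^{\sq e}\ot_B A^{\sq e}$, I would use \leref{5.3} twice to identify
$$A^{\sq e}\ot_B A^{\sq e}\cong A\sq_H\bigl((A^{\rm op}\ot_B A)\sq_H A^{\rm op}\bigr),$$
a subspace of $A\ot A^{\rm op}\ot_B A\ot A^{\rm op}$ in which $x$ visibly sits (the requisite flatness of $A^{\sq e}$ and of $A^{\rm op}$ over $B$ follows from faithful flatness of the Galois extensions). Under this identification the claim splits into two equalizer conditions on $x$ written in its second form: the inner one, concerning the factor $r_i(S(a'_{k[1]}))\ot a'_{k[0]}$, is precisely \leref{5.4} with $a=a'_k$, which rests on \equref{1.2.3}; the outer one, concerning $a_k\ot l_i(S(a'_{k[1]}))$, is its mirror image, proved by the identical computation with \equref{1.2.4} in place of \equref{1.2.3} and using in addition that $S^{2}=\mathrm{id}$, which holds because $H$ is cocommutative. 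Thus both halves of $x$ close up into $A^{\sq e}$.

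The coaction-matching condition is where I expect the real work. Writing $m=a_{k[0]}\ot l_i(a_{k[1]})$ and $n=r_i(a_{k[1]})\ot a'_k$ and computing both coactions by \equref{2.4.1}, coassociativity turns the left-hand side into $\sum_{i,k}(a_{k[0]}\ot l_i(a_{k[2]}))\ot_B(r_i(a_{k[2]})\ot a'_k)\ot a_{k[1]}$, while on the right-hand side \equref{1.2.3} rewrites the coaction on $r_i(a_{k[1]})$ as a shift in the argument of $\gamma_A$, producing $\sum_{i,k}(a_{k[0]}\ot l_i(a_{k[1]}))\ot_B(r_i(a_{k[1]})\ot a'_k)\ot a_{k[2]}$. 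These two sums differ only by interchanging the inner coproduct legs $a_{k[1]}$ and $a_{k[2]}$, hence are equal by cocommutativity of $H$. This final appeal to cocommutativity is the crux of the lemma, and keeping the summation index $i$ correctly matched across the $\ot_B$ while the coproduct legs are shifted is the step most demanding of care.
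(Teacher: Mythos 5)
Your proposal is correct and takes essentially the same route as the paper's proof: flatness of $A^{\sq e}$ from \prref{2.7}, \leref{5.4} (resting on \equref{1.2.3}) for one equalizer condition, the mirror computation with \equref{1.2.4} and $S^2=\mathrm{id}$ for the other, both assembled via \leref{5.3}, and cocommutativity for the coaction-matching. You are merely more explicit on two points the paper leaves tacit, namely the equality of the two displayed forms of $x$ and the final $\ot_B^H$-verification, which the paper declares to follow immediately.
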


\begin{proof}
It follows from \prref{2.7} that $A^{\sq e}$ is flat as a left $B^e$-module.
Since $B^e$ is flat as a left $B$-module, we have that $A^{\sq e}$ is flat as a left $B$-module.
We have shown in \leref{5.4} that $x\in A^{e}\ot_B A^{\sq e}$. Now
\begin{eqnarray*}
&&\hspace*{-2cm}
\sum_{i,k} a_{k[0]}\ot l_i(a_{k[2]})\ot_B r_i(a_{k[2]})\ot a'_k\ot a_{k[1]}\\
&\equal{\equref{1.2.4}}&
\sum_{i,k} a_{k[0]}\ot l_i(a_{k[1]})_{[0]}\ot_B r_i(a_{k[1]})\ot a'_k\ot S( l_i(a_{k[1]})_{[1]}),
\end{eqnarray*}
so $x\in A\sq_H (A^{\rm op}\ot_B A^{\sq e})= A^{\sq e}\ot_B A^{\sq e}$, by \leref{5.3}.
It then follows immediately that $x\in A^{\sq e}\ot_B^H A^{\sq e}$.
\end{proof}

\begin{lemma}\lelabel{5.6}
We have an isomorphism of vector spaces
$f:\ A^{\sq e}\ot B\to A^{\sq e}\ot_B^H A^{\sq e}$,
given by the formula
$$f(\sum_k a_k\ot a'_k\ot b)=
\sum_{i,k} a_{k[0]}\ot bl_i(a_{k[1]})\ot_B r_i(a_{k[1]})\ot a'_k.$$
\end{lemma}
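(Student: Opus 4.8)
The plan is to show that $f$ lands in the generalized cotensor product and is well defined, and then to exhibit a two-sided inverse. First I would treat well-definedness. For $b=1_B$ the element $f(\sum_k a_k\ot a'_k\ot 1)$ is exactly the one shown to lie in $A^{\sq e}\ot_B^H A^{\sq e}$ in \leref{5.5}; the two expressions recorded there guarantee that the assignment depends only on $\sum_k a_k\ot a'_k\in A^{\sq e}$ (i.e. respects the defining $A\sq_H A^{\rm op}$ relation) and not on the chosen representative. For general $b$, observe that $f(\sum_k a_k\ot a'_k\ot b)$ equals $\sum_{i,k}(a_{k[0]}\ot l_i(a_{k[1]}))\ot_B(b\,r_i(a_{k[1]})\ot a'_k)$, i.e. the \leref{5.5} element with the second tensorand left multiplied, in its $A$-component, by $b\in B$. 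Since $b$ is $H$-coinvariant and the coaction on $A^{\sq e}$ is $B$-linear, this operation preserves the matching identity defining $A^{\sq e}\ot_B^H A^{\sq e}$, so the image still lies there. Bilinearity over $k$ is clear.

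Next I would construct the inverse $g$ by collapsing the two inner legs. Conceptually one multiplies the $A^{\rm op}$-component of the first tensorand against the $A$-component of the second and contracts by \equref{1.2.5}, $\sum_i l_i(h)r_i(h)=\varepsilon(h)1_A$, reading the surviving scalar into the $B$-factor, while \equref{1.2.6} reconstitutes the outer $A$-component and $(c\ot f)$ survives into $A^{\sq e}$. The subtlety is that a bare product is not $\ot_B$-balanced, because $b\in B$ need not commute with the $l_i,r_i$, so this contraction cannot be organised naively. The safe way is to pass first through \leref{5.3} and \leref{5.4}: these unfold each generalized cotensor product into an honest $H$-cotensor product of the shape $A\sq_H(A^{\rm op}\ot_B -)$, after which $f$ is identified with a twist of the canonical isomorphism $\can_{A^{\sq e}}$ of the Galois extension $A^{\sq e}/B^e$. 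That canonical map is bijective by \prref{2.7}, which yields bijectivity of $f$ and, read backwards, an explicit formula for $g$.

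Finally I would record the two verifications. The identity $g\circ f=\Id$ is short: feeding $f(\sum_k a_k\ot a'_k\ot b)$ into the contraction and applying \equref{1.2.5} and the counit returns $\sum_k a_{k[0]}\varepsilon(a_{k[1]})\ot a'_k\ot b=\sum_k a_k\ot a'_k\ot b$. The reverse identity $f\circ g=\Id$ is where the work concentrates: one must use the defining relation of $A^{\sq e}\ot_B^H A^{\sq e}$ (matching the two $H$-coactions) together with \equref{1.2.6} to rebuild a general element from its collapsed form. The main obstacle is precisely this balancedness bookkeeping over $B$ in the presence of a noncommuting central factor $b$; I expect to dispatch it by the \leref{5.3}--\leref{5.4} reduction above, falling back on the commutation rule of \leref{1.2} for the central contributions should a direct computation be preferred.
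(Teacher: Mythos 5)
Your overall plan coincides with the paper's (well-definedness from \leref{5.5}, an inverse obtained by collapsing the two inner legs, verifications via \equref{1.2.5} and \equref{1.2.6}), but two of your key assertions are incorrect, and together they leave the actual content of the proof missing. First, the rewriting in your well-definedness step is false. In $A^{\sq e}\ot_B^H A^{\sq e}$ the tensor $\ot_B$ balances the right $B$-action $(a\ot a')\cdot b=a\ot a'b$ on the first factor against left multiplication on the $A$-component of the second factor, whereas in the formula for $f$ the element $b$ multiplies $l_i(a_{k[1]})$ on the \emph{left}, i.e.\ it acts as $1\ot b\in B^e$, and this action does not pass across $\ot_B$. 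Concretely, your element $\sum_{i,k}(a_{k[0]}\ot l_i(a_{k[1]}))\ot_B\bigl(br_i(a_{k[1]})\ot a'_k\bigr)$ equals $\sum_{i,k}(a_{k[0]}\ot l_i(a_{k[1]})b)\ot_B\bigl(r_i(a_{k[1]})\ot a'_k\bigr)$, which differs from $f(\sum_k a_k\ot a'_k\ot b)$ whenever $b$ fails to commute with the $l_i(a_{k[1]})$: you have in effect replaced $bl_i$ by $l_ib$. The correct observation is that $f(\sum_k a_k\ot a'_k\ot b)$ is the element of \leref{5.5} with its \emph{first} tensorand multiplied on the right by $1\ot b\in B^e$; since this operation commutes with the balancing action of $B\ot 1$ and the coaction on $A^{\sq e}$ is right $B^e$-linear, it descends to the tensor product and preserves the matching condition.

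Second, and more seriously, your ``subtlety'' about the inverse is backwards, and the repair you propose is a genuine gap. The bare collapse $(a\ot a')\ot_B(a''\ot a''')\mapsto a\ot a'''\ot a'a''$ \emph{is} $\ot_B$-balanced for exactly the actions above, since $(a'b)a''=a'(ba'')$; this is precisely the paper's inverse $f^{-1}(y)=\sum_k a_k\ot a'''_k\ot a'_ka''_k$, and its well-definedness on the balanced tensor product is automatic. What is \emph{not} automatic --- and is where the paper's work concentrates --- is that this element lands in $A^{\sq e}\ot B$: one must use the coaction-matching conditions defining $A^{\sq e}\ot_B^H A^{\sq e}$ twice, once to show that $\sum_k a'_ka''_k$ is coinvariant in the relevant slot (so that $f^{-1}(y)\in A^e\ot B$), and once to show the outer pair still satisfies the cotensor relation (so that $f^{-1}(y)\in A^{\sq e}\ot B$); the verifications $f^{-1}\circ f=\Id$ and $f\circ f^{-1}=\Id$ then follow from \equref{1.2.5}, respectively from the matching condition together with \equref{1.2.6}. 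Your alternative --- identifying $f$ with ``a twist of $\can_{A^{\sq e}}$'' via \leref{5.3} and \leref{5.4} and citing \prref{2.7} --- is unsubstantiated and cannot work as stated: $\can_{A^{\sq e}}$ has source $A^{\sq e}\ot_{B^e}A^{\sq e}$ and target $A^{\sq e}\ot H$, neither of which matches the source $A^{\sq e}\ot B$ or target $A^{\sq e}\ot_B^H A^{\sq e}$ of $f$ (already for $A=H$, $B=k$ the two maps relate spaces of different dimensions). A descent-style argument in the spirit of \leref{7.2}, computing coinvariants of a suitable object of ${}_{A^{\sq e}}\Mm^H$, might be salvageable, but you would have to exhibit that module structure explicitly, and nothing of the sort appears in your sketch.
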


\begin{proof}
It follows from \leref{5.5} that $f$ is well-defined. The inverse of $f$ is defined as follows.
For $y= \sum_k a_k\ot a'_k\ot_B a''_k\ot a'''_k\in A^{\sq e}\ot_B^H A^{\sq e}$, we let
$$f^{-1}(y)=\sum_k a_k\ot a'''_k\ot a'_ka''_k.$$
Let us show that $f^{-1}$ is well-defined. First we show that $f^{-1}(y)\in A^e\ot B$.
Since $y\in A^{\sq e}\ot_B^H A^{\sq e}$, we have that
\begin{eqnarray*}
&&\hspace*{-2cm}
\sum _k a_k\ot a'''_k\ot a'_{k[0]}a''_{k[0]}\ot a'_{k[1]}a''_{k[1]}\\
&=& \sum _k a_k\ot a'''_k\ot a'_{k}a''_{k[0]}\ot S(a''_{k[2]})a''_{k[1]}\\
&=& \sum _k a_k\ot a'''_k\ot a'_{k}a''_k\ot 1.
\end{eqnarray*}
For any vector space $V$, we have that $(V\ot A)^{{\rm co}H}=V\ot B$ ($B$ is flat
over $k$), so the above computation shows that $f^{-1}(y)\in A^e\ot B$.\\
Let us next show that $f^{-1}(y)\in A^{\sq e}\ot B$: since
$y\in A^{\sq e}\ot_B^H A^{\sq e}$, we have that
$$\sum_k a_{k[0]}\ot a'_k\ot_B a''_k\ot a'''_{k}\ot a_{k[1]}=
\sum_k a_{k}\ot a'_k\ot_B a''_k\ot a'''_{k[0]}\ot S(a'''_{k[1]}),$$
hence
$$\sum_k a_{k[0]}\ot  a'''_{k}\ot a'_k a''_k\ot a_{k[1]}=
\sum_k a_{k}\ot a'''_{k[0]}\ot a'_k a''_k\ot S(a'''_{k[1]}).$$
Let us finally verify that $f$ and $f^{-1}$ are inverses.
\begin{eqnarray*}
&&\hspace*{-15mm}
(f^{-1}\circ f)(\sum_k a_k\ot a'_k\ot b)
=f^{-1} \bigl(\sum_{i,k} a_{k[0]}\ot bl_i(a_{k[1]})\ot_B r_i(a_{k[1]})\ot a'_k\bigr)\\
&=&
\sum_{i,k} a_{k[0]}\ot a'_k\ot bl_i(a_{k[1]}) r_i(a_{k[1]})
\equal{\equref{1.2.5}} \sum_k a_k\ot a'_k\ot b;
\end{eqnarray*}
\begin{eqnarray*}
&&\hspace*{-15mm}
(f\circ f^{-1})(\sum_k a_k\ot a'_k\ot_B a''_k\ot a'''_k)
=f(\sum_k a_k\ot a'''_k\ot a'_ka''_k)\\
&=&
\sum_{i,k} a_{k[0]}\ot a'_ka''_kl_i(a_{k[1]})\ot_B r_i(a_{k[1]})\ot a'''_k\\
&=&
\sum_{i,k} a_k\ot a'_ka''_{k[0]}l_i(a''_{k[1]})\ot_B r_i(a''_{k[1]})\ot a'''_k\\
&\equal{\equref{1.2.6}}&\sum_k a_k\ot a'_k\ot_B a''_k\ot a'''_k.
\end{eqnarray*}
\end{proof}

Take $M,M'\in {}_B\Mm_B$ and consider the composition
$\tilde{g}=({\rm id}\ot \can^{-1}\ot {\rm id}) \circ (\rho_A\ot {\rm id})$:
\begin{eqnarray*}
&&\hspace*{-2cm}
A^e\ot_{B^e}(M\ot_B M')\cong A\ot_B M\ot_BB\ot_B M'\ot_B A\\
&\longrightarrow& A\ot_B M\ot_BB\ot H\ot_B M'\ot_B A\\
&\longrightarrow& A\ot_B M\ot_BA\ot_BA\ot_B M'\ot_B A\\
&\cong& A^e\ot_{B^e} M\ot_B A^{e}\ot_{B^e}M'.
\end{eqnarray*}
We compute that
\begin{eqnarray*}
&&\hspace*{-2cm}
\tilde{g}(\sum_k a_k\ot a'_k\ot_{B^e}(m\ot_B m'))\\
&=&\sum_{i,k} (a_{k[0]}\ot l_i(a_{k[1]}))\ot_{B^e} m\ot_B
(r_i(a_{k[1]})\ot a'_k)\ot_{B^e} m'.
\end{eqnarray*}
It follows from \leref{5.6} that $\tilde{g}$ restricts to a map
$$g:\ A^{\sq e}\ot_{B^e}(M\ot_B M')\to (A^{\sq e}\ot_{B^e}M)\ot_B^H (A^{\sq e}\ot_{B^e}M').$$
It is obvious that $g\in {}_B\Mm_B^H$, and that $g$ is bijective with inverse
\begin{eqnarray}
&&\hspace*{-2cm}\eqlabel{5.7.1}
g^{-1}\bigl(\sum_k (a_k\ot a'_k)\ot_{B^e}m\ot_B (a''_k\ot a'''_k)\ot_{B^e}m'\bigr)\\
&=& \sum_k (a_k\ot a'''_k)\ot_{B^e}(ma'_ka''_k\ot_{B^e} m')\nonumber\\
&=& \sum_k (a_k\ot a'''_k)\ot_{B^e}(m\ot_{B^e} a'_ka''_k m').\nonumber
\end{eqnarray}
As a conclusion, we obtain the following Lemma.

\begin{lemma}\lelabel{5.7}
For $M,M'\in {}_B\Mm_B$, we have an isomorphism
$$g:\ A^{\sq e}\ot_{B^e}(M\ot_B M')\to (A^{\sq e}\ot_{B^e}M)\ot_B^H (A^{\sq e}\ot_{B^e}M').$$
\end{lemma}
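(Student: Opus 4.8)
The plan is to promote the map $\tilde g$ constructed immediately before the statement to the asserted isomorphism $g$, by checking three things in turn: that $\tilde g$ restricts to a map landing in the generalized cotensor product, that this restriction is a morphism in ${}_B\Mm_B^H$, and that it is bijective. First I would record that $\tilde g$ is a well-defined $B$-bimodule map: it is the composite of the canonical identifications $A^e\ot_{B^e}N\cong A\ot_B N\ot_B A$ (applied to $N=M\ot_B M'$ on the source side and to $N=M$, $N=M'$ on the target side) with the coaction $\rho_A$ and with $\can^{-1}$, the latter existing precisely because $A$ is a faithfully flat $H$-Galois extension (\thref{1.1}). Evaluating on a generator gives the formula $\tilde g\bigl(\sum_k(a_k\ot a'_k)\ot_{B^e}(m\ot_B m')\bigr)=\sum_{i,k}(a_{k[0]}\ot l_i(a_{k[1]}))\ot_{B^e}m\ot_B(r_i(a_{k[1]})\ot a'_k)\ot_{B^e}m'$.

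Next I would restrict $\tilde g$ to the subspace $A^{\sq e}\ot_{B^e}(M\ot_B M')$ and verify that its image lies in $(A^{\sq e}\ot_{B^e}M)\ot_B^H(A^{\sq e}\ot_{B^e}M')$, and not merely in the ordinary tensor product over $B$. This is the crux. Writing a generator with $\sum_k a_k\ot a'_k\in A^{\sq e}$, the two coefficient tensors $a_{k[0]}\ot l_i(a_{k[1]})$ and $r_i(a_{k[1]})\ot a'_k$ appearing in the formula are exactly the ones occurring in \leref{5.5}, which asserts that $\sum_{i,k}a_{k[0]}\ot l_i(a_{k[1]})\ot_B r_i(a_{k[1]})\ot a'_k$ lies in $A^{\sq e}\ot_B^H A^{\sq e}$. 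Tensoring that membership with $m$ and $m'$ over the relevant copies of $B^e$ places the image in the generalized cotensor product and shows that the restriction $g$ of $\tilde g$ is well-defined. That $g$ is $B$-bilinear and $H$-colinear is then immediate from the formula together with the coaction \equref{2.4.1} on $A^{\sq e}$, so $g$ is a morphism in ${}_B\Mm_B^H$.

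Finally I would prove bijectivity by exhibiting the inverse explicitly as the map \equref{5.7.1}, which collapses the two inner tensorands $a'_k$ and $a''_k$ through $M\ot_B M'$. To see that the two composites are identities I would run the same two cancellation computations as in the proof of \leref{5.6}: in one direction the telescoping is governed by $\sum_i l_i(h)r_i(h)=\varepsilon(h)1_A$ (formula \equref{1.2.5}), and in the other by the characterizing identity \equref{1.2.6} for $\gamma_A$. These are just the $M,M'$-decorated versions of the cancellations already performed for the maps $f$ and $f^{-1}$ in \leref{5.6}, so no new identities are needed and bijectivity of $g$ follows.

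The step I expect to be the main obstacle is the middle one: confirming that the output of $\tilde g$ genuinely lands in the generalized cotensor product $\ot_B^H$. This is where the $\sq_H$-condition defining $A^{\sq e}$ and the comodule-algebra identities of \seref{1} are actually used, rather than mere bookkeeping of tensor factors; but the work has been isolated into \leref{5.5} and \leref{5.6}, so once those are invoked the remaining verifications are routine.
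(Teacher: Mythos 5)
Your proposal is correct and follows essentially the same route as the paper: the paper likewise takes the composite $\tilde g=({\rm id}\ot\can^{-1}\ot{\rm id})\circ(\rho_A\ot{\rm id})$ with the same explicit formula, restricts it to the generalized cotensor product via the membership statement isolated in \leref{5.5}/\leref{5.6}, and establishes bijectivity against the explicit inverse \equref{5.7.1} by the same two cancellations \equref{1.2.5} and \equref{1.2.6} already performed for $f$ and $f^{-1}$ in \leref{5.6}. The only cosmetic difference is that the paper cites \leref{5.6} where you invoke \leref{5.5} directly, which is immaterial since the well-definedness of $f$ in \leref{5.6} rests precisely on \leref{5.5}.
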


\begin{remark}\relabel{5.8}
It follows from Lemmas \ref{le:5.1} and \ref{le:5.7} that, for $M,M',M''\in {}_B\Mm_B$, we
have isomorphisms
\begin{eqnarray*}
&&\hspace*{-2cm}
\bigl((M\ot H)\ot_B^H (M'\ot H)\bigr)\ot_B^H (M''\ot H)\\
&\cong& (M\ot H)\ot_B^H \bigl((M'\ot H)\ot_B^H (M''\ot H)\bigr),\\
&&\hspace*{-2cm}
\bigl((A^{\sq e}\ot_{B^e}M)\ot_B^H  (A^{\sq e}\ot_{B^e}M')\bigr)
\ot_B^H  (A^{\sq e}\ot_{B^e}M'')\\
&\cong &
(A^{\sq e}\ot_{B^e}M)\ot_B^H \bigl( (A^{\sq e}\ot_{B^e}M')
\ot_B^H  (A^{\sq e}\ot_{B^e}M'')\bigr)
\end{eqnarray*}
in ${}_B\Mm_B^H$ that are natural in $M,M',M''$.
\end{remark}

We now consider the notion of $H$-stability, as introduced before
\coref{4.10}, but with $B$ replaced by $B^e$ and $A$ by $A^{\sq
e}$. The $(B,B)$-bimodule $M$ is $H$-stable if there exists an
isomorphism \[\varphi_M:\ A^{\sq e}\ot_{B^e} M\to M\ot H\] in the
category ${}_B\Mm_B^H$.

\begin{proposition}\prlabel{5.9}
If $M,M'\in {}_B\Mm_B$ are $H$-stable, then $M\ot_B M'$ is also $H$-stable.
\end{proposition}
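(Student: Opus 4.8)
The plan is to build the required isomorphism $\varphi_{M\ot_B M'}\colon A^{\sq e}\ot_{B^e}(M\ot_B M')\to (M\ot_B M')\ot H$ in ${}_B\Mm_B^H$ by composing three isomorphisms that are already available: the map $g$ of \leref{5.7}, an isomorphism obtained by applying the $H$-stability data $\varphi_M$ and $\varphi_{M'}$ inside the generalized cotensor product, and the inverse of the map $\kappa$ of \leref{5.1}. Explicitly, I would set
$$\varphi_{M\ot_B M'}=\kappa^{-1}\circ(\varphi_M\ot_B^H\varphi_{M'})\circ g,$$
where $\varphi_M\colon A^{\sq e}\ot_{B^e}M\to M\ot H$ and $\varphi_{M'}\colon A^{\sq e}\ot_{B^e}M'\to M'\ot H$ are the isomorphisms in ${}_B\Mm_B^H$ witnessing the $H$-stability of $M$ and $M'$.

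For this composition to make sense, the first step I would carry out is to check that the generalized cotensor product $-\ot_B^H-$ is a bifunctor on ${}_B\Mm_B^H$, so that a pair of morphisms in ${}_B\Mm_B^H$ induces a morphism $\varphi_M\ot_B^H\varphi_{M'}$ between the cotensor products. This is a short verification: if $f\colon P\to P''$ and $f'\colon Q\to Q''$ are $H$-colinear $B$-bimodule maps and $\sum_i p_i\ot_B q_i\in P\ot_B^H Q$, then $H$-colinearity of $f$ and $f'$ (i.e. $f(p)_{[0]}\ot f(p)_{[1]}=f(p_{[0]})\ot p_{[1]}$, and similarly for $f'$) turns the defining equality of the cotensor product for $\sum_i p_i\ot_B q_i$ into the corresponding equality for $\sum_i f(p_i)\ot_B f'(q_i)$, so the latter lies in $P''\ot_B^H Q''$. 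Applying this to $\varphi_M$ and $\varphi_{M'}$ gives the middle isomorphism, whose target is $(M\ot H)\ot_B^H(M'\ot H)$.

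With the bifunctoriality in hand, the remaining steps are immediate. The source of $g$ is $A^{\sq e}\ot_{B^e}(M\ot_B M')$ and its target is $(A^{\sq e}\ot_{B^e}M)\ot_B^H(A^{\sq e}\ot_{B^e}M')$; applying $\varphi_M\ot_B^H\varphi_{M'}$ lands in $(M\ot H)\ot_B^H(M'\ot H)$; and $\kappa^{-1}$ from \leref{5.1} carries this isomorphically onto $(M\ot_B M')\ot H$. Each of the three maps is an isomorphism in ${}_B\Mm_B^H$, so the composite is an isomorphism of the required type, which is exactly the statement that $M\ot_B M'$ is $H$-stable. The main point to watch, rather than any deep obstacle, is the bookkeeping: one must confirm that $g$, the induced cotensor map, and $\kappa^{-1}$ are all simultaneously $B$-bilinear and $H$-colinear, but this is guaranteed because \leref{5.7} and \leref{5.1} already produce $g$ and $\kappa$ as morphisms in ${}_B\Mm_B^H$, and the cotensor map inherits these properties from $\varphi_M$ and $\varphi_{M'}$.
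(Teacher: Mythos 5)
Your proposal is correct and coincides with the paper's own proof: the paper defines $\varphi_{M\ot_B M'}$ by the commutativity of diagram \equref{5.9.1}, which is exactly your composite $\kappa^{-1}\circ(\varphi_M\ot_B^H\varphi_{M'})\circ g$ built from \leref{5.7} and \leref{5.1}. Your explicit verification that $-\ot_B^H-$ is functorial on morphisms of ${}_B\Mm_B^H$ (so that $\varphi_M\ot_B^H\varphi_{M'}$ is a well-defined isomorphism) is a detail the paper leaves implicit, and it is carried out correctly.
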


\begin{proof}
We define $\varphi_{M\ot_B M'}$ by the commutativity of the following diagram:
\begin{equation}\eqlabel{5.9.1}
\xymatrix{
A^{\sq e}\ot_{B^e} (M\ot_B M')\ar[rr]^{g}\ar[d]_{\varphi_{M\ot_B M'}}
&& (A^{\sq e}\ot_{B^e} M)\ot_B^H (A^{\sq e}\ot_{B^e} M')
\ar[d]^{\varphi_M\ot_B^H \varphi_{M'}}\\
M\ot_BM'\ot H\ar[rr]^{\kappa}&&(M\ot H)\ot_B^H(M'\ot H)}
\end{equation}
\end{proof}

Suppose that $M,M'\in {}_B\Mm_B$ are $H$-stable, and let $\psi_M=
\varphi_M^{-1}$, $\psi_{M'}=\varphi_{M'}^{-1}$, $t_M=\alpha_{\bf 12}^{-1}(\psi_M)$,
$t_{M'}=\alpha_{\bf 12}^{-1}(\psi_{M'})$. For later use, we compute
$t_{M\ot_B M'}=\alpha_{\bf 12}^{-1}(\psi_{M\ot_B M'})$ in terms of $t_M$ and $t_{M'}$.
To this end, we first introduce the following Sweedler-type notation for the map $t_M$:
$$t_M(h)(1_{A^{\sq \rm e}}\ot_{B^e} m)=(m(h)^+\ot m(h)^-)\ot_{B^e} m(h)^0.$$
Summation is implicitly understood.
Using the definition of $\alpha_{\bf 12}$ and the commutativity of
\equref{5.9.1}, we compute
\begin{align*}
 t&{}_{M\ot_B M'}(h)(1_{A^{\sq \rm e}}\ot_{B^e} (m\ot_Bm'))
=\psi_{M\ot_B M'}(m\ot_Bm'\ot h)\\
&=
\bigl( g^{-1}\circ (\psi_M\ot_B^H \psi_{M'})\circ k \bigr)(m\ot_Bm'\ot h)\\
&=
\bigl( g^{-1}\circ (\psi_M\ot_B^H \psi_{M'})\bigr)\bigl((m\ot h_{(1)})\ot_B (m'\ot h_{(2)})\bigr)\\
&= g^{-1}\bigl(
t(h_{(1)})(1_{A^{\sq \rm e}}\ot_{B^e} m)\ot_B t(h_{(2)})(1_{A^{\sq \rm e}}\ot_{B^e} m')\bigr)\\
&=
g^{-1}\Bigl(\bigl( (m(h_{(1)})^+\ot m(h_{(1)})^-)\ot_{B^e} m(h_{(1)})^0\bigr)\\
&\hspace*{1cm}\ot_B
\bigl( (m'(h_{(2)})^+\ot m'(h_{(2)})^-)\ot_{B^e} m'(h_{(2)})^0\bigr)\Bigr)\\
&= (m(h_{(1)})^+\ot m'(h_{(2)})^-)\ot_{B^e}
(m(h_{(1)})^0m(h_{(1)})^-m'(h_{(2)})^+\ot_B m'(h_{(1)})^0)\\
&= (m(h_{(1)})^+\ot m'(h_{(2)})^-)\ot_{B^e}
(m(h_{(1)})^0\ot_Bm(h_{(1)})^-m'(h_{(2)})^+ m'(h_{(1)})^0).
\end{align*}
We will need a slight improvement of this formula. For $b\in B$, we have
\begin{eqnarray*}
&&\hspace*{-15mm}
t_M(h)(1_{A^{\sq \rm e}}\ot_{B^e} mb) =t_M(h)((1\ot b)\ot_{B^e} m)\\
&=& (1\ot b)t_M(h)(1_{A^{\sq \rm e}}\ot_{B^e} m) = (m(h)^+\ot
m(h)^-b)\ot_{B^e} m(h)^0,
\end{eqnarray*}
hence
\begin{eqnarray}
\eqlabel{5.9.2}
&&\hspace*{-1cm}
t_{M\ot_B M'}(h)(1_{A^{\sq \rm e}}\ot_{B^e} (mb\ot_Bm'))
=
(m(h_{(1)})^+\ot m'(h_{(2)})^-)\\
&&\hspace*{1cm}\ot_{B^e}
(m(h_{(1)})^0m(h_{(1)})^-bm'(h_{(2)})^+\ot_B m'(h_{(1)})^0).\nonumber
\end{eqnarray}

We have that $B$ is a left $A^{\sq e}$-module, with action
$\phi_B((\sum_k a_k\ot a'_k)\ot_{B^e} b) = \sum_k a_kba'_k$. The
corresponding map $\varphi_B=\beta_1(\phi_B):\ A^{\sq e}\ot_{B^e} B\to B\ot H$
is given by
$$\varphi_B((\sum_k a_k\ot a'_k)\ot_{B^e} b)
=\sum_k a_{k[0]}ba'_k\ot a_{k[1]}= \sum_k a_kba'_{k[0]}\ot
S(a_{k[1]}).$$ It follows from \coref{4.10} and \prref{4.11} that
$\varphi_B$ is an isomorphism in ${}_B\Mm_B^H$.

Now take $\ul{M}=(B,B,M,N,\alpha,\beta)\in \dul{\Pic}(B)$. We call $\ul{M}$
$H$-stable if there exist isomorphisms
$$\varphi_M:\ A^{\sq e}\ot_{B^e} M\to M\ot H\quad {\rm and} \quad
\varphi_N:\ A^{\sq e}\ot_{B^e} N\to N\ot H$$
such that the following diagrams commute:
\begin{equation}\eqlabel{5.9.1a}
\xymatrix{
A^{\sq e}\ot_{B^e} (M\ot_B N)\ar[rr]^{A^{\sq e}\ot_{B^e}\alpha}
\ar[d]_{\varphi_{M\ot_B N}}&&
A^{\sq e}\ot_{B^e} B \ar[d]^{\varphi_B}\\
M\ot_BN\ot H\ar[rr]^{\alpha\ot H} &&B\ot H}
\end{equation}
\begin{equation}\eqlabel{5.9.2a}
\xymatrix{
A^{\sq e}\ot_{B^e} (N\ot_B M)\ar[rr]^{A^{\sq e}\ot_{B^e}\beta}
\ar[d]_{\varphi_{N\ot_B M}}&&
A^{\sq e}\ot_{B^e} B \ar[d]^{\varphi_B}\\
N\ot_BM\ot H\ar[rr]^{\beta\ot H} &&B\ot H}
\end{equation}

\begin{theorem}\thlabel{5.10}
Let $H$ be a cocommutative Hopf algebra, and let $A$ be a faithfully flat
Hopf-Galois extension of $A^{{\rm co}H}=B$. Then
$$\Pic(B)^H=\{[\ul{M}]\in \Pic(B)~|~\ul{M}~{\rm is~}H\hbox{-}{\rm stable}\}$$
is a subgroup of $\Pic(B)$, called the $H$-stable part of
$\Pic(B)$.
\end{theorem}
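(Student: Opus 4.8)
The plan is to verify the three subgroup axioms for $\Pic(B)^H$: that the unit object is $H$-stable, that $H$-stability is preserved by passing to the inverse, and that it is preserved by the tensor product. The first two are formal, and essentially all of the work sits in the last one.

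First I would dispose of the unit and the inverse. The unit object of $\dul{\Pic}(B)$ is $(B,B,B,B,B,B)$, and we already have at our disposal the isomorphism $\varphi_B:\ A^{\sq e}\ot_{B^e}B\to B\ot H$ in ${}_B\Mm_B^H$ constructed just before the statement. Taking $\varphi_M=\varphi_N=\varphi_B$, the squares \equref{5.9.1a} and \equref{5.9.2a} reduce to the compatibility of $\varphi_B$ with the multiplication isomorphisms $B\ot_B B\cong B$, which is immediate; hence the unit is $H$-stable. For the inverse, recall that the inverse of $\ul{M}=(B,B,M,N,\alpha,\beta)$ is $(B,B,N,M,\beta,\alpha)$, obtained by interchanging $M\leftrightarrow N$ and $\alpha\leftrightarrow\beta$. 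The definition of $H$-stability is symmetric under this interchange: it asks for isomorphisms $\varphi_M$ and $\varphi_N$ subject to the two diagrams \equref{5.9.1a} and \equref{5.9.2a}, and swapping the data simply swaps these two diagrams. Thus the inverse is $H$-stable with the same pair of isomorphisms in the opposite roles.

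The substantial part is closure under the tensor product. Given $H$-stable $\ul{M}=(B,B,M,N,\alpha,\beta)$ and $\ul{M}'=(B,B,M',N',\alpha',\beta')$, their product is $(B,B,M\ot_B M',N'\ot_B N,\alpha'',\beta'')$ with $\alpha''=\alpha\circ(M\ot_B\alpha'\ot_B N)$ and $\beta''=\beta'\circ(N'\ot_B\beta\ot_B M')$. By \prref{5.9} the underlying bimodules $M\ot_B M'$ and $N'\ot_B N$ are again $H$-stable, with isomorphisms $\varphi_{M\ot_B M'}$ and $\varphi_{N'\ot_B N}$ determined by the defining square \equref{5.9.1}; these are assembled from $\varphi_M,\varphi_{M'}$ (resp.\ $\varphi_{N'},\varphi_N$) together with the monoidal comparison isomorphisms $g$ of \leref{5.7} and $\kappa$ of \leref{5.1}. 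It then remains only to check that this chosen pair makes the squares \equref{5.9.1a} and \equref{5.9.2a} commute for the product context.

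I would prove this by a diagram chase. To verify \equref{5.9.1a} for the product, I factor $\alpha''$ as $M\ot_B\alpha'\ot_B N$ followed by $\alpha$, insert the comparison isomorphisms $g$ (\leref{5.7}) and $\kappa$ (\leref{5.1}), and reorganize the quadruple tensor factor $M\ot_B M'\ot_B N'\ot_B N$ so as to isolate $M'\ot_B N'$ in the middle, using the coherence (naturality of the associativity constraints) recorded in \reref{5.8}. The outer rectangle then decomposes into the square \equref{5.9.1a} for $\ul{M}'$ applied to $\alpha'$ — which produces $\varphi_B$ in the middle $B\ot H$ factor — glued, via the compatibility of $\varphi_B$ with the contraction $M\ot_B B\ot_B N\cong M\ot_B N$, to the square \equref{5.9.1a} for $\ul{M}$ applied to $\alpha$. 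The verification of \equref{5.9.2a} is entirely analogous with $\beta,\beta'$ in place of $\alpha,\alpha'$. The main obstacle is precisely this bookkeeping: one must track how the definition \equref{5.9.1} of $\varphi_{M\ot_B M'}$ interacts with the contraction $\alpha'$ sitting in the middle tensor factor, and confirm that the naturality of $g$ and $\kappa$ in their bimodule arguments (together with \reref{5.8}) is strong enough to let the two given stability squares be composed. Once this is set up correctly, each cell of the resulting diagram commutes by one of \equref{5.9.1}, \equref{5.9.1a}, \leref{5.7}, \leref{5.1}, or naturality, and the outer rectangle is the required square, completing the proof that $\Pic(B)^H$ is a subgroup.
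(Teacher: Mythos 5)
Your proposal is correct and takes essentially the same route as the paper's proof: closure under the tensor product via \prref{5.9} followed by a commutative-diagram argument using the coherence of \reref{5.8} and the comparison isomorphisms of Lemmas \ref{le:5.1} and \ref{le:5.7}, and closure under inverses by the symmetry of the diagrams (\ref{eq:5.9.1a})--(\ref{eq:5.9.2a}) under interchanging $M\leftrightarrow N$ and $\alpha\leftrightarrow\beta$. The only difference is that you verify $H$-stability of the unit object explicitly via $\varphi_B$, a point the paper leaves implicit.
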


\begin{proof}
Assume that $\ul{M}_1$ and $\ul{M}_2$ are $H$-stable. It follows from \prref{5.9}
that $M_1\ot_B M_2$ and $N_2\ot_B N_1$ are $H$-stable. A commutative diagram
argument taking \reref{5.8} into account shows that the diagrams (\ref{eq:5.9.1a}-\ref{eq:5.9.2a}),
with $M$ replaced by $M_1\ot_B M_2$ and $N$ by $N_2\ot_B N_1$, commute.
This implies that $\ul{M}_1\ot_B\ul{M}_2$ is $H$-stable. Finally, if $\ul{M}$ is
$H$-stable, then it is clear from the definition that $\ul{M}^{-1}=
(B,B,N,M,\beta,\alpha)$ is also $H$-stable.
\end{proof}

\section{A Hopf algebra version of the Beattie-del R\'{\i}o \\ exact sequence}\selabel{6}
As in the previous Section, let $H$ be a cocommutative Hopf algebra, and $A$
a faithfully flat $H$-Galois extension of $B$. Take $\ul{M}\in \dul{\Pic}(B)^H$.
Then we have an isomorphism
$\varphi:\ A^{\sq e}\ot_{B^e} M\to M\ot H$
in ${}_B\Mm_B^H$. We have that
$E={}_{A^{\sq e}}\END(A^{\sq e}\ot_{B^e}M)^{\rm op}$
is an $H$-comodule algebra.

\begin{lemma}\lelabel{6.1}
$E^{{\rm co}H}\cong Z(B)$.
\end{lemma}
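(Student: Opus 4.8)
The plan is to reduce the statement to two facts already at our disposal: the general description of the coinvariants of an $\END$-comodule algebra given in the discussion preceding \thref{4.5}, and the classical identification of the bimodule endomorphism ring of an invertible bimodule with the center.

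First I would set up the right instance of the general machinery. By \prref{2.7}, $A^{\sq e}=A\sq_H A^{\rm op}$ is a faithfully flat $H$-Galois extension of $B^e=B\ot B^{\rm op}$. Since $\ul{M}=(B,B,M,N,\alpha,\beta)$ is an object of $\dul{\Pic}(B)$, the bimodule $M$ is in particular a left $B^e$-module via $(b\ot b')\cdot m=bmb'$. Thus I am exactly in the situation of the paragraph before \thref{4.5}, but with the pair $(A,B)$ replaced by $(A^{\sq e},B^e)$ and the left module there taken to be the left $B^e$-module $M$. That general result, which rests on the category equivalence ${}_{B^e}\Mm\simeq{}_{A^{\sq e}}\Mm^H$ supplied by \thref{1.1}, yields
$$E^{{\rm co}H}={}_{A^{\sq e}}\End^H(A^{\sq e}\ot_{B^e}M)^{\rm op}\cong{}_{B^e}\End(M)^{\rm op}.$$
So the whole problem is reduced to computing ${}_{B^e}\End(M)$, the ring of $(B,B)$-bimodule endomorphisms of $M$.

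Next I would compute this endomorphism ring using the invertibility of $M$. Because $\ul{M}$ is a \emph{strict} context, $\alpha\colon M\ot_B N\to B$ and $\beta\colon N\ot_B M\to B$ are bimodule isomorphisms, so $M$ is an invertible $B$-bimodule with inverse $N$. Consequently $-\ot_B N\colon{}_B\Mm_B\to{}_B\Mm_B$ is an autoequivalence with quasi-inverse $-\ot_B M$, sending $M$ to $M\ot_B N\cong B$. An equivalence induces ring isomorphisms on endomorphism rings, so
$${}_{B^e}\End(M)\cong{}_{B^e}\End(M\ot_B N)\cong{}_{B^e}\End(B)\cong Z(B),$$
the last arrow being the standard identification $g\mapsto g(1)$ of the bimodule endomorphisms of $B$ with its center. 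Concretely this composite sends $z\in Z(B)$ to the endomorphism $m\mapsto zm$ of $M$. Finally, as $Z(B)$ is commutative, ${}_{B^e}\End(M)^{\rm op}\cong Z(B)^{\rm op}=Z(B)$, which combined with the previous paragraph gives $E^{{\rm co}H}\cong Z(B)$.

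Most of this is bookkeeping: checking that the $B^e$-module structure on $M$ is the one required to invoke the formula $E^{{\rm co}H}\cong{}_{B^e}\End(M)^{\rm op}$, and that each displayed arrow is a ring homomorphism rather than merely additive. The one point I regard as deserving genuine care is that the composite ${}_{B^e}\End(M)\cong Z(B)$ is multiplicative and correctly matches the two ring structures; once the autoequivalence $-\ot_B N$ is in place this is automatic, so I anticipate no serious obstacle. (One could also record that the Miyashita-Ulbrich $H$-action on $Z(B)$ induced by $E$ coincides with the one coming from $A$, as noted in the introduction, but this refinement is not needed for the bare algebra isomorphism asserted in the lemma.)
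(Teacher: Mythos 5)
Your proposal is correct and follows essentially the same route as the paper: both reduce $E^{{\rm co}H}$ to ${}_{B^e}\End(M)$ via the category equivalence ${}_{B^e}\Mm\simeq{}_{A^{\sq e}}\Mm^H$ coming from \thref{1.1} and \prref{2.7}, and then use the autoequivalence of ${}_B\Mm_B$ furnished by the strict Morita context to identify ${}_B\End_B(M)\cong{}_B\End_B(B)\cong Z(B)$. The only (immaterial) differences are that you tensor with $N$ where the paper tensors with $M$, and that you explicitly track the ${}^{\rm op}$ and discharge it by commutativity of $Z(B)$, a point the paper leaves tacit.
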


\begin{proof}
We first observe that
$$E^{{\rm co}H}= {}_{A^{\sq e}}\End^H(A^{\sq e}\ot_{B^e}M)
\cong {}_{B^e}\End(M)={}_B\End_B(M).$$
The second isomorphism is due to the fact that
$A^{\sq e}\ot_{B^e}-:\ {}_{B^e}\Mm\to {}_{A^{\sq e}}\Mm^H$
is a category equivalence, by \thref{1.1} and \prref{2.7}. Since $\ul{M}$
is a strict Morita context, we have that
$-\ot_B M$ is an autoequivalence of $\Mm_B$.
$-\ot_B M$ and its adjoint send $B$-bimodules to
$B$-bimodules, so $-\ot_B M$ also defines an autoequivalence of ${}_B\Mm_B$.
Consequently ${}_B\End_B(M)\cong {}_B\End_B(B)\cong Z(B)$.
\end{proof}

For later use, we give an explicit description of the isomorphism
$$\lambda:\ Z(B)\to E^{{\rm co}H}= {}_{A^{\sq e}}\End^H(A^{\sq e}\ot_{B^e}M),
~~~x\mapsto \lambda_x:$$
\begin{equation}\eqlabel{6.1.1}
\lambda_x(\sum_k(a_k\ot a'_k)\ot_{B^e}m)=
\sum_k(a_k\ot a'_k)\ot_{B^e}xm.
\end{equation}

We have seen in \thref{4.5} that there are isomorphisms
$$\alpha_{\bf 12}:\ \Hom^H(H,E)\to {}_B\Hom_B^H(M\ot H, A^{\sq e}\ot_{B^e} M),$$
$$\alpha_{\bf 21}:\ \Cc({\bf 2},{\bf 1})\to  {}_B\Hom_B^H(A^{\sq e}\ot_{B^e} M, M\ot H).$$
Using \prref{2.7}, we compute $u=\alpha_{\bf 21}^{-1}(\varphi)$ and
$t=\alpha_{\bf 12}^{-1}(\varphi^{-1})$:
\begin{equation}\eqlabel{6.1.2}
t(h)\bigl( \sum_k(a_k\ot a'_k)\ot_{B^e}m\bigr)
= \sum_k(a_k\ot a'_k)\varphi^{-1}(m\ot h);
\end{equation}
\begin{eqnarray}\eqlabel{6.1.3}
&&\hspace*{-15mm}
u(h)\bigl( \sum_k(a_k\ot a'_k)\ot_{B^e}m\bigr)
=\sum_{i,j,k}
\bigl( a_kl_i(h_{(1)})\ot r_j(h_{(2)})a'_k\bigr)\\
&&~~~~~\ot_{B^e}
\phi \bigl( (r_i(h_{(1)})\ot l_j(h_{(2)}))\ot_{B^e} m\bigr).\nonumber
\end{eqnarray}
Since $E^{{\rm co}H}\cong Z(B)$ is commutative, we can apply
\prref{4.2}, and we find that $Z(B)$ is a left $H$-module algebra.
We will show in \prref{6.4} that the left $H$-action on $Z(B)$ is
independent of the choice of $\ul{M}\in \dul{\Pic}(B)^H$, and is
given by the Miyashita-Ulbrich action \equref{1.2.8}.

\begin{lemma}\lelabel{6.3}
For $x\in Z(B)$, $m\in M$ and $h\in H$, we have that
\begin{equation}\eqlabel{6.3.1}
\lambda_x(\varphi^{-1}(m\ot h))=\varphi^{-1}((h_{(2)}\cdot x)m\ot h_{(1)}).
\end{equation}
\end{lemma}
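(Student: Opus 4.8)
The plan is to apply $\lambda_x$ to $\varphi^{-1}(m\ot h)$ by hand and then shuttle the central element $x$ across $\ot_{B^e}$ and through the two factors of $A^{\sq e}$, using the Miyashita-Ulbrich commutation rule \leref{1.2} to produce a central scalar that the $H$-colinearity of $\varphi$ will convert into $h_{(2)}\cdot x$. Write $\varphi^{-1}(m\ot h)=(m(h)^+\ot m(h)^-)\ot_{B^e}m(h)^0$ in the Sweedler-type notation introduced just before \thref{5.10} (summation understood); by \equref{6.1.2} this is nothing but $t(h)(1_{A^{\sq e}}\ot_{B^e}m)$. Since $\lambda_x$ multiplies only the $M$-slot by $x$ on the left (see \equref{6.1.1}) and $x\in Z(B)\subset B$, I would rewrite $x\cdot m(h)^0=(x\ot 1)\cdot m(h)^0$ and balance $(x\ot 1)\in B^e$ back across $\ot_{B^e}$ onto the right of $A^{\sq e}$; the right $B^e$-action multiplies the first factor by $x$, giving
$$\lambda_x(\varphi^{-1}(m\ot h))=(m(h)^+x\ot m(h)^-)\ot_{B^e}m(h)^0.$$

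Next I would invoke the second commutation formula of \equref{1.2.9}, namely $ax=(a_{[1]}\cdot x)a_{[0]}$, applied to $a=m(h)^+\in A$. This turns $m(h)^+x$ into $(m(h)^+_{[1]}\cdot x)m(h)^+_{[0]}$, where the scalar $m(h)^+_{[1]}\cdot x$ lies in $Z(B)$ (as recorded in the proof of \leref{1.2}). Recognising this central scalar as the left $B^e$-action of $(m(h)^+_{[1]}\cdot x\ot 1)$ on the first factor, I obtain
$$\lambda_x(\varphi^{-1}(m\ot h))=(m(h)^+_{[1]}\cdot x\ot 1)\cdot\bigl((m(h)^+_{[0]}\ot m(h)^-)\ot_{B^e}m(h)^0\bigr).$$

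The decisive step is to feed in the $H$-colinearity of $\varphi^{-1}$. The coaction on $A^{\sq e}\ot_{B^e}M$ acts through the $A$-factor, so the bracketed term paired with the tag $m(h)^+_{[1]}$ is exactly $\rho\bigl(\varphi^{-1}(m\ot h)\bigr)$; colinearity of $\varphi^{-1}$ as a morphism in ${}_B\Mm_B^H$ (where $M\ot H$ carries the coaction $m\ot\Delta(h)$) gives $\rho\bigl(\varphi^{-1}(m\ot h)\bigr)=\varphi^{-1}(m\ot h_{(1)})\ot h_{(2)}$. Substituting replaces $m(h)^+_{[1]}$ by $h_{(2)}$ and the bracket by $\varphi^{-1}(m\ot h_{(1)})$, leaving $(h_{(2)}\cdot x\ot 1)\cdot\varphi^{-1}(m\ot h_{(1)})$. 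Finally, since $\varphi^{-1}$ is $B^e$-linear and $(h_{(2)}\cdot x\ot 1)$ acts on $M\ot H$ as $(h_{(2)}\cdot x)m\ot h_{(1)}$, I would pull the scalar back through $\varphi^{-1}$ to reach the claim.

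I expect the main obstacle to be purely bookkeeping: one must keep the two coactions straight, ensuring that the single $A$-coaction on the first factor of $A^{\sq e}$ is what drives both the commutation rule \leref{1.2} and the colinearity tag, and verify that the central scalar produced by the commutation rule is genuinely the $H$-component that the colinearity substitution turns into $h_{(2)}$. No extra cocommutativity input beyond what already makes $A^{\sq e}$ an $H$-Galois extension of $B^e$ is required, since \leref{1.2} is applied directly to the original extension $A$ with $m(h)^+\in A$.
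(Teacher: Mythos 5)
Your proposal is correct and is essentially the paper's own argument run in reverse: the paper computes $\varphi^{-1}(xm\ot h)$ in components, applies the first formula of \equref{1.2.9} ($xa=a_{[0]}(S(a_{[1]})\cdot x)$), and then must untwist at the end via the substitution $x\mapsto h_{(3)}\cdot x$ together with $S(h_{(2)})h_{(3)}=\varepsilon(h_{(2)})1$, whereas you start from $\lambda_x(\varphi^{-1}(m\ot h))$ and use the second formula ($ax=(a_{[1]}\cdot x)a_{[0]}$), which makes that final step unnecessary. All the essential ingredients --- shuttling the central element across $\ot_{B^e}$, the Miyashita--Ulbrich commutation rule of \leref{1.2}, and the colinearity substitution replacing the tag $m(h)^+_{[1]}$ by $h_{(2)}$ --- coincide with those in the paper's proof.
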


\begin{proof}
Write
\begin{equation}\eqlabel{6.3.2}
\varphi^{-1}(m\ot h)=\sum_k (s_k\ot s'_k)\ot_{B^e} m_k\in A^{\sq e}\ot_{B^e}M.
\end{equation}
Since $\varphi^{-1}$ is right $H$-colinear, we have that
\begin{equation}\eqlabel{6.3.3}
\varphi^{-1}(m\ot h_{(1)})\ot h_{(2)}=\sum_k (s_{k[0]}\ot s'_k)\ot_{B^e} m_k\ot s_{k[1]}.
\end{equation}
Then we compute
\begin{eqnarray*}
&&\hspace*{-15mm}
\varphi^{-1}(xm\ot h)=x\varphi^{-1}(m\ot h)\\
&\equal{\equref{6.3.2}}& \sum_k (xs_k\ot s'_k)\ot_{B^e} m_k
\equal{\equref{1.2.9}}
\sum_k (s_{k[0]}(S(s_{k[1]})\cdot x)\ot s'_k)\ot_{B^e} m_k\\
&=&
\sum_k (s_{k[0]}\ot s'_k)\ot_{B^e} (S(s_{k[1]})\cdot x)m_k
\equal{(\ref{eq:6.1.1},\ref{eq:6.3.3})}
\lambda_{S(h_{(2)})\cdot x}(\varphi^{-1}(m\ot h_{(1)}).
\end{eqnarray*}
and it follows that
$$\varphi^{-1}((h_{(2)}\cdot x)m\ot h_{(1)})=
\lambda_{S(h_{(2)})\cdot (h_{(3)}\cdot x)}(\varphi^{-1}(m\ot h_{(1)}))=
\lambda_x(\varphi^{-1}(m\ot h)).$$
\end{proof}

\begin{proposition}\prlabel{6.4}
Assume that $\ul{M}\in \dul{\Pic}(B)$ is $H$-stable. The corresponding left
$H$-action on $E^{{\rm co}H}$ is given by the formula
$h\bullet \lambda_x=\lambda_{h\cdot x}$,
for all $x\in Z(B)$. This means that the transported action on
$Z(B)$ is the Miyashita-Ulbrich action given by
{\rm\equref{1.2.9}}.
\end{proposition}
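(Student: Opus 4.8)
The plan is to compute the transported action $h\bullet\lambda_x=\omega_t(h\ot\lambda_x)$ explicitly and match it against $\lambda_{h\cdot x}$, testing both on the generators $1_{A^{\sq e}}\ot_{B^e}m$ of $A^{\sq e}\ot_{B^e}M$ over $A^{\sq e}$. Since $\omega_t$ takes values in $E^{{\rm co}H}$ (\prref{4.2}) and every element of $E^{{\rm co}H}$ is an $A^{\sq e}$-linear endomorphism, two such maps coincide as soon as they agree on these generators, and $\lambda_{h\cdot x}(1_{A^{\sq e}}\ot_{B^e}m)=1_{A^{\sq e}}\ot_{B^e}(h\cdot x)m$ by \equref{6.1.1}. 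By \prref{4.2} the action is $h\bullet\lambda_x=t(h_{(1)})\lambda_x u(h_{(2)})$, the product taken in $E={}_{A^{\sq e}}\END(A^{\sq e}\ot_{B^e}M)^{\rm op}$; as this product is composition of maps in reverse order, $h\bullet\lambda_x$ acts as the operator $u(h_{(2)})\circ\lambda_x\circ t(h_{(1)})$, where $t=\alpha_{\bf 12}^{-1}(\varphi^{-1})$ and $u=\alpha_{\bf 21}^{-1}(\varphi)$ are normalised so that $t(1)=u(1)=1$.

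First I would record the effect of $t$ on a generator: by \equref{6.1.2}, $t(h)(1_{A^{\sq e}}\ot_{B^e}m)=\varphi^{-1}(m\ot h)$, and dually $\varphi^{-1}(m'\ot k)=t(k)(1_{A^{\sq e}}\ot_{B^e}m')$. Applying $t(h_{(1)})$, then $\lambda_x$, and using \leref{6.3} to pull $\lambda_x$ through $\varphi^{-1}$, I obtain
\[
(h\bullet\lambda_x)(1_{A^{\sq e}}\ot_{B^e}m)=u(h_{(3)})\bigl(\varphi^{-1}((h_{(2)}\cdot x)m\ot h_{(1)})\bigr)=u(h_{(3)})\bigl(t(h_{(1)})(1_{A^{\sq e}}\ot_{B^e}(h_{(2)}\cdot x)m)\bigr),
\]
where the passage from the two-fold to the three-fold coproduct uses coassociativity. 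Thus everything reduces to understanding the operator $u(h_{(3)})\circ t(h_{(1)})$ with the middle leg $h_{(2)}$ left free.

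The crux is the identity $\sum u(h_{(3)})\circ t(h_{(1)})\ot h_{(2)}=\Id\ot h$ in ${}_{A^{\sq e}}\END(A^{\sq e}\ot_{B^e}M)\ot H$. Rewritten as a product in $E$, it reads $\sum t(h_{(1)})u(h_{(3)})\ot h_{(2)}=1_E\ot h$; here the free index sits \emph{between} the two factors, so the convolution relation $t*u=\varepsilon 1_E$ does not apply directly. This is the main obstacle, and it is dissolved precisely by the hypothesis that $H$ is cocommutative: the three-fold coproduct is invariant under permuting its tensor legs, so $\sum t(h_{(1)})u(h_{(3)})\ot h_{(2)}=\sum t(h_{(1)})u(h_{(2)})\ot h_{(3)}$, whereupon the first two legs collapse through $t*u=\varepsilon 1_E$ to give $1_E\ot h$. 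Finally I would feed this identity into the displayed computation via the linear evaluation $f\ot k\mapsto f(1_{A^{\sq e}}\ot_{B^e}(k\cdot x)m)$: it sends the left-hand side to $(h\bullet\lambda_x)(1_{A^{\sq e}}\ot_{B^e}m)$ and the right-hand side $\Id\ot h$ to $1_{A^{\sq e}}\ot_{B^e}(h\cdot x)m=\lambda_{h\cdot x}(1_{A^{\sq e}}\ot_{B^e}m)$, proving $h\bullet\lambda_x=\lambda_{h\cdot x}$. The identification of the transported action on $Z(B)$ with the Miyashita–Ulbrich action \equref{1.2.9} is then automatic, since $h\cdot x$ is exactly the action already used in \leref{6.3}.
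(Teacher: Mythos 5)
Your proof is correct, and it reaches the conclusion by a genuinely different route than the paper. Both arguments share the same two opening moves — the formula \equref{6.1.2} for $t(h)$ and the commutation rule of \leref{6.3} — but from there the paper proceeds by explicit coordinates: it writes $\varphi^{-1}((h_{(2)}\cdot x)m\ot h_{(1)})=\sum_q(s_q\ot s'_q)\ot_{B^e}m_q$, uses $H$-colinearity of $\varphi^{-1}$ to trade the remaining coproduct leg of $h$ for the coaction on the $s_q$, and then expands $u$ via the explicit formula \equref{6.1.3} in the Galois symbols $l_i,r_i$ of $\gamma_{A^{\sq e}}$ from \prref{2.7}, collapsing everything with \equref{1.2.6}, \equref{2.7.1} and left $B^e$-linearity of $\phi$. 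You never touch \equref{6.1.3}: you reduce to the generators $1_{A^{\sq e}}\ot_{B^e}m$ (legitimate, since all maps in sight are left $A^{\sq e}$-linear), reconvert $\varphi^{-1}((h_{(2)}\cdot x)m\ot h_{(1)})$ back into $t$ via \equref{6.1.2}, and isolate the convolution identity $\sum t(h_{(1)})u(h_{(3)})\ot h_{(2)}=1_E\ot h$, proved by permuting the legs of the threefold coproduct — the standing cocommutativity hypothesis of \seref{6} — and then applying $t*u=\varepsilon 1_E$, which is available because $t=\alpha_{\bf 12}^{-1}(\varphi^{-1})$ and $u=\alpha_{\bf 21}^{-1}(\varphi)$ are mutual convolution inverses (comments following \coref{4.10}); your evaluation map $f\ot k\mapsto f(1_{A^{\sq e}}\ot_{B^e}(k\cdot x)m)$ correctly handles the fact that the middle leg $h_{(2)}$ is not free but enters linearly. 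What your approach buys: it is shorter, avoids the $\gamma_{A^{\sq e}}$-calculus entirely, and makes the use of cocommutativity explicit and localized in a single leg-swap, whereas in the paper's proof cocommutativity is used silently (e.g.\ in the colinearity step, where the free leg appears as $h_{(1)}$, and in writing the action as $u(h_{(1)})\circ\lambda_x\circ t(h_{(2)})$ rather than $u(h_{(2)})\circ\lambda_x\circ t(h_{(1)})$). What the paper's computation buys: it stays in the same explicit Sweedler-type coordinates that are reused in the $1$-cocycle computation of \seref{7}, so it doubles as a check of the formulas (\ref{eq:6.1.2})--(\ref{eq:6.1.3}); your argument, by contrast, cleanly exposes that the only structural inputs are \leref{6.3}, two-sided convolution invertibility of $t$, and cocommutativity.
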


\begin{proof}
Take $x\in Z(B)$ and the corresponding $\lambda_x\in E^{{\rm co}H}$. The
action of $h\in H$ on $\lambda_x$ is given by (see \prref{4.2})
$$h\bullet \lambda_x=u(h_{(1)})\circ \lambda_x\circ t(h_{(2)}),$$
and we have
\begin{eqnarray*}
\diamond&:=& (h\bullet \lambda_x)(\sum_k (a_k\ot a'_k)\ot_{B^e} m))\\
&\equal{\equref{6.1.2}}&
(u(h_{(1)})\circ \lambda_x)\bigl(\sum_k (a_k\ot a'_k)\varphi^{-1}((h_{(3)}\cdot x)m\ot h_{(2)}))\\
&\equal{\equref{6.3.1}}&
u(h_{(1)})\bigl(\sum_k (a_k\ot a'_k)\varphi^{-1}((h_{(3)}\cdot x)m\ot h_{(2)})\bigr).
\end{eqnarray*}
Now write
\begin{equation}\eqlabel{6.4.1}
\varphi^{-1}((h_{(2)}\cdot x)m\ot h_{(1)})=
\sum_q (s_q\ot s'_q)\ot_{B^e} m_q.
\end{equation}
Since $\varphi^{-1}$ is right $H$-colinear, we have that
$$
\varphi^{-1}((h_{(3)}\cdot x)m\ot h_{(2)})\ot h_{(1)}=
\sum_q (s_{q[0]}\ot s'_q)\ot_{B^e} m_q\ot s_{q[1]},
$$
hence
\begin{align*}
&
\diamond= u(s_{q[1]})\bigl( \sum_{q,k} (a_ks_{q[0]}\ot s'_qa'_k)\ot_{B^e} m_q)\\
&\equal{\equref{6.1.3}}
\sum_{i,j,k,q}\bigl( q_ks_{q[0]}l_i(s_{q[1]})\ot r_j(s_{q[2]})s'_qa'_k\bigr)
\ot_{B^e}
\phi\bigl((r_i(s_{q[1]})\ot l_j(s_{q[2]}))\ot_{B^e}m_q.
\end{align*}
Using (\ref{eq:1.2.6},\ref{eq:2.7.1}), we find
\begin{eqnarray*}
&&\hspace*{-2cm}
\sum_{i,q} (s_{q[0]}\ot s'_q)(l_i(s_{q[1]})\ot r_j(s_{q[2]}))\ot_{B^e}
(r_i(s_{q[1]})\ot l_j(s_{q[2]}))\\
&=&\sum_q 1_{A^{\sq e}}\ot_{B^e} (s_q\ot s'_q)\in B^e\ot_{B^e} A^{\sq e}.
\end{eqnarray*}
Since $\phi$ is left $B^e$-linear, we find
\begin{eqnarray*}
\diamond&=& \sum_{i,j,k,q} \bigl(a_k\ot a'_k)\ot_{B^e}\\
&&\hspace*{1cm}
\phi\bigl((s_{q[0]}l_i(s_{q[1]})r_i(s_{q[1]})\ot l_j(s_{q[2]})r_j(s_{q[2]})s'_q)\ot_{B^e} m_q\\
&=& \sum_{k,q} \bigl(a_k\ot a'_k)\ot_{B^e}
\phi\bigl(s_q\ot s'_q)\ot_{B^e} m_q\bigr)\\
&\equal{\equref{6.4.1}}& \sum_{k,q} \bigl(a_k\ot a'_k)\ot_{B^e} ((M\ot\varepsilon)
\circ\varphi\circ\varphi^{-1})((h_{(2)}\cdot x)m\ot h_{(1)})\\
&=& \sum_{k,q} \bigl(a_k\ot a'_k)\ot_{B^e} (h\cdot x)m
= \lambda_{h\cdot x}\bigl( \sum_k(a_k\ot a'_k)\ot_{B^e} m\bigr).
\end{eqnarray*}
This shows that $h\bullet \lambda_x=\lambda_{h\cdot x}$, for all $x\in Z(B)$.
\end{proof}

It follows from the discussion in \seref{5} that the functor
$\dul{\Pic}^{\sq_H}(B)\to \dul{\Pic}(B)$ restricting the $A^{\sq
e}$-module structure on the connecting bimodules to the
$B$-bimodule structure is strongly monoidal. This implies that we
have a group homomorphism \[g_2:\ {\Pic}^{\sq_H}(B)\to
{\Pic}(B).\]

\begin{proposition}\prlabel{6.5} The groups
$\Ker(g_2)$ and $H^1(H,Z(B))$ are isomorphic.
\end{proposition}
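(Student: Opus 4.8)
The plan is to identify $\Ker(g_2)$ with the isomorphism classes of left $A^{\sq e}$-module structures on the $B$-bimodule $B$, and then to feed this into the Militaru-\c Stefan machinery of \seref{4}. A class $[\ul{M}]\in\Pic^{\sq_H}(B)$ lies in $\Ker(g_2)$ exactly when its underlying $B$-bimodule $M$ represents the unit of $\Pic(B)$, that is, $M\cong B$ in ${}_B\Mm_B$. Since a strict $\sq_H$-Morita context is determined up to isomorphism by one of its connecting bimodules, I would transport the $\sq_H$-structure along such an isomorphism and assume $M=B$ with its standard $B^e$-module structure $\phi_B$. The only remaining datum is then a left $A^{\sq e}$-module structure on $B$ extending $\phi_B$, and two elements of $\Ker(g_2)$ agree precisely when the corresponding extensions are isomorphic as $A^{\sq e}$-modules over the identity of $B$.

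Next I would apply the lifting theorem. Taking $M=B$ as a left $B^e$-module and forming $E={}_{A^{\sq e}}\END(A^{\sq e}\ot_{B^e}B)^{\rm op}$ is legitimate by \prref{2.7}, which guarantees that $A^{\sq e}$ is a faithfully flat $H$-Galois extension of $B^e$. Then \coref{4.13}, applied with the pair $(A,B)$ replaced by $(A^{\sq e},B^e)$, gives a bijection between the isomorphism classes of such extensions and $\ol{\Omega}_E$. By \leref{6.1} we have $E^{{\rm co}H}\cong Z(B)$, which is commutative, and by \prref{6.4} the induced left $H$-action on $E^{{\rm co}H}$ is the Miyashita-Ulbrich action; this is the crucial point ensuring that the ambient cohomology is the intended one. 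The class of the unit context $(B,B,B,B,B,B)$ lies in $\Ker(g_2)$ and furnishes a distinguished algebra map, so $\Omega_E\neq\emptyset$ and \prref{4.3} applies, yielding $\ol{\Omega}_E\cong H^1(H,Z(B))$. Composing the two bijections gives $\Ker(g_2)\cong H^1(H,Z(B))$ as sets.

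It remains to check that this bijection is a group homomorphism, which is the technical heart of the argument. The product in $\Pic^{\sq_H}(B)$ sends a pair of connecting bimodules $M,M'$ to $M\ot_B M'$, and on the kernel $B\ot_B B\cong B$ carries the $A^{\sq e}$-action of \prref{5.11}. I would compute the algebra map $t_{B\ot_B B}$ attached to this product using the explicit formula \equref{5.9.2} specialized to $M=M'=B$ and $b=1$, and verify that, under the identification $\ol{\Omega}_E\cong H^1(H,Z(B))$, it coincides with the convolution product of the two corresponding cocycles. Because $Z(B)$ is commutative this convolution is exactly the group law on $Z^1(H,Z(B))$ and descends to $H^1(H,Z(B))$; the unit context corresponds to the trivial cocycle, and inverses are provided by the opposite context $(B,B,N,M,\delta,\gamma)$. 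The main obstacle is the bookkeeping in this last step: matching the Sweedler-type symbols $m(h)^{+},m(h)^{-},m(h)^{0}$ of \equref{5.9.2} against the convolution product and confirming that no stray factors survive. The essential inputs are the relations \equref{1.2.5} and \equref{1.2.6}, together with the cocommutativity of $H$.
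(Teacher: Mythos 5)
Your proposal is correct, and its skeleton is exactly that of the paper's proof: reduce a class in $\Ker(g_2)$ to a left $A^{\sq e}$-module structure on the $B$-bimodule $B$ via \thref{1.6}(2), apply the Militaru--\c Stefan correspondence \coref{4.13} (legitimate by \prref{2.7}), identify $E^{{\rm co}H}\cong Z(B)$ with its Miyashita--Ulbrich $H$-action via \leref{6.1} and \prref{6.4}, observe $\Omega_E\neq\emptyset$ using the standard action $\phi_0$, and invoke \prref{4.3} to obtain $\ol{\Omega}_E\cong H^1(H,Z(B))$, leaving only multiplicativity to check. The one place you diverge is that final check. The paper first derives the explicit $A^{\sq e}$-action $\phi_\alpha$ attached to a cocycle $\alpha$, namely \equref{6.5.1}, and then evaluates the product action on $B\ot_B B\cong B$ directly with \equref{5.11.1}, using \equref{1.2.3} and \equref{1.2.5} to land on $\cdot_{\alpha*\beta}$; you instead propose to compute at the level of the total integrals $t$ via \equref{5.9.2}. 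That route can be made to work (it is the one the paper itself uses later, in the proof of \thref{7.5}), but it carries a hidden obligation that the paper's choice avoids: \equref{5.9.2} describes the stability isomorphism $\psi_{M\ot_B M'}$ manufactured by the diagram \equref{5.9.1}, whereas the group law in $\Pic^{\sq_H}(B)$ is defined by the genuine $A^{\sq e}$-action of \prref{5.11} on $M\ot_B M'$; so you must additionally verify that, for modules carrying actual actions, $\alpha_{\bf 12}^{-1}$ of the diagram-defined $\psi_{B\ot_B B}$ is the algebra map attached to the \prref{5.11} product action --- a computation of roughly the same length as the paper's direct one, which is why the paper's route is the more economical here (and its by-product \equref{6.5.1} is reused in \seref{9}). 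One small slip in your reduction step: two extensions represent the same element of $\Ker(g_2)$ when they are isomorphic as left $A^{\sq e}$-modules via some invertible $f\in {}_B\End_B(B)$, i.e.\ multiplication by a central unit (cf.\ \prref{4.12}), not necessarily ``over the identity of $B$''; this is precisely the equivalence $t_1\sim t_2$ defining $\ol{\Omega}_E$, so nothing downstream is affected once the phrasing is corrected.
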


\begin{proof}
Take $[\ul{M}]=[(B,B,M,N,\alpha,\beta)]\in \Ker(g_2)$. Then $M$
and $N$ are isomorphic to $B$ as $B$-bimodules. $\ul{M}$ is
described completely once we know the left $A^{\sq e}$-module
structure on $M=B$, by \thref{1.6} (2). Isomorphism classes of
left $A^{\sq e}$-module structures on $B$ are in bijective
correspondence to the elements of $\ol{\Omega}_E$, cf.
\coref{4.13}. It follows from \prref{4.3} that $\ol{\Omega}_E\cong
H^1(H,Z(B))$, hence we have a bijection between $H^1(H,Z(B))$ and
$\Ker(g_2)$, and an injection \[g_1:\ H^1(H,Z(B))\to
{\Pic}^{\sq_H}(B).\] We will now describe this injection
explicitly, and show that it preserves multiplication.\\
Let $\phi_0$ be the left $A^{\sq e}$-action on $B$ corresponding to the trivial
element in ${\Pic}^{\sq_H}(B)$:
$$\phi_0\bigl(\sum_k(a_k\ot a'_k)\ot_{B^e} b\bigr)=\sum_k a_kba'_k.$$
Let $u_0=\tilde{\alpha}_{\bf 21}^{-1}$ be the corresponding element in
$\Cc_E({\bf 2},{\bf 1})$. Using the formulas in the proof of \thref{4.5} we obtain that
$$u_0(h)\bigl(\sum_k(a_k\ot a'_k)\ot_{B^e} b\bigr)=
\sum_{i,j,k} (a_kl_i(h_{(1)})\ot r_j(h_{(2)})a'_k)\ot_{B^e}
r_i(h_{(1)})bl_j(h_{(2)}).$$ Let $\alpha\in Z^1(H,Z(B))$, and take
$G(\alpha)=t=\alpha*t_0\in \Omega_E$ (see \prref{4.3}). Then
$t(h)=t_0(h_{(1)})\circ \alpha(h_{(2)})$, and
$u(h)=t(S(h))=u_0(h_{(1)})\circ \alpha(S(h_{(2)}))$.
We compute $\phi_\alpha=\tilde{\alpha}_{\bf 21}(u)$, using the formulas given in
the proof of \thref{4.5}:
\begin{eqnarray*}
&&\hspace*{-15mm}
1\ot_{B^e} \phi_\alpha\bigl(\sum_k (a_k\ot a'_k)\ot_{B^e} b\bigr)\\
&=& \sum_k u(a_{k[1]})\bigl((a_{k[0]}\ot a'_k)\ot_{B^e} b\bigr)\\
&=& \sum_k u_0(a_{k[1]})\bigl((a_{k[0]}\ot a'_k)\ot_{B^e} \alpha(S(a_{k[2]}))b\bigr)\\
&=& \sum_{i,j,k}\bigl(a_{k[0]}l_i(a_{k[1]})\ot r_j(a_{k[2]})a'_k\bigr)\ot_{B^e}
r_i(a_{k[1]})\alpha(S(a_{k[3]}))bl_j(a_{k[2]})\\
&=& \sum_{i,j,k}1_{A^{\sq e}}\ot_{B^e} a_{k[0]}l_i(a_{k[1]})r_i(a_{k[1]})\alpha(S(a_{k[3]}))bl_j(a_{k[2]})r_j(a_{k[2]})a'_k\\
&=& \sum_k 1_{A^{\sq e}}\ot_{B^e} a_{k[0]}\alpha(S(a_{k[1]}))ba'_k
\end{eqnarray*}
This means that $g_1(\alpha)$ is represented by $B$, with left $A^{\sq e}$-action given by
\begin{equation}\eqlabel{6.5.1}
\sum_k (a_k\ot a'_k)\cdot_\alpha b=
\phi_\alpha\bigl(\sum_k (a_k\ot a'_k)\ot_{B^e} b\bigr)=a_{k[0]}\alpha(S(a_{k[1]}))ba'_k.
\end{equation}
Let $\beta\in Z^1(H,Z(B))$ be another cocycle. Then
$g_1(\alpha)\ot_B g_1(\beta)=B\ot_BB\cong B$ as a
$(B,B)$-bimodule, with left $A^{\sq e}$-action
\begin{eqnarray*}
&&\hspace*{-15mm}
\sum_k (a_k\ot a'_k)\cdot b\cong \sum_k (a_k\ot a'_k)\cdot (1\ot_B b)\\
&\equal{\equref{5.11.1}}&
\sum_{i,k} (a_{k[0]}\ot l_i(a_{k[1]}))\cdot_\alpha 1\ot_B (r_i(a_{k[1]})\ot a'_k)\cdot_{\beta}b\\
&\equal{\equref{6.5.1}}&
\sum_{i,k} a_{k[0]}\alpha(S(a_{k[1]}))l_i(a_{k[2]})\ot_B r_i(a_{k[2]})_{[0]}
\beta(S(r_i(a_{k[2]})_{[1]}))ba'_k\\
&\equal{\equref{1.2.3}}&
\sum_{i,k} a_{k[0]}\alpha(S(a_{k[1]}))l_i(a_{k[2]})\ot_B r_i(a_{k[2]}) \beta(S(a_{k[3]}))ba'_k\\
&\cong & \sum_{i,k} a_{k[0]}\alpha(S(a_{k[1]}))l_i(a_{k[2]}) r_i(a_{k[2]}) \beta(S(a_{k[3]}))ba'_k\\
&\equal{\equref{1.2.5}}&
\sum_{k} a_{k[0]}\alpha(S(a_{k[1]})) \beta(S(a_{k[2]}))ba'_k\\
&=& \sum_{k} a_{k[0]}(\alpha*\beta)(S(a_{k[1]})) ba'_k\\
&=& \sum_k (a_k\ot a'_k)\cdot_{\alpha*\beta} b.
\end{eqnarray*}
This shows that $g_1(\alpha)\ot_B g_1(\beta)=g_1(\alpha*\beta)$,
that is,  $g_1$ is a group monomorphism.
\end{proof}

Let $\ul{M}\in \dul{\Pic}(B)$ be $H$-stable. Then there exists an
isomorphism \[\psi:\ M\ot H\to A^{\sq e}\ot_{B^e} M\] in
${}_B\Mm_B^H$ such that $\psi(m\ot 1)=1_{A^{\sq e}}\ot_{B^e}m$,
for all $m\in M$ (see the arguments given after \coref{4.10}). Then
$t:=\alpha_1^{-1}(\psi)\in \Hom^H(H,E)$ is convolution invertible
and satisfies the condition $t(1)=1$. In \prref{4.4}, we
constructed a cocycle $\sigma\in Z^2(H,Z(B))$. Now let
$g_3([\ul{M}])=[\sigma]\in H^2(H,Z(B))$. This defines a map
$$g_3:\ \Pic(B)^H\to  H^2(H,Z(B)).$$
It follows from \prref{4.4} that $g_3([\ul{M}])=1$ if and only if there exists an algebra
map $t'\in \Hom^H(H,E)$. By \prref{4.11}, this is equivalent to the existence of an
associative left $A^{\sq e}$-action $\phi:\ A^{\sq e}\ot_{B^e} M\to M$, which is
equivalent to $[\ul{M}]\in \im(g_2)$. We conclude that $\im(g_2)=\Ker(g_3)$.
Our observations can be summarized as follows.

\begin{theorem}\thlabel{6.6}
Let $H$ be a cocommutative Hopf algebra over a field $k$, and $A$ a faithfully flat
Hopf-Galois extension of $B=A^{{\rm co}H}$. Then we have an exact sequence
$$
1\to H^1(H,Z(B))\overset{g_1}\to \Pic^{\sq_H}(B)\cong\Pic^H(A)
 \overset{g_2}\to \Pic(B)^H \overset{g_3}\to H^2(H,Z(B)).$$
\end{theorem}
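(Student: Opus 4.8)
The sequence assembles the constructions and results obtained earlier in this section, so the task is to record exactness at each of the three nodes and to check that $g_2$ genuinely takes values in the $H$-stable part $\Pic(B)^H$. The central isomorphism $\Pic^{\sq_H}(B)\cong\Pic^H(A)$ is the monoidal equivalence established at the end of \seref{5}; it lets me carry out the whole argument on the $\sq_H$-side, in terms of $\dul{\Pic}^{\sq_H}(B)$, and transport the final statement back to $\Pic^H(A)$ only at the end. Recall also that $g_2$ is a group homomorphism because the restriction functor $\dul{\Pic}^{\sq_H}(B)\to\dul{\Pic}(B)$ is strongly monoidal.

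Exactness at $H^1(H,Z(B))$ and at $\Pic^{\sq_H}(B)$ is precisely \prref{6.5}: $g_1$ is a group monomorphism, which yields exactness at the first node, and $\im(g_1)=\Ker(g_2)$, which yields exactness at the second. The latter equality is the substance of \prref{6.5}: an element of $\Ker(g_2)$ has both connecting bimodules isomorphic to $B$, hence by \thref{1.6}(2) is determined by a left $A^{\sq e}$-module structure on $B$; such structures are classified up to isomorphism by $\ol{\Omega}_E$ (\coref{4.13}), and $\ol{\Omega}_E\cong H^1(H,Z(B))$ (\prref{4.3}), the composite bijection being realized by $g_1$. Before using this I must confirm that $g_2$ lands in $\Pic(B)^H$ rather than merely in $\Pic(B)$: given a $\sq_H$-Morita context $(B,B,M,N,\gamma,\delta)$, the $A^{\sq e}$-module structures on $M$ and $N$ are, by \coref{4.13} applied with $B^e$ and $A^{\sq e}$ in place of $B$ and $A$, liftings of the underlying $B^e$-structures, so $M$ and $N$ are $H$-stable; a routine diagram chase then shows that the $A^{\sq e}$-linearity of $\gamma$ and $\delta$ is equivalent to the commutativity of \equref{5.9.1a} and \equref{5.9.2a}, so that $g_2([\ul{M}])\in\Pic(B)^H$ in the sense of the definition preceding \thref{5.10}.

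Exactness at $\Pic(B)^H$, the identity $\im(g_2)=\Ker(g_3)$, I would argue exactly as in the discussion following \prref{6.5}: for an $H$-stable $\ul{M}$ one selects a normalized cleaving $\psi$ with $\psi(m\ot 1)=1_{A^{\sq e}}\ot_{B^e}m$, forms $t=\alpha_{\bf 12}^{-1}(\psi)\in\Hom^H(H,E)$ with $t(1)=1$, and extracts $\sigma\in Z^2(H,Z(B))$ as in \prref{4.4}; then $g_3([\ul{M}])=[\sigma]$ is trivial iff there is an algebra map $t'\in\Hom^H(H,E)$ (\prref{4.4}), iff there is an associative $A^{\sq e}$-action $A^{\sq e}\ot_{B^e}M\to M$ extending the given data (\prref{4.11}), iff $[\ul{M}]\in\im(g_2)$. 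The main obstacle is not any one of these implications but the bookkeeping that makes $g_3$ coherent: one must check that $[\sigma]\in H^2(H,Z(B))$ is independent of the chosen cleaving (two normalized cleavings differ by a unit of $E^{{\rm co}H}\cong Z(B)$, altering $\sigma$ by a $2$-coboundary) and, crucially, that the $H$-module structure on $Z(B)$ used to form all three cohomology groups is one and the same, namely the Miyashita-Ulbrich action \equref{1.2.8}. The latter is guaranteed by \prref{6.4}, which identifies the action transported from $E$ with the Miyashita-Ulbrich action independently of $\ul{M}$; without it the groups $H^1$ and $H^2$ appearing at the two ends of the sequence would a priori be taken with respect to different actions, and the statement would not even be well posed.
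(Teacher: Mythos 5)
Your proposal is correct and follows essentially the same route as the paper, whose proof of \thref{6.6} is precisely a summary of the ingredients you assemble: \prref{6.5} for exactness at $H^1(H,Z(B))$ and at $\Pic^{\sq_H}(B)\cong\Pic^H(A)$, the construction of $g_3$ via \prref{4.4} and \prref{4.11} (together with \thref{1.6}(2)) giving $\im(g_2)=\Ker(g_3)$, and \prref{6.4} guaranteeing that all cohomology groups are formed with respect to the one Miyashita--Ulbrich action. Your added bookkeeping --- that $g_2$ genuinely lands in $\Pic(B)^H$ and that $[\sigma]$ is independent of the normalized cleaving --- only makes explicit what the paper leaves implicit; just note that two normalized cleavings differ by a convolution-invertible map $H\to Z(B)$ (a unit of the convolution algebra $\Hom(H,Z(B))$), not by a single unit of $Z(B)$, though your conclusion that $\sigma$ changes by a $2$-coboundary is exactly right.
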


Observe that $\Pic^{\sq_H}(B)\cong\Pic^H(A)$ and $\Pic(B)^H$ are
non-abelian groups. The category of groups is not an abelian
category, so it makes no sense to talk about exact sequences of
groups. In the  statement in \thref{6.6}, exactness  means that
$g_1$ is an injective map, and that
$\im(g_i)=\{x~|~g_{i+1}(x)=1\}$, for $i=1,2$. The maps $g_1$ and
$g_2$ are group homomorphisms. An example given in \cite{BR2}
shows that $g_3$ is not a group homomorphism in general, even in
the case of group graded algebras. We will discuss in \seref{7}
the property satisfied by $g_3$.

\section{$g_3$ is a $1$-cocycle}\selabel{7}
We recall from \cite{F} that $\Pic(B)$ acts on $Z(B)$ as follows. For $[\ul{M}]\in \Pic(B)$,
we have a map $\xi_M:\ Z(B)\to Z(B)$ characterized by the property
\begin{equation}\eqlabel{7.1.0}
\xi_M(x)=y~~\Longleftrightarrow~~mx=ym,\quad{\rm for~all}~m\in M.
\end{equation}
It is easy to show that $\xi_M(xy)=\xi_M(x)\xi_M(y)$. We will show
that this action defines an action of $\Pic(B)^H$ on
$H^n(H,Z(B))$, so that we can consider the group of cocycles
$Z^1(\Pic(B)^H, H^2(H,Z(B)))$.
We will then show that $g_3$ is such a $1$-cocycle.\\
Our first aim is to show that the action $\Pic(B)^H$ on $Z(B)$ commutes with the
action of $H$ on $Z(B)$. First, we need some Lemmas.

\begin{lemma}\lelabel{7.1}
Take $[\ul{M}]\in \Pic(B)^H$. For all $x\in Z(B)$, $m\in M$ and $\sum_i a_i\ot a'_i\in A^{\sq e}$,
we have that
\begin{equation}\eqlabel{7.1.1}
\bigl(\sum_i a_i\ot a'_ix\bigr)\ot_{B^e} m= \bigl(\sum_i \xi_M(x)a_i\ot a'_i\bigr)\ot_{B^e} m
\end{equation}
in $A^{\sq e}\ot_{B^e} M$.
\end{lemma}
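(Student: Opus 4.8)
The plan is to deduce \equref{7.1.1} directly from the defining property of $\xi_M$ recorded in \equref{7.1.0}, using only the $B^e$-bimodule structure of $A^{\sq e}$ and the balancing relation of $\ot_{B^e}$; no cohomological input is needed. Recall that $A^{\sq e}$ is a $B^e$-bimodule via $(b\ot b')(\sum_i a_i\ot a'_i)(c\ot c')=\sum_i ba_ic\ot c'a'_ib'$, and that the $(B,B)$-bimodule $M$ is the left $B^e$-module determined by $(c\ot c')\cdot m=cmc'$. Since $x\in B=A^{{\rm co}H}$ is coinvariant, the one-sided elements $1\ot x$ and $\xi_M(x)\ot 1$ lie in $B^e$, and right multiplication by them preserves $A^{\sq e}$ (a $B^e$-subbimodule of $A^e$); this already shows that both members of \equref{7.1.1} are well-defined elements of $A^{\sq e}\ot_{B^e}M$, so no separate check is required.

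The key observation I would make next is that the defining relation $mx=\xi_M(x)m$ of \equref{7.1.0} is \emph{exactly} the statement that $1\ot x$ and $\xi_M(x)\ot 1$ act in the same way on $M$ through the left $B^e$-action, i.e. $(1\ot x)\cdot m=mx=\xi_M(x)m=(\xi_M(x)\ot 1)\cdot m$ for every $m\in M$. Writing $w=\sum_i a_i\ot a'_i$ and using the balancing identity $w\,s\ot_{B^e}m=w\ot_{B^e}s\cdot m$ for $s\in B^e$, I would then carry each scalar across the tensor product and back:
\begin{align*}
w(1\ot x)\ot_{B^e}m
&=w\ot_{B^e}(1\ot x)\cdot m
=w\ot_{B^e}(\xi_M(x)\ot 1)\cdot m\\
&=w(\xi_M(x)\ot 1)\ot_{B^e}m.
\end{align*}
Evaluating the right $B^e$-action on $A^{\sq e}$ at both ends then reproduces the two members of \equref{7.1.1}, since $w(1\ot x)=\sum_i a_i\ot a'_ix$ while $w(\xi_M(x)\ot 1)$ attaches $\xi_M(x)$ to the first tensor factor.

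The one place demanding care is the bookkeeping of sides in the two tensor factors. On $M$ the scalar $x$ enters on the right (as $mx$) whereas $\xi_M(x)$ enters on the left (as $\xi_M(x)m$); it is precisely this right-to-left conversion, which is the content of the definition of $\xi_M$, that relocates the scalar from the second tensor factor of $A^{\sq e}$ to the first. Because the second factor carries the opposite multiplication of $A^{\rm op}$, I would verify explicitly that the right $B^e$-action produces $a'_ix$ there, and correspondingly that it attaches $\xi_M(x)$ to $a_i$ on the side displayed in \equref{7.1.1}; matching these conventions is the only subtlety, the remainder being a formal consequence of \equref{7.1.0} and the balancing of $\ot_{B^e}$.
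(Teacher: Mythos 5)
Your argument contains a genuine sides error that is fatal to the approach. The balancing relation of $\ot_{B^e}$ moves across the tensor product only the \emph{right} multiplication by $B^e\subset A^{\sq e}$, and with the paper's convention $(b\ot b')(\sum_i a_i\ot a'_i)(c\ot c')=\sum_i ba_ic\ot c'a'_ib'$ (the right action being right multiplication inside $A\ot A^{\rm op}$), one has $w(1\ot x)=\sum_i a_i\ot xa'_i$ and $w(\xi_M(x)\ot 1)=\sum_i a_i\xi_M(x)\ot a'_i$ --- not $\sum_i a_i\ot a'_ix$ and $\sum_i \xi_M(x)a_i\ot a'_i$ as you claim. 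So your chain of equalities correctly proves
\begin{equation*}
\Bigl(\sum_i a_i\ot xa'_i\Bigr)\ot_{B^e}m=\Bigl(\sum_i a_i\xi_M(x)\ot a'_i\Bigr)\ot_{B^e}m,
\end{equation*}
an identity at the \emph{inner} positions, valid for every $[\ul{M}]\in\Pic(B)$; but this is not \equref{7.1.1}. There $x$ and $\xi_M(x)$ sit at the \emph{outer} positions, i.e.\ \equref{7.1.1} reads $\bigl((1\ot x)w\bigr)\ot_{B^e}m=\bigl((\xi_M(x)\ot 1)w\bigr)\ot_{B^e}m$ and concerns the \emph{left} $B^e$-action (the restriction to $B^e$ of the left $A^{\sq e}$-module structure), which is not balanced over $\ot_{B^e}$. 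You cannot repair this by commuting $x$ past $a'_i$: $x\in Z(B)$ is central in $B$ but not in $A$; by \equref{1.2.9} one has $a'_ix=(a'_{i[1]}\cdot x)a'_{i[0]}$, so the discrepancy between your identity and \equref{7.1.1} is measured by the Miyashita--Ulbrich action, and controlling that discrepancy is precisely the content of \prref{7.3}, which the paper deduces \emph{from} \leref{7.1}, not conversely.

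The missing idea shows in the fact that you never use the hypothesis $[\ul{M}]\in\Pic(B)^H$ (you even assert ``no cohomological input is needed''); if a purely formal balancing argument sufficed, the lemma, and with it \prref{7.3}, would hold for arbitrary invertible bimodules, making the $H$-stability hypothesis vacuous. The paper's proof is one line but uses $H$-stability essentially: it provides a $(B,B)$-bimodule isomorphism $\psi_M:\ M\ot H\to A^{\sq e}\ot_{B^e}M$, where $M\ot H$ carries the bimodule structure $b(m\ot h)b'=bmb'\ot h$. In $M\ot H$, the defining relation \equref{7.1.0} immediately gives $(m\ot h)\cdot x=mx\ot h=\xi_M(x)m\ot h=\xi_M(x)\cdot(m\ot h)$, so $z\cdot x=\xi_M(x)\cdot z$ holds for all $z\in M\ot H$ and, transporting along the surjective bimodule map $\psi_M$, for all $z\in A^{\sq e}\ot_{B^e}M$; applied to $z=(\sum_i a_i\ot a'_i)\ot_{B^e}m$ this is exactly \equref{7.1.1}. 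Your proposal, as written, proves a different (weaker, universally valid) identity and leaves the actual statement unproved.
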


\begin{proof}
This follows immediately from the fact that $mx\ot h=\xi_M(x)m\ot
h$ in $M\ot H$, for all $m\in M$, $x\in Z(B)$ and $h\in H$, and
the fact that we a $(B,B)$-bimodule isomorphism $\psi_M:\ M\ot
H\to A^{\sq e}\ot_{B^e} M$.
\end{proof}

\begin{lemma}\lelabel{7.2}
The map \[l:\ A^{\sq e}\ot_{B^e} B\to A\ot_B^HA^{\rm op},\quad
(\sum_i a_i\ot a'_i)\ot_{B^e}b\mapsto \sum_i a_ib\ot_B a'_i\] is
an isomorphism.
\end{lemma}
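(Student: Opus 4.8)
The plan is to realize $l$ as the restriction of an elementary base-change isomorphism, and then to reduce its bijectivity to the exactness of the sequence that defines $A^{\sq e}$ as a space of coinvariants.

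First I would record the purely module-theoretic isomorphism
$$\Phi:\ A^e\ot_{B^e}B\to A\ot_B A^{\rm op},\quad (a\ot a')\ot_{B^e}b\mapsto ab\ot_B a',$$
whose inverse is $a\ot_B a'\mapsto (a\ot a')\ot_{B^e}1$; both maps are readily checked to be well defined and mutually inverse, using that the left $B$-action on $A^{\rm op}$ (as an object of ${}_B\Mm_B^H$) is ordinary multiplication in $A$. The point is that $l$ is exactly the restriction of $\Phi$ to the subspace $A^{\sq e}\ot_{B^e}B$, since $l((\sum_i a_i\ot a'_i)\ot_{B^e}b)=\sum_i a_ib\ot_B a'_i=\Phi((\sum_i a_i\ot a'_i)\ot_{B^e}b)$.

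Next I would describe both sides as kernels. From the computation in the proof of \leref{2.2}, $A^{\sq e}=A\sq_H A^{\rm op}=\Ker(f_1-f_2)$, where $f_1-f_2:\ A^e\to A^e\ot H$ is the right $B^e$-linear map with $f_1(a\ot a')=a_{[0]}\ot a'\ot a_{[1]}$ and $f_2(a\ot a')=a\ot a'_{[0]}\ot S(a'_{[1]})$; while, by definition of the generalized cotensor product, $A\ot_B^H A^{\rm op}=\Ker(d)$ with $d(\sum_j c_j\ot_B c'_j)=\sum_j c_{j[0]}\ot_B c'_j\ot c_{j[1]}-\sum_j c_j\ot_B c'_{j[0]}\ot S(c'_{j[1]})$. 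A short computation, using only that $b\in B$ is a coinvariant, shows that $\Phi$ intertwines $d$ with $\delta:=(f_1-f_2)\ot_{B^e}{\rm id}_B$, under the evident identification $(A^e\ot H)\ot_{B^e}B\cong(A\ot_B A^{\rm op})\ot H$. Hence $\Phi$ restricts to an isomorphism $\Ker(\delta)\cong\Ker(d)=A\ot_B^H A^{\rm op}$, and since $\delta\circ(\iota\ot_{B^e}{\rm id}_B)=0$ for the inclusion $\iota:\ A^{\sq e}\hookrightarrow A^e$, the map $l$ factors as $A^{\sq e}\ot_{B^e}B\to\Ker(\delta)\xrightarrow{\ \Phi\ }A\ot_B^H A^{\rm op}$. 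Consequently $l$ is an isomorphism if and only if the sequence
$$0\to A^{\sq e}\ot_{B^e}B\to A^e\ot_{B^e}B\xrightarrow{\ \delta\ }(A^e\ot H)\ot_{B^e}B$$
is exact, that is, if and only if the functor $-\ot_{B^e}B$ preserves the exactness of $0\to A^{\sq e}\to A^e\xrightarrow{f_1-f_2}A^e\ot H$.

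The main obstacle is precisely this last point: $B$ is in general \emph{not} flat as a left $B^e$-module, so exactness is not automatic and one cannot argue as in \leref{5.3}, where the tensored module was $B$-flat. I would resolve it by showing that the inclusion $\iota:\ A^{\sq e}=(A^e)^{{\rm co}\ol{H\ot H}}\hookrightarrow A^e$ is pure as a map of right $B^e$-modules, so that $-\ot_{B^e}B$ keeps the displayed sequence (left) exact. This purity is a consequence of the faithfully flat Hopf–Galois structure of the tower $B^e\subseteq A^{\sq e}\subseteq A^e$: by \prref{2.7}, $A^{\sq e}$ is a faithfully flat $H$-Galois extension of $B^e$ and $A^e$ is a faithfully flat $H\ot H$-Galois extension of $B^e$, with $A^e$ faithfully flat over $A^{\sq e}$; faithfully flat descent (reducing after base change to the split, cleft situation, where a $B^e$-linear splitting is explicit) then forces $\iota$ to be $B^e$-pure. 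With this in hand the top row is exact and the diagram chase above concludes. As an alternative to the purity argument, one can instead write down a two-sided inverse of $l$ directly from the Galois element $\gamma_{A^{\sq e}}$ of \prref{2.7} together with \equref{1.2.5} and \equref{1.2.6}; checking that this candidate lands in $A^{\sq e}\ot_{B^e}B$ and inverts $l$ is then a lengthy but routine application of the identities \equref{1.2.1}--\equref{1.2.7}.
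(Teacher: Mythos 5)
You have set the reduction up correctly: $\Phi$ is indeed an isomorphism $A^e\ot_{B^e}B\cong A\ot_B A^{\rm op}$, it intertwines $d$ with $\delta=(f_1-f_2)\ot_{B^e}{\rm id}_B$, and $l$ is an isomorphism precisely when $-\ot_{B^e}B$ preserves the left exactness of $0\to A^{\sq e}\to A^e\to A^e\ot H$. But your treatment of this last point---which is the entire content of the lemma---has a genuine gap, in fact two. First, purity of the inclusion $\iota:A^{\sq e}\hookrightarrow A^e$ as right $B^e$-modules would only give injectivity of $A^{\sq e}\ot_{B^e}B\to A^e\ot_{B^e}B$, i.e.\ injectivity of $l$; surjectivity of $l$ onto $\Ker(\delta)$ requires exactness at the middle term, which amounts to the extra statement that $Q\ot_{B^e}B\to (A^e\ot H)\ot_{B^e}B$ is injective for $Q=\im(f_1-f_2)$ (equivalently, pure exactness of the whole left-exact sequence). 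That is strictly stronger than purity of $\iota$ and you never address it. Second, the proposed proof of purity, ``faithfully flat descent, reducing after base change to the split, cleft situation,'' is not an argument: cleftness is a normal-basis property that faithfully flat Hopf--Galois extensions do not acquire under base change (base change along the extension itself only trivializes the torsor, i.e.\ makes $\can$ split); moreover, even granting a $B^e$-linear splitting after base change, splittings do not descend, and exactness of the complex $C\ot_{B^e}B$ cannot be tested by tensoring $C$ on the right with a faithfully flat algebra, because over the noncommutative ring $B^e$ both tensorings occur on the same side of $C$.

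If you want to complete your route, the workable version is this: by Schneider's quotient theory (the setting of \prref{1.3}, using \leref{2.1}), $A^e$ is a faithfully flat $\ol{H\ot H}$-Galois extension of $A^{\sq e}$, so your sequence is isomorphic to the Amitsur sequence $0\to A^{\sq e}\to A^e\to A^e\ot_{A^{\sq e}}A^e$; writing $A^e\ot_{B^e}B\cong A^e\ot_{A^{\sq e}}N$ with $N=A^{\sq e}\ot_{B^e}B$, the tensored sequence becomes the descent complex $0\to N\to A^e\ot_{A^{\sq e}}N\to A^e\ot_{A^{\sq e}}A^e\ot_{A^{\sq e}}N$ of the left $A^{\sq e}$-module $N$, which is exact by faithfully flat descent. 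The paper itself takes a much shorter path that you should compare with: $A^{\sq e}\ot_{B^e}B$ and $A\ot_B^HA^{\rm op}$ are objects of ${}_{A^{\sq e}}\Mm^H$ and $l$ is a morphism between them, so by \thref{1.1} combined with \prref{2.7} (the coinvariants functor is an inverse equivalence for the faithfully flat $H$-Galois extension $A^{\sq e}/B^e$) it suffices to identify both modules of $H$-coinvariants with $B$, and $(A\ot_B^HA^{\rm op})^{{\rm co}H}\cong B$ follows from a short computation using faithful flatness of $A$ over $B$. Finally, your closing alternative (an explicit inverse built from $\gamma_{A^{\sq e}}$ and \equref{1.2.5}--\equref{1.2.6}) is plausible but only asserted; verifying that the candidate is $B$-balanced and actually lands in $A^{\sq e}\ot_{B^e}B$ is where the difficulty of the lemma is concentrated, so as written it does not close the gap either.
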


\begin{proof}
Observe first that $A^{\sq e}\ot_{B^e} B$ and $A\ot_B^HA^{\rm op}$ are objects of the
category ${}_{A^{\sq e}}\Mm^H$. It follows from \thref{1.1} and \prref{2.7} that it suffices
to show that
$$(A\ot_B^HA^{\rm op})^{{\rm co}H}\cong B\cong (A^{\sq e}\ot_{B^e} B)^{{\rm co}H}.$$
Take
$$\sum_i a_i\ot_B a'_i\in (A\ot_B^HA^{\rm op})^{{\rm co}H}\subset
A\ot_B^HA^{\rm op}.$$
Then
$$\sum_i a_{i[0]}\ot_B a'_i\ot a_{i[1]}=\sum_i (a_ib\ot_B a'_i) \ot 1.$$
From the fact that $A\in {}_B\Mm$ is faithfully flat, we deduce
that $\sum_i a_i\ot_B a'_i\in A^{{\rm co}H}\ot_B A=B\ot_B A$,
hence
$$\sum_i a_i\ot_B a'_i=1\ot_B\sum_i a_i a'_i=1\ot_B a.$$
Since $\sum_i a_i\ot_B a'_i\in A\ot_B^HA^{\rm op}$, we also have that
$$1\ot_B a_{[0]}\ot S(a_{[1]})=1\ot_B a\ot 1.$$
Apply $\rho_A$ to the second tensor factor ($\rho_A$ is left $B$-linear), and then
multiply the second and third tensor factor. This gives
$1\ot_B a_{[0]}\ot a_{[1]}=1\ot_B a\ot_B 1$, and it follows that $a\in B$. This shows that
the map
$$f:\ B\to (A\ot_B^HA^{\rm op})^{{\rm co}H},\quad f(b)=1\ot_B b$$
is an isomorphism.
\end{proof}

\leref{7.2} tells us that the map $A^{\sq e}\to A\ot_B^HA^{\rm op}$ induced by the
canonical surjection $A^e\to A\ot_BA^{\rm op}$ is surjective.

\begin{proposition}\prlabel{7.3}
Let $\ul{M}=(B,B,M,N,\alpha,\beta)$ represent an $H$-stable element of $\Pic(B)$.
Then
$$\xi_M(h\cdot x)=h\cdot (\xi_M(x)),$$
for all $h\in H$ and $x\in Z(B)$.
\end{proposition}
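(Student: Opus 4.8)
The plan is to prove the single statement by showing that the function $\Psi\colon H\to Z(B)$ defined by $\Psi(h)=\xi_M(h\cdot x)-h\cdot\xi_M(x)$ is identically zero. The tools will be the commutation rule \equref{1.2.9} applied to the Hopf--Galois extension $A^{\sq e}/B^e$ (legitimate by \prref{2.7}), together with \leref{7.1}, which identifies the central elements $1\ot x$ and $\xi_M(x)\ot 1$ as operators on $A^{\sq e}\ot_{B^e}M$.

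First I would record how the Miyashita--Ulbrich action of $H$ on $Z(B^e)=Z(B)\ot Z(B)$ behaves. Feeding the explicit translation element $\gamma_{A^{\sq e}}(h)$ of \prref{2.7} into the formula $w\bullet h=\sum l(h)\,w\,r(h)$ and using cocommutativity, a short computation in $A\ot A^{\rm op}$ shows that the action is the tensor-product action; in particular $h\cdot(1\ot x)=1\ot(h\cdot x)$ and $h\cdot(\xi_M(x)\ot 1)=(h\cdot\xi_M(x))\ot 1$, where on the right $\cdot$ is the action of $A$ on $Z(B)$ occurring in the statement. This is the step linking the $A^{\sq e}$-action back to the action in the theorem, and I expect the careful treatment of the opposite multiplication here to be the first delicate point.

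Next, for arbitrary $c\in A^{\sq e}$ and $m\in M$ I would apply the commutation rule $wc=c_{[0]}\bigl(S(c_{[1]})\cdot w\bigr)$ to the two central elements $w=1\ot x$ and $w=\xi_M(x)\ot 1$. Using the previous step to rewrite $S(c_{[1]})\cdot w$ and then transporting the resulting central factors across $\ot_{B^e}$ onto $M$, one obtains $((1\ot x)c)\ot_{B^e}m=\sum c_{[0]}\ot_{B^e}m\,(S(c_{[1]})\cdot x)$ and $((\xi_M(x)\ot 1)c)\ot_{B^e}m=\sum c_{[0]}\ot_{B^e}(S(c_{[1]})\cdot\xi_M(x))\,m$. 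By \leref{7.1} the two left-hand sides coincide, and since $m\,y=\xi_M(y)\,m$ for every $y\in Z(B)$, comparing the right-hand sides gives
$$\sum c_{[0]}\ot_{B^e}\Psi(S(c_{[1]}))\,m=0$$
in $A^{\sq e}\ot_{B^e}M$, for all $c\in A^{\sq e}$ and $m\in M$.

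Finally I would strip off the coaction. Taking $c=r_i(h)$ (the right components of $\gamma_{A^{\sq e}}(h)$), multiplying on the left by the matching $l_i(h)$ and summing over $i$, the defining property \equref{1.2.1} of the translation map collapses $\sum_i l_i(h)r_i(h)_{[0]}\ot r_i(h)_{[1]}$ to $1_{A^{\sq e}}\ot h$, leaving $1_{A^{\sq e}}\ot_{B^e}\Psi(S(h))\,m=0$. Since $m\mapsto 1_{A^{\sq e}}\ot_{B^e}m$ is injective (the extension $A^{\sq e}/B^e$ being Galois) and $M$ is a faithful $B$-module, this forces $\Psi(S(h))=0$; as $S$ is bijective, $\Psi\equiv 0$, which is precisely $\xi_M(h\cdot x)=h\cdot\xi_M(x)$. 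The main obstacle is the bookkeeping in the first two steps — pinning down the tensor form of the action on $Z(B^e)$ and moving the central factors onto the correct side of the opposite algebra and across the balanced tensor product — after which the concluding faithfulness argument is routine.
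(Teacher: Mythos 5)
Your proof is correct, and while it opens exactly as the paper does, its second half takes a genuinely different route. Both arguments start from \leref{7.1}, which identifies $1\ot x$ and $\xi_M(x)\ot 1$ as the same operator on $A^{\sq e}\ot_{B^e}M$, and both are driven by the commutation rule \equref{1.2.9}. You apply \equref{1.2.9} once, at the level of the extension $A^{\sq e}/B^e$ (legitimate by \prref{2.7}), after first checking that the Miyashita--Ulbrich action of $H$ on $Z(B^e)$ is the tensor product of two copies of the $A$-action; that computation does come out right from \equref{2.7.1}, since the opposite multiplication in the second tensorand reverses twice to give $(y\ot y')\bullet h=(y\bullet h_{(1)})\ot (y'\bullet h_{(2)})$, whence $h\cdot(1\ot x)=1\ot(h\cdot x)$ and $h\cdot(\xi_M(x)\ot 1)=(h\cdot\xi_M(x))\ot 1$ using cocommutativity. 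The paper instead applies \equref{1.2.9} componentwise in $A$ and $A^{\rm op}$ and trades $a'_{k[1]}$ for $S(a_{k[2]})$ by hand via the cotensor relation. The more substantial divergence is in stripping off $M$: the paper tensors with the inverse bimodule $N$ (using \leref{5.5} and the map $g^{-1}$ of \leref{5.7}), uses $M\ot_BN\cong B$, and invokes \leref{7.2} to land in $A\ot_B^HA^{\rm op}$, where it evaluates at $\gamma_A(h)$ and multiplies out; you avoid $N$ and \leref{7.2} entirely, staying in $A^{\sq e}\ot_{B^e}M$, evaluating at $\gamma_{A^{\sq e}}(h)$, collapsing with \equref{1.2.1}, and concluding from purity of $B^e\to A^{\sq e}$ together with faithfulness of $M$ as a left $B$-module (automatic, since the connecting module of a strict Morita context is a progenerator). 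Your route is shorter and bypasses the Section \ref{se:5} monoidal machinery; the paper's route buys an intermediate identity valid for all elements of $A\ot_B^HA^{\rm op}$, phrased purely in terms of $A$ and $\xi_M$, which is of some independent interest. Two points to make explicit in a final write-up: (i) the ``take $c=r_i(h)$, multiply by $l_i(h)$ and sum'' step should be phrased via the map $c\mapsto c_{[0]}\ot_{B^e}\Psi(S(c_{[1]}))m$, which is left $B^e$-linear and identically zero, so its balanced extension to $A^{\sq e}\ot_{B^e}A^{\sq e}$ annihilates $\gamma_{A^{\sq e}}(h)$ --- the individual components $l_i(h)$, $r_i(h)$ are not well-defined elements; (ii) the injectivity of $m\mapsto 1_{A^{\sq e}}\ot_{B^e}m$ rests on faithful flatness of $A^{\sq e}$ as a right $B^e$-module (\prref{2.7} combined with \thref{1.1}), not merely on the Galois property.
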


\begin{proof}
For $\sum_k a_k\ot a'_k\in A^{\sq e}$, $x\in Z(B)$ and $m\in M$, we compute that
\begin{eqnarray*}
&&\hspace*{-25mm}
(\sum_k \xi_M(x)a_k\ot a'_k)\ot_{B^e} m
~\equal{\equref{7.1.1}}~
(\sum_k a_k\ot a'_kx)\ot_{B^e} m\\
&\equal{\equref{1.2.9}}&
(\sum_k a_k\ot (a'_{k[1]}\cdot x)a'_{k[0]})\ot_{B^e} m\\
&=&(\sum_k a_k\ot a'_{k[0]})\ot_{B^e} m(a'_{k[1]}\cdot x)\\
&\equal{\equref{7.1.0}}&
(\sum_k a_k\ot a'_{k[0]})\ot_{B^e} \xi_M(a'_{k[1]}\cdot x)m\\
&=&(\sum_k a_k\xi_M(a'_{k[1]}\cdot x)\ot a'_{k[0]})\ot_{B^e} m\\
&\equal{\equref{1.2.9}}&
(\sum_k a_{k[1]}\cdot\xi_M(a'_{k[1]}\cdot x) a_{k[0]}\ot a'_{k[0]})\ot_{B^e} m\\
&=&(\sum_k a_{k[1]}\cdot\xi_M(S(a_{k[2]})\cdot x) a_{k[0]}\ot a'_{k})\ot_{B^e} m.
\end{eqnarray*}
Now take an arbitrary $n\in N$. Applying \leref{5.5}, we find
\begin{eqnarray*}
&&\hspace*{-2cm}
\sum_{i,k} \Bigl(\bigl(\xi_M(x)a_{k{[0]}}\ot l_i(a_{k{[1]}})\bigl)\ot_{B^e} m\Bigr)
\ot_B\Bigl(\bigl(r_i(a_{k{[1]}})\ot a'_k\bigl)\ot_{B^e} n\Bigr)\\
&=&
\sum_{i,k} \Bigl(\bigl((a_{k{[1]}}\cdot (\xi_M(S(a_{k{[2]}})\cdot x))
a_{k{[0]}}\ot l_i(a_{k{[3]}})\bigl)\ot_{B^e} m\Bigr)\\
&&\hspace*{2cm}
\ot_B\Bigl(\bigl(r_i(a_{k{[3]}})\ot a'_k\bigl)\ot_{B^e} n\Bigr).
\end{eqnarray*}
Now we apply
$$g^{-1}:\ (A^{\sq e}\ot_{B^e} M)\ot_B^H (A^{\sq e}\ot_{B^e} N)\to
A^{\sq e}\ot_{B^e} (M\ot_B N)$$
to both sides (see \equref{5.7.1}). Using \equref{1.2.5}, we obtain
\begin{eqnarray*}
&&\hspace*{-2cm}
\sum_k (\xi_M(x)a_k\ot a'_k)\ot_{B^e} (m\ot_B n)\\
&=&
(\sum_k a_{k[1]}\cdot\xi_M(S(a_{k[2]})\cdot x) a_{k[0]}\ot a'_{k})\ot_{B^e} (m\ot_B n).
\end{eqnarray*}
Now $M\ot_B N\cong B$. It follows that
$$\sum_k (\xi_M(x)a_k\ot a'_k)\ot_{B^e} b=
\left(\sum_k a_{k[1]}\cdot\xi_M(S(a_{k[2]})\cdot x) a_{k[0]}\ot
a'_{k}\right)\ot_{B^e} b,$$ for all $\sum_k a_k\ot a'_k\in A^{\sq
e}$, $x\in Z(B)$ and $b\in B$. Using \leref{7.2}, we find that
\begin{align*}
 \sum_k \xi_M(x)a_k\ot_B a'_k &=
\sum_k a_{k[1]}\cdot\xi_M(S(a_{k[2]})\cdot x) a_{k[0]}\ot_B a'_{k} \\
&= \sum_kS(a'_{k[1]})\cdot\xi_M(a'_{k[2]}\cdot x) a_{k}\ot_B
a'_{k[0]}
\end{align*}
for all $\sum_k a_k\ot_B a'_k\in A\ot_B^H A^{\rm op}$ and $x\in Z(B)$.\\
Now take $h\in H$. It follows from (\ref{eq:1.2.3}-\ref{eq:1.2.4}) that
$\gamma_A(h)=\sum_i l_i(h)\ot_B r_i(h)\in A\ot_B^HA^{\rm op}$. Therefore
\begin{eqnarray*}
&&\hspace*{-2cm}
\sum_i \xi_M(x)l_i(h)\ot_B r_i(h)\\
&=&
\sum_i (S(r_i(h)_{[1]})\cdot \xi_M(r_i(h)_{[1]}\cdot x))l_i(h)\ot_B r_i(h)_{[0]}\\
&\equal{\equref{1.2.3}}&
\sum_i S(h_{(2)})\cdot \xi_M(h_{(3)}\cdot x)\ot l_i(h_{(1)})\ot_B r_i(h_{(1)}).
\end{eqnarray*}
We apply $(A\ot\varepsilon)\circ \gamma_A$ to both sides; this gives
$$\xi_M(x)\varepsilon(h)=S(h_{(1)})\cdot \xi_M(h_{(2)}\cdot x),$$
and, finally,
$$h\cdot \xi_M(x)=h_{(1)}\cdot \xi_M(x)\varepsilon(h_{(2)})
=(h_{(1)}S(h_{(2)}))\cdot \xi_M(h_{(3)}\cdot x)=\xi_M(h\cdot x),
$$ which gives the desired formula.
\end{proof}

\begin{proposition}\prlabel{7.4}
The action of $\Pic(B)$ on $Z(B)$ induces an action of $\Pic(B)^H$ on
$Z^n(H,Z(B))$, $B^n(H,Z(B))$ and $H^n(H,Z(B))$. More precisely, if
$f:\ H^{\ot n}\to Z(B)$ is a cocycle (resp. a coboundary), then
$\xi_M\circ f$ is also a cocycle (resp. a coboundary).
\end{proposition}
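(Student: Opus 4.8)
The plan is to show that, for an $H$-stable $\ul{M}$, post-composition with $\xi_M$ is an automorphism of the Sweedler cochain complex computing $H^n(H,Z(B))$; the stated preservation of cocycles and coboundaries then falls out, and the group-action property follows by functoriality.

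First I would record that $\xi_M$ is an automorphism of $Z(B)$ \emph{as a left $H$-module algebra}. It is multiplicative and unital by the remark preceding \leref{7.1}; it is bijective because the inverse context $\ul{M}^{-1}=(B,B,N,M,\beta,\alpha)$ gives $\xi_N=\xi_M^{-1}$ via \equref{7.1.0}; and, crucially, it commutes with the Miyashita--Ulbrich action by \prref{7.3}, that is, $\xi_M(h\cdot x)=h\cdot\xi_M(x)$ for all $h\in H$ and $x\in Z(B)$. Since $\xi_M$ is determined by the characterization \equref{7.1.0}, it depends only on the class $[\ul{M}]$.

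Next, recall that the group of $n$-cochains in Sweedler's complex is the set of convolution-invertible maps in $\Hom(H^{\ot n},Z(B))$, an abelian group under convolution (abelian since $Z(B)$ is commutative), and that its coboundary $\partial$ is the alternating convolution of the cosimplicial faces $\partial_0,\dots,\partial_{n+1}$, where $\partial_0(f)(h_1\ot\cdots\ot h_{n+1})=h_1\cdot f(h_2\ot\cdots\ot h_{n+1})$ uses the $H$-action, the middle faces reindex by the multiplication of $H$, and the last face inserts $\varepsilon$. Writing $\Xi:=\xi_M\circ(-)$ on cochains, I would verify two things. First, $\Xi$ is a homomorphism for convolution and fixes the convolution unit: indeed $\xi_M\circ(f*g)=(\xi_M\circ f)*(\xi_M\circ g)$ because $\xi_M$ is multiplicative, and $\xi_M(1)=1$ fixes the unit cochain, so $\Xi$ preserves convolution-inverses. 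Second, $\Xi$ commutes with each face $\partial_i$: for the middle faces and the counit face this is immediate, because $\partial_i$ only acts on the $H$-arguments while $\xi_M$ is $k$-linear, and for $\partial_0$ the identity $\Xi\circ\partial_0=\partial_0\circ\Xi$ is precisely the $H$-linearity of $\xi_M$ supplied by \prref{7.3}.

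Combining these, $\Xi$ commutes with the coboundary $\partial$, hence maps $Z^n(H,Z(B))$ into itself and $B^n(H,Z(B))$ into itself; this is exactly the claim that $\xi_M\circ f$ is a cocycle (resp.\ coboundary) whenever $f$ is one, and so $\Xi$ descends to an automorphism of $H^n(H,Z(B))$. Finally, since $[\ul{M}]\mapsto\xi_M$ is the group homomorphism $\Pic(B)\to\Aut(Z(B))$ of \cite{F}, which by \prref{7.3} restricts on $\Pic(B)^H$ to a homomorphism into the $H$-module-algebra automorphisms, and since $\xi_{M_1\ot_B M_2}=\xi_{M_1}\circ\xi_{M_2}$ yields $\Xi_{M_1\ot_B M_2}=\Xi_{M_1}\circ\Xi_{M_2}$, the assignment $[\ul{M}]\mapsto\Xi$ is a genuine action of $\Pic(B)^H$ on $Z^n$, $B^n$ and $H^n(H,Z(B))$. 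The only nonformal ingredient is the commutation of $\Xi$ with $\partial_0$, i.e.\ \prref{7.3}; everything else is bookkeeping about the cosimplicial structure of Sweedler cohomology.
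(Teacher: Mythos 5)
Your proposal is correct and is exactly the paper's argument: the paper's proof is a one-liner stating that the result ``follows immediately from \prref{7.3} and the definition of Sweedler cohomology,'' and your write-up simply fills in that bookkeeping (post-composition $\Xi=\xi_M\circ(-)$ commutes with the faces, with \prref{7.3} handling $\partial_0$, and multiplicativity of $\xi_M$ handling convolution). The added details --- bijectivity of $\xi_M$ via $\ul{M}^{-1}$ and the action property via $\xi_{M_1\ot_B M_2}=\xi_{M_1}\circ\xi_{M_2}$ --- are accurate and consistent with the paper's conventions.
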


\begin{proof}
This follows immediately from \prref{7.3} and the definition of Sweedler
cohomology, see \cite{Sweedler} or \cite[Sec. 9.1]{Caenepeel}.
\end{proof}

Since $\Pic(B)^H$ acts on $H^2(H,Z(B))$, we can consider the
cohomology group $H^1(\Pic(B)^H,H^2(H,Z(B)))$.

\begin{theorem}\thlabel{7.5}
$g_3\in Z^1(\Pic(B)^H,H^2(H,Z(B)))$.
\end{theorem}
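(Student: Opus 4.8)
The plan is to verify the $1$-cocycle identity directly at the level of representing $2$-cocycles. For the left action of $\Pic(B)^H$ on $H^2(H,Z(B))$ furnished by \prref{7.4}, in which $[\ul{M}]$ acts by $[\sigma]\mapsto[\xi_M\circ\sigma]$, a map $c:\Pic(B)^H\to H^2(H,Z(B))$ is a $1$-cocycle precisely when
$$c(\ul{M}_1\ot\ul{M}_2)=c(\ul{M}_1)\cdot{}^{\ul{M}_1}c(\ul{M}_2)$$
for all $H$-stable $\ul{M}_1,\ul{M}_2$, where ${}^{\ul{M}_1}(-)$ denotes the action $[\sigma]\mapsto[\xi_{M_1}\circ\sigma]$. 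That this is a genuine left action, and that $\xi_{M_1}\circ\sigma$ is again a cocycle, is exactly the content of \prref{7.3} and \prref{7.4}; note also that $\xi_{M_1\ot_B M_2}=\xi_{M_1}\circ\xi_{M_2}$, which is consistent with the left-action convention. Thus it suffices to establish, for suitably normalized choices, the equality of $2$-cochains
$$\sigma_{M_1\ot_B M_2}(h\ot k)=\sigma_{M_1}(h\ot k)\,\xi_{M_1}\bigl(\sigma_{M_2}(h\ot k)\bigr),$$
the product being taken in the commutative algebra $Z(B)$ (equality up to a coboundary would already suffice, since $g_3$ takes values in $H^2$).

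First I would fix normalized isomorphisms $\psi_{M_1},\psi_{M_2}$, i.e. $\psi_{M_i}(m\ot1)=1_{A^{\sq e}}\ot_{B^e}m$, together with the induced $t_{M_i}=\alpha_{\bf 12}^{-1}(\psi_{M_i})\in\Hom^H(H,E)$ and their convolution inverses $u_{M_i}$, so that each $\sigma_{M_i}$ is given by the recipe of \prref{4.4}. The construction of $\psi_{M_1\ot_B M_2}$ through the commutative diagram \equref{5.9.1} is canonical in $\psi_{M_1},\psi_{M_2}$, and evaluating on $m_1\ot_B m_2\ot1$ shows it is again normalized; hence $t_{M_1\ot_B M_2}(1)=1$ and \prref{4.4} applies to it as well. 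The main tool is then the explicit formula for $t_{M_1\ot_B M_2}$ in terms of $t_{M_1}$ and $t_{M_2}$ obtained just before \equref{5.9.2}, in the $m(h)^{+},m(h)^{-},m(h)^{0}$ notation. Substituting it into
$$\sigma_{M_1\ot_B M_2}(h\ot k)=t_{M_1\ot_B M_2}(h_{(1)})\,t_{M_1\ot_B M_2}(k_{(1)})\,u_{M_1\ot_B M_2}(h_{(2)}k_{(2)})$$
and evaluating at $1_{A^{\sq e}}\ot_{B^e}(m_1\ot_B m_2)$, the composition factors into an $M_1$-part and an $M_2$-part; using cocommutativity of $H$ to realign the Sweedler legs, the $M_1$-part assembles into $\sigma_{M_1}(h\ot k)$ acting in the $M_1$-slot and the $M_2$-part into $\sigma_{M_2}(h\ot k)$ acting in the $M_2$-slot.

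The remaining point is to rewrite the $M_2$-slot contribution as a single central element acting on all of $M_1\ot_B M_2$. Since $\sigma_{M_2}(h\ot k)\in Z(B)\subset B$ and the tensor is over $B$, I can slide it across, $m_1\ot_B\sigma_{M_2}(h\ot k)\,m_2=m_1\,\sigma_{M_2}(h\ot k)\ot_B m_2$, and then \equref{7.1.0} (equivalently \leref{7.1}) gives $m_1\,\sigma_{M_2}(h\ot k)=\xi_{M_1}(\sigma_{M_2}(h\ot k))\,m_1$. Under the identification $\lambda$ of $Z(B)$ with $E^{{\rm co}H}$ from \equref{6.1.1}, this says precisely that the $M_2$-part contributes $\xi_{M_1}\bigl(\sigma_{M_2}(h\ot k)\bigr)$ while the $M_1$-part contributes $\sigma_{M_1}(h\ot k)$. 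As $Z(B)$ is commutative these combine to the displayed cochain identity, and passing to cohomology classes yields $g_3(\ul{M}_1\ot\ul{M}_2)=g_3(\ul{M}_1)\cdot{}^{\ul{M}_1}g_3(\ul{M}_2)$, that is, $g_3\in Z^1(\Pic(B)^H,H^2(H,Z(B)))$.

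I expect the main obstacle to be the bookkeeping in the middle step: correctly tracking the $+/-/0$ legs and the several Sweedler components of $h$ and $k$ through the opposite-algebra compositions in $E$ for $M_1\ot_B M_2$, and invoking cocommutativity of $H$ together with commutativity of $Z(B)$ at the right moments so that the cross terms cancel and the expression separates cleanly into the $M_1$- and $M_2$-parts. A secondary technical check is that this factorization survives the passage to the coinvariants $E^{{\rm co}H}\cong Z(B)$; this is exactly where \leref{7.1} (the $A^{\sq e}\ot_{B^e}M$-level version of $mx=\xi_M(x)m$) does the essential work, and it is also the precise reason why it is $\xi_{M_1}$, and not $\xi_{M_2}$ or $\xi_{M_1\ot_B M_2}$, that twists the second cocycle.
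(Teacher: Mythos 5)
Your proposal follows essentially the same route as the paper's proof: normalized choices of $\psi_{M_i}$ and $t_{M_i}=\alpha_{\bf 12}^{-1}(\psi_{M_i})$, the explicit formula for $t_{M_1\ot_B M_2}$ preceding \equref{5.9.2} in the $m(h)^+,m(h)^-,m(h)^0$ notation, splitting the evaluation via the isomorphism $g$ of \leref{5.7}, and then \leref{7.1} to convert the $M_2$-slot contribution into $\xi_{M_1}(\cdot)$ acting in the $M_1$-slot --- this is exactly how the paper proceeds. There is, however, one misstatement you should repair: the cochain-level identity you set out to verify is not the right one. The group law on $Z^2(H,Z(B))$, and hence the multiplication on $H^2(H,Z(B))$, is the \emph{convolution} product, so the $1$-cocycle condition at the level of representing cocycles reads
$$\sigma_{M_1\ot_B M_2}(h\ot k)=\sigma_{M_1}(h_{(1)}\ot k_{(1)})\,\xi_{M_1}\bigl(\sigma_{M_2}(h_{(2)}\ot k_{(2)})\bigr),$$
that is, $\sigma_{M_1\ot_B M_2}=\sigma_{M_1}*(\xi_{M_1}\circ\sigma_{M_2})$, and \emph{not} the pointwise product $\sigma_{M_1}(h\ot k)\,\xi_{M_1}(\sigma_{M_2}(h\ot k))$ that you wrote. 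The Sweedler legs do not ``assemble'' into a single evaluation at $h\ot k$: in the computation the defining relation $t_{M_i}(k)\circ t_{M_i}(h)=\sigma_{M_i}(h_{(1)}\ot k_{(1)})\,t_{M_i}(h_{(2)}k_{(2)})$ is applied once in each tensor slot, and after sliding the second cocycle value across the $B$-tensor via \leref{7.1} the two evaluations remain on distinct Sweedler components; cocommutativity of $H$ only permits reindexing them, not collapsing them. The pointwise equality is false in general, so that intermediate step as literally stated would fail. Fortunately this does not damage your argument: the convolution identity is precisely what your substitute-evaluate-split computation produces (it is the paper's conclusion $\sigma_{M\ot_B M'}=\sigma_M*(\xi_M\circ\sigma_{M'})$), and since the product on $H^2(H,Z(B))$ is induced by $*$, this convolution identity \emph{is} the cocycle condition $g_3(\ul{M}_1\ot\ul{M}_2)=g_3(\ul{M}_1)\cdot{}^{\ul{M}_1}g_3(\ul{M}_2)$. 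With that single correction your proof coincides with the paper's.
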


\begin{proof}
Let $[\ul{M}], [\ul{M}']\in \Pic(B)^H$, and consider the
corresponding total integrals \[t_M:\ H\to E:={}_{A^{\sq
e}}\END(A^{\sq e}\ot_{B^e} M),\quad t_{M'}:\ H\to E'.\] We recall
from \seref{4} that $[\sigma_M]=g_3[\ul{M}]$ is defined by the
formula
$$t_M(k)\circ t_M(h)=\sigma_M(h_{(1)}\ot k_{(1)})t_M(h_{(2)}k_{(2)}).$$
This means that
\begin{eqnarray*}
&&\hspace*{-2cm}
(t_M(k)\circ t_M(h))(1_{A^{\sq e}}\ot_{B^e} m)\\
&=& t_M(k)((m(h)^+\ot m(h)^-)\ot_{B^e} m(h)^0)\\
&=& (m(h)^+m(h)^0(k)^+\ot m(h)^0(k)^-m(h)^-)\ot_{B^e} m(h)^0(k)^0
\end{eqnarray*}
equals
\begin{eqnarray*}
&&\hspace*{-1cm}
\sigma_M(h_{(1)}\ot k_{(1)})t_M(h_{(2)}k_{(2)}) (1_{A^{\sq e}}\ot_{B^e} m)\\
&=& (\sigma_M(h_{(1)}\ot k_{(1)}) m(h_{(2)}k_{(2)})^+ \ot m(h_{(2)}k_{(2)})^-)
\ot_{B^e}m(h_{(2)}k_{(2)})^0.
\end{eqnarray*}
Then we compute
\begin{eqnarray*}
&&\hspace*{-2cm}
\bigl(t_{M\ot_BM'}(k)\circ t_{M\ot_BM'}(h)\bigr)
(1_{A^{\sq e}}\ot_B (m\ot_B m'))\\
&\equal{\equref{5.9.2}}&
t_{M\ot_BM'}(k)\bigl(
(m(h_{(1)})^+\ot m'(h_{(2)})^-)\\
&&\hspace*{15mm}\ot_{B^e} (m(h_{(1)})^0m(h_{(1)})^-m'(h_{(2)})^+\ot_B
m'(h_{(2)})^0)\bigr)\\
&\equal{\equref{5.9.2}}&
\bigl(m(h_{(1)})^+m(h_{(1)})^0(k_{(1)})^+\ot m'(h_{(2)})^0(k_{(2)})^-m'(h_{(2)})^-\bigr)\\
&&\hspace*{15mm}\ot_{B^e}
\bigl(m(h_{(1)})^0(k_{(1)})^0m(h_{(1)})^0(k_{(1)})^- m(h_{(1)})^-\\
&&\hspace*{15mm} m'(h_{(2)})^+m'(h_{(2)})^0(k_{(2)})^+\ot_B
m'(h_{(2)})^0(k_{(2)})^0\bigr),
\end{eqnarray*}
hence
\begin{align*}
g&\Bigl(\bigl(t_{M\ot_BM'}(k)\circ t_{M\ot_BM'}(h)\bigr)
(1_{A^{\sq e}}\ot_B (m\ot_B m'))\Bigr)\\
&=
\bigl((m(h_{(1)})^+m(h_{(1)})^0(k_{(1)})^+\ot m(h_{(1)})^0(k_{(1)})^- m(h_{(1)})^-)\\
&\hspace*{15mm}
\ot_{B^e} m(h_{(1)})^0(k_{(1)})^0\bigr)\\
&\hspace*{15mm} \ot_B
\bigl((m'(h_{(2)})^+m'(h_{(2)})^0(k_{(2)})^+\ot m'(h_{(2)})^0(k_{(2)})^-m'(h_{(2)})^-)\\
&\hspace*{15mm}
\ot_{B^e} m'(h_{(2)})^0(k_{(2)})^0\bigr)\\
&= \bigl((\sigma(h_{(1)}\ot k_{(1)}) m(h_{(2)}k_{(2)})^+\ot
m(h_{(2)}k_{(2)})^-)
\ot_{B^e} m(h_{(2)}k_{(2)})^0\bigr)\\
&  \hspace*{15mm} \ot_B \bigl((\sigma'(h_{(3)}\ot k_{(3)})
m'(h_{(4)}k_{(4)})^+\ot m'(h_{(4)}k_{(4)})^-) \\
& \hspace*{15mm} \ot_{B^e} m'(h_{(4)}k_{(4)})^0\bigr)\\
&\equal{\equref{7.1.1}} \bigl((\sigma(h_{(1)}\ot k_{(1)})
\xi_M(\sigma'(h_{(2)}\ot k_{(2)}))
m(h_{(3)}k_{(3)})^+\ot m(h_{(3)}k_{(3)})^-)\\
& \hspace*{15mm} \ot_{B^e}m(h_{(3)}k_{(3)})^0\bigr) \ot_B \bigl((
m'(h_{(4)}k_{(4)})^+\ot m'(h_{(4)}k_{(4)})^-) \\
 &\hspace*{15mm} \ot_{B^e} m'(h_{(4)}k_{(4)})^0\bigr)\\
&=\sigma(h_{(1)}\ot k_{(1)}) \xi_M(\sigma'(h_{(2)}\ot k_{(2)}))
g\bigl( t_{M\ot_BM'}(hk)(1_{A^{\sq e}}\ot_B (m\ot_B m'))\bigr).
\end{align*}
This shows that
$$t_{M\ot_BM'}(k)\circ t_{M\ot_BM'}(h)=
\sigma(h_{(1)}\ot k_{(1)}) \xi_M(\sigma'(h_{(2)}\ot
k_{(2)}))t_{M\ot_BM'}(hk).$$ Consequently,
$$\sigma_{M\ot_B M'}=\sigma_M*(\xi_M\circ \sigma_{M'}),$$ which
proves the Theorem.
\end{proof}

\section{Galois objects over noncocommutative Hopf algebras}\selabel{9}
Let $H$ be a (possibly non-cocommutative) Hopf algebra with bijective
antipode, and $A$ an $H$-Galois extension of $B=A^{{\rm co}H}$.
We can still define the Picard groups $\Pic^H(A)$, $\Pic(B)$ and
$\Pic^{\sq_H}(B)$, and we still have that $\Pic^H(A)\cong \Pic^{\sq_H}(B)$,
cf. \seref{5}. We can therefore ask whether the exact sequence
from \thref{6.6} can be generalized to non-cocommutative Hopf algebras.
The obstructions are the following.
\begin{enumerate}
\item We need the property that $A\sq_H A^{\rm op}$ is an $H$-Galois extension
(see \thref{2.5} and \prref{2.7}) in order to apply \coref{4.13} (with
$H$ replaced by $A\sq_H A^{\rm op}$);
\item We used the fact that $H$ is cocommutative when we defined the
$H$-stable part of $\Pic(B)$ (see \seref{5b});
\item We want to have a group structure on $\Omega_{A\sq_H A^{\rm op}}$.
\end{enumerate}
These problems can be fixed in the case where the algebra of coinvariants $B$
coincides with the groundfield $k$, that is, when $A$ is a Galois object.
Examples of Galois objects are for example classical Galois field extensions
(then $H=(kG)^*$, with $G$ a finite group); other examples of Galois objects
over noncocommutative algebras have been studied in \cite{A1,A2}.\\
In this case, $\Omega_{A\sq_H A^{\rm op}}\cong \Alg(H,k)$ is a group, by \prref{4.3b},
and problem 3) is fixed. To handle problem 1), we invoke the theory of
Hopf-Bigalois objects, as developed by Schauenburg \cite{Schauenburg}.
If $A$ is a right $H$-Galois object, then there exists another Hopf algebra
$L=L(A,H)$, unique up to isomorphism, such that $A$ is an $(L,H)$-Bigalois object,
that is, $A$ is left $L$-Galois object, a right $H$-Galois object, and an
$(L,H)$-bicomodule. For the construction of $L$, we refer to \cite[Sec. 3]{Schauenburg}.
If $H$ is cocommutative, then $L=H$. We can then introduce the Harrison groupoid
\cite[Sec. 4]{Schauenburg}. Objects are Hopf algebras with bijective antipode,
morphisms are Hopf-Bigalois objects, and the composition of morphisms is given
by the cotensor product. The inverse of a morphism $A$ between $L$ and $H$
(that is, an $(L,H)$-Bigalois object) is $A^{\rm op}$, with left $H$-coaction $\lambda$ given
by the formula $\lambda(a)=S^{-1}(a_{[1]})\ot a_{[0]}$. In particular,
$(A\sq_H A^{\rm op})$ is an $(L,L)$-Bigalois object, and, in particular, a right
$H$-Galois object. Applying \prref{4.3b} and \coref{4.13}, we obtain
$$\ol{\Omega}_{A\sq_H A^{\rm op}}\cong \Omega_{A\sq_H A^{\rm op}} \cong
\Alg(A\sq_H A^{\rm op},k)\cong \Alg(L,k).$$
The isomorphism $\Alg(A\sq_H A^{\rm op},k)\cong \Alg(L,k)$ can also be obtained
as follows. Since $A^{\rm op}$ is the inverse of $A$ in the Harrison groupoid,
we have that $A\sq_H A^{\rm op}\cong L$ as bicomodule algebras.\\
Since $\Pic(B)=1$ ($k$ is a field), the map $\Pic^{\sq_H}(B)\to \Pic(B)$ is trivial.
Its kernel is $\ol{\Omega}_{A\sq_H A^{\rm op}}$, so we obtain the following result.

\begin{proposition}\prlabel{9.1}
Let $H$ be a Hopf algebra with bijective antipode, $A$ a right $H$-Galois object,
and $L=L(A,H)$. Then $\Pic^H(A)\cong \Pic^{\sq_H}(k)\cong \Alg(L,k)$.
\end{proposition}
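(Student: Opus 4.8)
The plan is to exhibit the asserted isomorphism as a chain of identifications: reduce $\Pic^H(A)$ to a $\square_H$-Picard group over the ground field, recognise this group as the kernel of the forgetful map into the (trivial) Picard group of $k$, identify that kernel with a group of cleaving data $\ol{\Omega}$, and finally evaluate $\ol{\Omega}$ by means of Schauenburg's Hopf-Bigalois theory, which supplies in the non-cocommutative case the Galois property that \thref{2.5} and \prref{2.7} provided under cocommutativity.

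The reduction $\Pic^H(A)\cong\Pic^{\sq_H}(B)$ from \seref{5} is valid without any cocommutativity hypothesis; since $A$ is a Galois object we have $B=A^{{\rm co}H}=k$, so $B^e=k$ and $\Pic^H(A)\cong\Pic^{\sq_H}(k)$. It therefore suffices to prove $\Pic^{\sq_H}(k)\cong\Alg(L,k)$.

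Consider next the forgetful group homomorphism $g_2\colon\Pic^{\sq_H}(k)\to\Pic(k)$ of \seref{6}. Because $k$ is a field, $\Pic(k)=1$, so $g_2$ is trivial and $\Pic^{\sq_H}(k)=\Ker(g_2)$. Arguing exactly as in \prref{6.5}, an element of $\Ker(g_2)$ has both connecting modules isomorphic to $k$ as bimodules, hence by \thref{1.6}(2) is determined up to isomorphism by a left $A^{\sq e}$-module structure on $k$; by \coref{4.13}, applied to the comodule algebra $A^{\sq e}=A\sq_H A^{\rm op}$, these structures are in bijection with the elements of $\ol{\Omega}_E$, where $E={}_{A^{\sq e}}\END(A^{\sq e})^{\rm op}$. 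Thus $\Pic^{\sq_H}(k)\cong\ol{\Omega}_{A\sq_H A^{\rm op}}$, and only this last group remains to be computed.

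The step on which everything turns, and where cocommutativity is replaced, is that $A\sq_H A^{\rm op}$ is a right $H$-Galois object, for \coref{4.13} and \exref{4.14} require this. I would deduce it from Schauenburg's construction: letting $L=L(A,H)$ be the Hopf algebra for which $A$ is an $(L,H)$-Bigalois object, $A^{\rm op}$ is the inverse of $A$ in the Harrison groupoid, so $A\sq_H A^{\rm op}\cong L$ as $(L,L)$-bicomodule algebras and is in particular a right $H$-Galois object with coinvariants $k$. Granting this, \exref{4.14} (with $M=k$) gives $E\cong A\sq_H A^{\rm op}$ and $\ol{\Omega}_{A\sq_H A^{\rm op}}\cong\Alg(A\sq_H A^{\rm op},k)$; since every unit of $k$ is central, the equivalence relation defining $\ol{\Omega}$ collapses and this coincides with $\Omega_{A\sq_H A^{\rm op}}$. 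Transporting $\Alg(A\sq_H A^{\rm op},k)$ along the algebra isomorphism $A\sq_H A^{\rm op}\cong L$ produces $\Alg(L,k)$, and concatenating all the isomorphisms yields $\Pic^H(A)\cong\Pic^{\sq_H}(k)\cong\Alg(L,k)$. I expect the main obstacle to be precisely the Galois property of $A\sq_H A^{\rm op}$: the cocommutative tools \thref{2.5} and \prref{2.7} are unavailable here, and its resolution hinges on the groupoid identification $A\sq_H A^{\rm op}\cong L$, whose compatibility with the $H$-comodule-algebra structure one must verify before \exref{4.14} can be invoked.
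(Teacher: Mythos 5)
Your proposal is correct and follows essentially the same route as the paper: the reduction $\Pic^H(A)\cong\Pic^{\sq_H}(k)$, the identification of this group with $\Ker(g_2)=\ol{\Omega}_{A\sq_H A^{\rm op}}$ via \coref{4.13} as in \prref{6.5}, and the Harrison-groupoid identification $A\sq_H A^{\rm op}\cong L$ to obtain $\ol{\Omega}_{A\sq_H A^{\rm op}}\cong\Omega_{A\sq_H A^{\rm op}}\cong\Alg(A\sq_H A^{\rm op},k)\cong\Alg(L,k)$. (Your phrase ``right $H$-Galois object'' for $A\sq_H A^{\rm op}$ should read ``right $L$-Galois object''---a slip the paper's own text also makes---but since \coref{4.13} and \exref{4.14} hold over an arbitrary Hopf algebra, applying them over $L$ is precisely what both arguments do.)
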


If $H$ is cocommutative, then $L=H$, so $\Pic^H(A)\cong \Alg(H,k)$. This isomorphism
can be described explicitely. The isomorphism $\Alg(H,k)\to \Alg(A\sq_H A^{\rm op}, k)$
is a particular case of \equref{6.5.1}. For an algebra morphism $\alpha:\ H\to k$, the
corresponding $\phi_\alpha:\ A\sq_H A^{\rm op}\to k$ is given by
$$\phi_\alpha(\sum_j a_j\ot a'_j)=\sum_j a_ja'_{j[0]}\alpha(a'_{j[1]}),$$
and the corresponding $A\sq_H A^{\rm op}$-action on $k$ is induced by $\alpha$.\\
Let us now compute the corresponding $A$-bimodule structure on $A$.
It is shown in \cite[Prop. 2.3]{CCMT} that we have a right $H$-colinear isomorphism
$$f:\ A\ot (A\sq_H A^{\rm op})\to A\ot A^{\rm op},~~
f\bigl(a\ot (\sum_j a_j\ot a'_j)\bigr)= \sum_j aa_j\ot a'_j.$$
The inverse of $f$ is given by the formula
$$f^{-1}(a\ot a')=\sum_i l_i(S(a'_{[1]}))\ot r_i(S(a'_{[1]}))\ot a'_{[0]}.$$
For $N\in {}_{A\sq_HA^{\rm op}}\Mm$, we have an isomorphism
$$g:\ A\ot N\mapright{\psi} A\ot (A\sq_H A^{\rm op}) \ot_{A\sq_H A^{\rm op}} N
\mapright{f\ot N} (A\sq_HA^{\rm op})\ot_{A\sq_H A^{\rm op}} N.$$
Here $\psi$ is the natural isomorphism. The $A$-bimodule structure on $A\ot N$ is
obtained by transporting the $A$-bimodule structure on
$(A\sq_HA^{\rm op})\ot_{A\sq_H A^{\rm op}} N$ to $A\ot N$ using $g$. Take
$a,a',a''\in A$ and $n\in N$. Then
$$a'g(a\ot n)a''=a'((a\ot 1)\ot_{A\sq_H A^{\rm op}} n)a''=
(a'a\ot a'')\ot_{A\sq_H A^{\rm op}} n.$$
Now
\begin{eqnarray*}
&&\hspace*{-1cm}
a'\cdot (a\ot n)\cdot a''= g^{-1}\bigl(a'g(a\ot n)a''\bigr)
= \psi^{-1}\bigl( f^{-1}(a'a\ot a'')\ot_{A\sq_H A^{\rm op}} n\bigr)\\
&=&
\sum_i a'al_i(S(a''_{[1]}))\ot \bigl( r_i(S(a''_{[1]}))\ot a''_{[0]}\bigr)n \in A\ot N.
\end{eqnarray*}
Now let $N=k$, with left $A\sq_H A^{\rm op}$-action given by $\phi_\alpha$,
and identify $A\ot N\cong A$ using the natural isomorphism. The corresponding
$A$-bimodule structure on $A\ot N\cong A$ is then given by the formula
\begin{eqnarray*}
&&\hspace*{-1cm}
a'\cdot a \cdot a''=
\sum_i a'al_i(S(a''_{[1]})) \phi_\alpha\bigl( r_i(S(a''_{[1]}))\ot a''_{[0]}\bigr)\\
&=&
\sum_i a'al_i(S(a''_{[2]})) r_i(S(a''_{[2]})) a''_{[0]}\alpha(a''_{[1]})~
\equal{\equref{1.2.5}}~
a'aa''_{[0]}\alpha(a''_{[1]}).
\end{eqnarray*}
We conclude that the $(A\ot A^{\rm op},H)$-Hopf module $P$ representing the
element in $\Pic^H(A)$ corresponding to $\alpha$ is equal to $A$ as a left
$A$-module and a right $H$-comodule, and with right $A$-module action
given by the formula
\begin{equation}\eqlabel{9.1.1}
a\cdot a'=aa'_{[0]}\alpha(a'_{[1]}).
\end{equation}

\begin{example}\exlabel{9.2}
Let $q=p^d$, and $k$ a field of characteristic $p$. Consider the Hopf algebra
$H=k[x]/(x^q-x)$, with $x$ primitive and $S(x)=-x$. If $d=1$, then $H$ is the
dual of the group algebra over the cyclic group of order $p$.
The $H$-Galois are known,
see for example \cite[Sec. 11.3]{Caenepeel} for detail. More precisely, the group of
Galois objects $\Gal(k,H)\cong k/\{a^q-a~|~a\in k\}$. The Galois object corresponding
to $a\in k$ is the Artin-Schreier extension
$$S=k[y]/(y^q-y-a)$$
with coaction $\rho_S(y)= y\ot 1+1\ot x$. Furthermore
$$\Alg(H,k)\cong \{b\in k~|~b^q=b\}.$$
The algebra morphism $\alpha$ corresponding to $b\in k$ is determined by
the formula $\alpha(x)=b$. Now fix $a\in k$, and consider $S=k[y]/(y^q-y-a)$.
It follows from \prref{9.1} that
$$\Pic^H(S)\cong \{b\in k~|~b^q=b\}.$$
The $(S\ot S^{\rm op},H)$-Hopf module $P$ representing the element of
$\Pic^H(S)$ corresponding to $b$ satisfying $b^q=b$ is equal to $S$ as a left
$S$-module and a right $H$-comodule. The right $S$-action on $P$ is
completely determined by the right action of $y$ on $p\in P=S$. Since
$y_{[0]}\alpha(y_{[1]})=y+b$, formula \equref{9.1.1} takes the form
$$p\cdot y= p(y+b).$$
\end{example}

\begin{example}\exlabel{9.3}
We keep the notation of \exref{9.2}. Let $B$ be a $k$-algebra, and
$A= B\ot S$, $\rho_A=B\ot \rho:\ B\ot S\to B\ot S\ot H$. Then
$$\can_A=B\ot \can_S:\ A\ot_B A=(B\ot S)\ot_B (B\ot S)\cong B\ot S\ot S
\to B\ot S\ot H=A\ot H$$
is an isomorphism, hence $A$ is an $H$-Galois extension of $B$.\\
We claim that the Miyashita-Ulbrich action on $Z(B)$ is trivial. Let 
$\gamma_S(h)=\sum_i l_i(h)\ot r_i(h)\in S\ot S$, for all $h\in H$. It is
easy to see that
$$\can_A(\sum_i 1_B\ot l_i(h)\ot r_i(h))=1_B\ot 1_S\ot h=1_A\ot h,$$
hence
$$\gamma_A(h)=\sum_i (1_B\ot l_i(h))\ot_B (1_B\ot r_i(h)),$$
and, for $x\in Z(B)\cong Z(B)\ot k$,
$$h\cdot x=\sum_i (1_B\ot l_i(h))(1_B\ot 1_k) (1_B\ot r_i(h))=\varepsilon(h)x.$$
Now it follows that
$$H^1(H,Z(B))\cong \Alg(H,B)=\{b\in B~|~b^q=b\}.$$
Our next aim is to show that every element of $\Pic(B)$ is $H$-stable.
First observe that $A^{\rm op}=B^{\rm op}\ot S^{\rm op}$, with $S^{\rm op}=S$
as an algebra, and with $H$-coaction given by
$\rho(y)=y\ot 1-1\ot x$. Then
$$A\sq_H A^{\rm op}=B\ot B^{\rm op}\ot (S\sq_H S^{\rm op})=B^e\ot S^{\sq e}.$$
Now let $M\in \dul{\Pic}(B)$. Then $A^{\sq e}\ot_{B^e} M= M\ot S^{\sq e}
\cong M\ot H$, since $S^{\sq e}\cong H$. This shows that $M$ is $H$-stable,
and it follows that $\Pic(B)=\Pic(B)^H$. The exact sequence from \thref{6.6}
specializes to
$$1\to \{b\in B~|~b^q=b\}\to \Pic^H(A)\to \Pic(B)\to H^2(H,Z(B)).$$
\end{example}

Before we present our final \exref{9.4}, we make the following observation.
Suppose that $H$ is a finite dimensional commutative Hopf algebra. Then $H^*$
is a cocommutative Hopf algebra. If $A$ is an $H^*$-Galois object, then
$A$ is an $H$-module algebra, with left $H$-action $h(a)=\lan a_{[1]},h\ran a_{[0]}$.
Furthermore $\Alg(H^*,k)=G(H)$, the group of grouplike elements of $H$.
Take $g\in G(H)$; \equref{9.1.1} can then be rewritten as
\begin{equation}\eqlabel{9.4.1}
a\cdot a'=ag(a').
\end{equation}

\begin{example}\exlabel{9.4}
In \cite{HP}, forms of the cyclic group algebra have been studied.
One of the examples is the following quotient of the trigonometric Hopf algebra
over $\QQ$:
$$H=\QQ[c,s]/(c^2+s^2-1,sc).$$
$H$ is a form of the group algebra over the cyclic group of order 4, that is,
$H\ot_{\QQ}\CC\cong \CC C_4$. The grouplike elements of
$H\ot_{\QQ}\CC= \CC[c,s]/(c^2+s^2-1,sc)$ are
$g_i=(c+is)^i$, $i=0,\cdots, 3$. It is easy to see that
$g_1,\, g_3\not\in H$ and $g_0=1,\, g_2=c^2-s^2\in H$, hence
$$G(H)=\{1,g_2=c^2-s^2\}.$$
An example of an $H^*$-Galois object is given in \cite[Remark p. 135]{HP}:
$A=\QQ(\mu)$, with $\mu=\sqrt[4]{2}$, and $H$-action given by the formulas
$$\begin{array}{cccc}
c(1)=1&c(\mu)=0&c(\mu^2)=-\mu^2&c(\mu^3)=0\\
s(1)=0&s(\mu)=-\mu&s(\mu^2)=0&s(\mu^3)=\mu^3
\end{array}$$
Since $G(H)=\{1,g_2\}$, it follows from \prref{9.1} that $\Pic^H(A)$ is the
cyclic group of order 2. Using \equref{9.4.1}, we can describe its nontrivial
element $[P]$. First observe that the action of $g_2$ on $A$ is given by the formula
$g_2(\mu^i)=(-1)^i\mu$. Then $P=A$ as a left $A$-module and a left $H$-module,
with right $A$-action given by
$$a\cdot \mu^i=(-1)^i \mu^i a.$$

\end{example}

\end{document}